\newtheorem{theorem}{Theorem}[section]
\newtheorem{lemma}[theorem]{Lemma}
\newtheorem{corollary}[theorem]{Corollary}
\newtheorem{proposition} [theorem]{Proposition}
\newtheorem{remark}[theorem]{Remark}
\title{Broadcasting induced colourings of random recursive trees and preferential attachment trees}
\author[1]{Colin Desmarais\thanks{Supported by the Swedish Research Council, the Knut and Alice Wallenberg Foundation, and the Swedish Foundation's starting grant from the Ragnar S\"oderberg Foundation.}}
\author[1]{Cecilia Holmgren$^*$} 
\author[1,2]{Stephan Wagner\thanks{Supported by the Knut and Alice Wallenberg Foundation.}}
\affil[1]{\small Department of Mathematics, Uppsala University, Uppsala, Sweden\\ \url{{colin.desmarais, cecilia.holmgren, stephan.wagner}@math.uu.se}}
\affil[2]{\small Department of Mathematical Sciences, Stellenbosch University, Stellenbosch, South Africa}
\begin{document}

\maketitle

\begin{abstract}
In this work we consider random two-colourings of random linear preferential attachment trees, which includes random recursive trees, random plane-oriented recursive trees, random binary search trees, and a class of random $d$-ary trees. The random colouring is defined by assigning the root of the tree the colour red or blue with equal probability, and all other vertices are assigned the colour of their parent with probability $p$ and the other colour otherwise. These colourings have been previously studied in other contexts, including Ising models and broadcasting, and can be considered as generalizations of bond percolation. With the help of P\'olya urns, we prove limiting distributions, after proper rescalings, for the number of vertices of each colour, the number of monochromatic subtrees of each colour, as well as the number of leaves and fringe subtrees with two-colourings. Using methods from analytic combinatorics, we also provide precise descriptions of the limiting distribution after proper rescaling of the size of the root cluster; the largest monochromatic subtree containing the root. The description of the limiting distributions
extends previous work on bond percolation in random preferential attachment trees. \\

\noindent {\bf Keywords:} Random trees, preferential attachment, broadcasting, percolation, P\'olya urns, generating functions.
\end{abstract}

%--------------------------------------------%
%---------- INTRODUCTION ----------%
%--------------------------------------------%
\section{Introduction}
\normalsize

For a rooted tree $T = (V,E)$ with root $\rho$ and $p \in [0,1]$, we define a {\em broadcasting induced colouring} $\sigma_{T,p}$ of $T$ to be a random 2-colouring $\sigma_{T,p} : V(T) \rightarrow \{ red, blue \}$ of the vertices of $T$ such that 
\pagebreak
\begin{itemize}
\item[(i)] $\mathbb{P}(\sigma_{T,p}(\rho) = \text{red}) = \mathbb{P}(\sigma_{T,p}(\rho)= \text{blue}) = \frac{1}{2}$ and
\item[(ii)] for all vertices $v$ with parent $u$, $\mathbb{P}(\sigma_{T,p}(v)=\sigma_{T,p}(u)) = p$ and $\mathbb{P}(\sigma_{T,p}(v) \neq \sigma_{T,p}(u)) = 1-p$.
\end{itemize}

The colouring $\sigma_{T,p}$ is induced from a broadcast process, in which the root of $T$ is assigned a bit 0 or 1 uniformly at random, and this bit is propagated along the tree in the following way: any vertex in the tree takes the same bit as its parent with probability $p$ and the other bit with probability $1-p$. By assigning a vertex the colour red if its bit value is 0 and blue if its bit value is 1, we recover the colouring $\sigma_{T,p}$. This broadcast process was described by Evans, Kenyon, Peres, and Schulman \cite{EKPS:00}, where they outline a correspondence of this process to the Ising model (see \cite[Section 2.2]{EKPS:00}). The reconstruction problem is then to reconstruct the bit value of the root $\rho$ from the bit values of some subset of vertices in $T$ after broadcasting. This problem has long been studied, see for example the survey \cite{MOSS:04} of early works. Applications of the reconstruction problem in trees include its connection to stochastic block models \cite{MONS:16, GULM:18}, a random graph model with applications in machine learning. Of particular interest to this work, Addario-Berry, Devroye, Lugosi, and Velona studied the reconstruction problem in random recursive trees and preferential attachment trees \cite{ADLV:20}. 

For a real number $\alpha$, a random (linear) preferential attachment tree $\mathcal{T}_{\alpha,n}$ is grown recursively in the following manner. The tree $\mathcal{T}_{\alpha,1}$ consists of a single vertex $\rho$, the root of all trees that follow. The tree $\mathcal{T}_{\alpha,n}$ is grown from $\mathcal{T}_{\alpha,n-1}$ by choosing a vertex $v$ at random and adding a child to $v$, where $v$ is chosen with probability 
\begin{equation}\label{eq:prefprob}
\frac{\alpha\deg^+(v) + 1 }{\sum_{u \in V(\mathcal{T}_{\alpha,n-1})} (\alpha \deg^+(u) + 1)}
\end{equation}
where $\deg^+(u)$ (called the {\em outdegree} of $u$) is the number of children of $u$. To avoid degenerate cases, we only allow $\alpha \in \{\ldots, -\frac{1}{4}, -\frac{1}{3}, -\frac{1}{2}\} \,\bigcup \,[0,\infty)$. If $\alpha = -1$, then only leaves can be chosen as the parent of a new vertex, resulting in $\mathcal{T}_{-1,n}$ simply being a path of length $n$. If $\alpha$ is a different negative number outside of $\{\ldots, -\frac{1}{4}, -\frac{1}{3}, -\frac{1}{2}\}$, then there may be vertices $v$ in $\mathcal{T}_{\alpha,n}$ for which \eqref{eq:prefprob} is negative. This problem is avoided when $\alpha = -\frac{1}{d}$, since \eqref{eq:prefprob} is positive when $\deg^+(u) < d$ and is zero when $\deg^+(u) = d$, resulting in a tree $\mathcal{T}_{-1/d, n}$ whose vertices all have outdegree less than or equal to $d$. 

The random tree $\mathcal{T}_{\alpha,n}$ has several names in the literature. When $\alpha = 0$, the vertex $v$ is chosen uniformly at random amongst all the vertices in the 
tree. This random tree is called a {\em random recursive tree}, and has been extensively studied 
for many years; since at least 1967 \cite{TAMY:67}. When $\alpha =1$, the tree $\mathcal{T}_n$ is called a 
{\em random plane-oriented recursive tree} which was introduced by Szyma\'nski \cite{SZYM:87}. The more general {\em linear preferential attachment tree} $\mathcal{T}_{\alpha,n}$ coincides with a special case of the preferential attachment model studied by Barab\'asi and Albert \cite{BAAL:99}, but has also been studied in several other contexts (see for example \cite{PITT:94,BRTS:01,HOJS:17}). 
When $\alpha = -\frac{1}{d}$ for a positive integer $d$, the tree $\mathcal{T}_{-1/d, n}$ is a model of random $d$-ary trees, and corresponds to a random binary search tree when $d=2$. The random trees $\mathcal{T}_{\alpha,n}$ also fall into the class of {\em increasing trees} (see \cite{BEFS:92,DRMO:09}), so named since if we label the vertices $1, \ldots,n$ by the time they appear in the tree, then the labels increase along all paths from the root.

For ease of notation, we may sometimes fix $\alpha$ and $p$, and let $\mathcal{T}_n$ denote the tree $\mathcal{T}_{\alpha, n}$ and let $\sigma_n$ denote the random broadcasting induced colouring $\sigma_{\mathcal{T}_n, p}$. For fixed $\alpha$ and $p$ we can consider a random sequence $((\mathcal{T}_n, \sigma_n))_{n=1}^\infty$ of preferential attachment trees with broadcasting induced colourings where $\mathcal{T}_n$ is grown from $\mathcal{T}_{n-1}$ in the manner outlined above, and where $\sigma_n$ restricted to the $n-1$ vertices of $\mathcal{T}_{n-1}$ is equal to $\sigma_{n-1}$ (and the colour of the newest vertex $v$ in $\mathcal{T}_n$ is randomly chosen such that with probability $p$ the colour of $v$ is the same as its parent). 

As an example of the growth process we describe, consider the trees with broadcasting induced colourings in Figure \ref{fig:example}. The tree $\mathcal{T}_8$ is grown from $\mathcal{T}_7$ by choosing $v$ according to the probability \eqref{eq:prefprob} and adding a child $u$ (notice that $\deg^+(v) = 0$). The probability that $u$ takes a different colour from $v$ is $1-p$, and so 

\[ \mathbb{P}((\mathcal{T}_8, \sigma_8) | (\mathcal{T}_7, \sigma_7)) = (1-p)\left(\frac{1}{6\alpha + 7}\right).\]

%-------- BEGIN EXAMPLE FIGURE ---------%
\begin{figure}[h!]
\centering
\begin{tikzpicture}[
redv/.style={circle, inner sep=0pt, draw=black, fill=red, thick, minimum size=8pt},
bluv/.style={rectangle, inner sep=0pt, draw=black, fill=cyan, thick, minimum size=8pt},
scale=1]

%EDGES
\draw (5,4) -- (4,3) -- (3,2) ;
\draw (5,4) -- (6,3) -- (7,2);
\draw (5,4) -- (5,3);
\draw (6,3) -- (5,2);
%VERTICES
\node[redv] at (5,4){};

\node[redv] at (4,3){};
\node[redv] at (5,3){};
\node[below right] at (5,3) {$v$};
\node[redv] at (6,3){};

\node[bluv] at (3,2){};
%\node[bluv] at (4,2){};
\node[redv] at (5,2){};

\node[bluv] at (7,2){};

\node at (5, 1){$(\mathcal{T}_{7}, \sigma_7)$};

\node at (8.5, 3){$\longrightarrow$};

%EDGES
\draw (12,4) -- (11,3) -- (10,2) ;
\draw (12,4) -- (13,3) -- (14,2);
\draw (12,4) -- (12,3) -- (11,2);
\draw (13,3) -- (12,2);

%VERTICES
\node[redv] at (12,4){};

\node[redv] at (11,3){};
\node[redv] at (12,3){};
\node[below right] at (12,3) {$v$};
\node[redv] at (13,3){};

\node[bluv] at (10,2){};
\node[bluv] at (11,2){};
\node[below right] at (11,2) {$u$};
\node[redv] at (12,2){};

\node[bluv] at (14,2){};

\node at (12, 1){$(\mathcal{T}_{8}, \sigma_8)$};

\end{tikzpicture}
\caption{A tree $\mathcal{T}_8$ with broadcasting induced colouring $\sigma_8$ grown from $(\mathcal{T}_7, \sigma_7)$. }
\label{fig:example}
\end{figure}
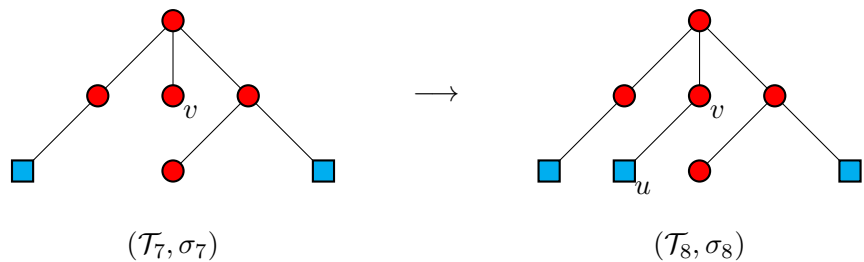
%-------- END EXAMPLE FIGURE ---------%

Our contribution in this work is to study asymptotic properties of the trees in the sequence $((\mathcal{T}_n, \sigma_n))_{n=1}^\infty$. These results are gathered in Section \ref{sec:results}, and come in two categories. In Section \ref{sec:resultsglobal} we list global properties of the trees $((\mathcal{T}_n, \sigma_n))_{n=1}^\infty$. These include limit laws (after appropriate rescaling) for the number of vertices of each colour, the number of clusters of each colour (maximal monochromatic subtrees of $\mathcal{T}_{n}$), the number of leaves of each colour, and the number of trees $T_1, \ldots, T_m$ with respective 2-colourings $\varsigma_1, \ldots, \varsigma_m$ appearing in the fringe. These results are proved using results on P\'olya urns from \cite{JANS:04}. The limiting distributions experience different phases. When $p < (3-\alpha)/4$, we observe normal limit laws after rescaling by $\sqrt{n}$. Normal limit laws are also observed when $p = (3-\alpha)/4$ but with a rescaling factor of $\sqrt{n \ln n}$, while convergence to a non-normal distribution is observed when $p > (3 - \alpha)/4$. 

In Section \ref{sec:resultsrootcluster} we study the size of the cluster $\mathcal{C}_n$ containing the root $\rho$. If we consider $\mathcal{T}_{n}$ with a random broadcasting induced colouring $\sigma_{n}$ and remove edges between two vertices if they do not have the same colour, we are left with a forest of trees corresponding to clusters after performing Bernoulli bond percolation with parameter $p$ on $\mathcal{T}_{n}$; Bernoulli bond percolation with parameter $p$ is a process by which each edge in a graph is kept with probability $p$ and removed with probability $1-p$, independently of every other edge. The size of $\mathcal{C}_n$ in the context of percolation, with a connection to memory-reinforced random walks, has previously been studied (see \cite{BAUR:20, BABE:15, KURS:16, BUSI:18}). In particular, Businger \cite{BUSI:18} has shown that for random recursive trees, $|\mathcal{C}_n|/n^p$ converges in distribution to a Mittag-Leffler distribution (see also \cite[Theorem 3.1]{BABE:15} and \cite{MOHL:15}). Baur \cite{BAUR:20} studied $|\mathcal{C}_n|$ in linear preferential attachment trees with $\alpha \geq 0$. He showed that $|\mathcal{C}_n|/n^{(p+\alpha)/(1+\alpha)}$ converges in distribution to some random variable $\mathcal{C}$, and provided the first two moments of this random variable. In this paper, we reprove these earlier results, and give a more precise description of this random variable $\mathcal{C}$ by providing a recursion to calculate the integer moments of $\mathcal{C}$. These results are proved using methods of analytic combinatorics. When $\alpha = 1$ we give a closed form for the integer moments of $\mathcal{C}$. We further extend these results by studying the size of $\mathcal{C}_n$ when $\alpha = -\frac{1}{d}$, where we observe different phases. When $p > \frac{1}{d}$, we observe a similar limiting distribution as when $\alpha > 0$, and find closed forms for the integer moments of this limiting distribution when $d=2$. When $p \leq \frac{1}{d}$, the size of the root cluster $\mathcal{C}_n$ is bounded almost surely as $n \to \infty$, and we describe the limiting distribution of $|\mathcal{C}_n|$ as the size of a Galton-Watson tree with binomial $\text{Bin}(d,p)$ offspring distribution.

%-------------------------------------------%
%---------- MAIN RESULTS ----------%
%-------------------------------------------%
\section{Main results}\label{sec:results}
In this section, we gather our main results. They are seperated in two categories: global properties, and the size of the root cluster. 

%---------- GLOBAL PROPERTIES ----------%
\subsection{Global properties}\label{sec:resultsglobal}

Throughout this section we define a random variable 
\begin{equation}\label{eq:rootcolour}
B = 
\begin{cases} 
1 & \text{if the root is red},\\
-1 & \text{if the root is blue}.
\end{cases}
\end{equation}
Then $B \sim (-1)^{\text{Be}(1/2)}$, where $\text{Be}$ is a Bernoulli random variable. 

We start with the number of vertices of each colour in a random preferential attachment tree $\mathcal{T}_n = \mathcal{T}_{\alpha,n}$ with a broadcasting induced colouring $\sigma_n = \sigma_{\mathcal{T}_n, p}$. 

\begin{theorem}\label{thm:vts}
Let $R_n$ and $B_n$ denote the number of red and blue vertices respectively in a preferential attachment tree $\mathcal{T}_{n}$ with broadcasting induced colouring $\sigma_{n}$. 
\begin{enumerate}
\item[(i)] The following strong law of large number holds
\begin{equation*}\label{eq:vtsa}
 \frac{1}{n}(R_n, B_n) \xrightarrow{a.s.} \left(\frac{1}{2}, \frac{1}{2}\right).
 \end{equation*}
\item[(ii)] If $p < (3- \alpha)/4$ or if $p=1/2$, then the following multivariate normal limit law holds
\begin{equation*}\label{vts<}
 \frac{(R_n, B_n) - n\left(\frac{1}{2}, \frac{1}{2}\right)}{\sqrt{n}} \xrightarrow{d} \mathcal{N}(\bm{0}, \Sigma_I) 
 \end{equation*}
where 
\begin{align*}
 \Sigma_I = c_{\alpha,p}
\left( \begin{array}{cc}
1 & -1 \\
-1 & 1 
\end{array}\right), 
&\hspace{10mm}c_{\alpha,p} = \begin{cases}
\frac{4\alpha p - \alpha - 1}{4(4p + \alpha - 3)} & p \neq \frac{1}{2}, \\
\frac{1}{4} & p = \frac{1}{2}.
\end{cases}
\end{align*}
\item[(iii)] If $p= (3-\alpha)/4$ and $p \neq \frac12$ (i.e., $\alpha \neq 1$), then the following multivariate normal limit law holds
\begin{equation*}\label{eq:vts=}
 \frac{(R_n, B_n) - n\left(\frac{1}{2}, \frac{1}{2}\right)}{\sqrt{n\ln{n}}} \xrightarrow{d} \mathcal{N}(\bm{0}, \Sigma_{II}) 
 \end{equation*}
where 
\[ \Sigma_{II} = \frac{(\alpha-1)^2}{4(1+\alpha)}
\left( \begin{array}{cc}
1 & -1 \\
-1 & 1 
\end{array}\right) .\]

\item[(iv)] If $p > (3-\alpha)/4$ and $p \neq \frac12$, then the following convergence in distribution holds
\begin{equation}\label{eq:vts>}
\frac{(R_n, B_n)- n\left(\frac{1}{2}, \frac{1}{2}\right)}{n^{(2p+\alpha - 1)/(1 + \alpha)}} \xrightarrow{d} \frac{B Z}{2(2p+\alpha - 1)}(2p-1,1-2p),
\end{equation}
where $Z$ is a random variable with 
\[ \mathbb{E}[Z] = \frac{\Gamma(1/(1 + \alpha))}{\Gamma((2p + \alpha)/(1 + \alpha))},\]
and 
\[\mathbb{E}[Z^2]  = \frac{\Gamma(1/(1 + \alpha))(1 + \alpha)(4p + \alpha-2)}{\Gamma((4p + 2\alpha -  1)/(1 + \alpha))(4p + \alpha - 3)}.\]
\end{enumerate}
\end{theorem}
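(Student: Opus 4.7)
The natural approach is to encode the joint evolution of the tree and its colouring as a Pólya urn and invoke the general limit theorems of \cite{JANS:04}. Consider the two-colour urn in which each vertex $v$ of colour $c\in\{\text{red},\text{blue}\}$ contributes $\alpha\deg^+(v)+1$ balls of colour $c$; write $X_n,Y_n$ for the totals of red and blue balls, so $W_n:=X_n+Y_n=(1+\alpha)n-\alpha$. Drawing a ball selects the parent of the next vertex according exactly to \eqref{eq:prefprob}, and if the drawn ball is red then $\alpha$ additional red balls are added (for the parent's increased outdegree) together with one new ball whose colour matches the parent's with probability $p$. The resulting replacement matrix is
\[
M=\begin{pmatrix}\alpha+p & 1-p\\ 1-p & \alpha+p\end{pmatrix},
\]
with principal eigenvalue $\lambda_1=1+\alpha$ (eigenvector $(1,1)^T$) and subdominant eigenvalue $\lambda_2=2p+\alpha-1$ (eigenvector $(1,-1)^T$). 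The ratio $\lambda_2/\lambda_1=(2p+\alpha-1)/(1+\alpha)$ dictates the asymptotic regime and equals $1/2$ precisely when $p=(3-\alpha)/4$.

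Part (i) is a consequence of the strong-law component of Janson's theorem, which yields $X_n/n\to(1+\alpha)/2$ almost surely, combined with the one-step identity
\[
\mathbb{E}[R_{n+1}-R_n\mid\mathcal F_n]=\frac{pX_n+(1-p)Y_n}{W_n}=\frac12+(2p-1)\,\frac{X_n-Y_n}{2W_n};
\]
indeed $X_n-Y_n=o(n)$ a.s.\ forces $R_n/n\to 1/2$ (and $B_n/n\to 1/2$ via $R_n+B_n=n$). For parts (ii)--(iv), Janson's central limit theorem and his non-Gaussian limit theorem applied to the same urn give: Gaussian fluctuations of $X_n-Y_n$ at scale $\sqrt n$ in the subcritical regime $\lambda_2<\lambda_1/2$, fluctuations at scale $\sqrt{n\log n}$ at criticality, and an almost-sure limit $(X_n-Y_n)/n^{\lambda_2/\lambda_1}\to BZ$ in the supercritical regime, where $B$ is the sign $\pm1$ of the initial composition $X_1-Y_1$ (i.e., the root's colour) and $Z$ is independent of $B$ and almost surely positive.

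To pass from $(X_n,Y_n)$ to $(R_n,B_n)$ we iterate the identity above to get
\[
R_n-\frac n2=\text{const}+\frac{2p-1}{2}\sum_{k=1}^{n-1}\frac{X_k-Y_k}{W_k}+\mathcal M_n,
\]
where $\mathcal M_n$ is a martingale with uniformly bounded increments. In regime (iv), partial summation with $X_k-Y_k\sim BZ\,k^{\lambda_2/\lambda_1}$ and $W_k\sim(1+\alpha)k$ produces
\[
R_n-\frac n2\sim\frac{(2p-1)BZ}{2(2p+\alpha-1)}\,n^{(2p+\alpha-1)/(1+\alpha)},
\]
while $\mathcal M_n=O(\sqrt{n\log n})$ is dominated; in regimes (ii) and (iii) both the urn term and $\mathcal M_n$ contribute Gaussian fluctuations at the same order and combine via a joint CLT to yield $\Sigma_I$ and $\Sigma_{II}$. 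The main obstacle is pinning down $Z$ and its moments. The natural normalization $M_n:=(X_n-Y_n)/\prod_{k=1}^{n-1}(1+\lambda_2/W_k)$ is a martingale with $M_1=B$, and a routine Stirling computation gives $\prod_{k=1}^{n-1}(1+\lambda_2/W_k)\sim n^{\lambda_2/\lambda_1}\,\Gamma(1/(1+\alpha))/\Gamma((2p+\alpha)/(1+\alpha))$, accounting for $\mathbb{E}[Z]$. The expression for $\mathbb{E}[Z^2]$ is obtained by summing the conditional variances of the $M_n$-increments, which telescope into a second Pochhammer-type product; extracting the Gamma ratio requires careful bookkeeping of the variance contributions but is otherwise routine, and the $L^2$-boundedness of $M_n$ needed to pass to the limit is ensured by the supercritical condition $\lambda_2>\lambda_1/2$. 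Independence of $Z$ from $B$ follows from conditioning on the root colour and exploiting the red$\leftrightarrow$blue symmetry of the urn dynamics.
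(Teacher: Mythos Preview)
Your approach differs substantially from the paper's. You work with the two-type weight urn $(X_n,Y_n)$ alone and then pass to $(R_n,B_n)$ via the summation identity
\[
R_n-\tfrac{n}{2}=\text{const}+\tfrac{2p-1}{2}\sum_{k=1}^{n-1}\frac{X_k-Y_k}{W_k}+\mathcal M_n.
\]
The paper instead enlarges the urn: it runs a four-type urn that tracks, besides the weights $R_n^w,B_n^w$, the cluster counts $R_n^c,B_n^c$ (activity $0$), and then uses the exact linear identity
\[
R_n=\frac{R_n^w+\alpha R_n^c}{1+\alpha}+O(1)
\]
to recover $(R_n,B_n)$ as a linear image of the urn vector. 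This way Janson's theorem delivers the joint CLT and the covariance matrices $\Sigma_I,\Sigma_{II}$ in one stroke. (The paper also remarks that one can alternatively add activity-$0$ balls counting vertices directly.)

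For (i) and (iv) your route is fine: in the supercritical regime the almost-sure asymptotic $X_k-Y_k\sim BZ\,k^{\lambda_2/\lambda_1}$ plus Abel/Ces\`aro summation does yield the stated limit, and $\mathcal M_n=O(\sqrt{n\log n})$ is genuinely lower order. Your martingale normalisation for $\mathbb E[Z]$ is correct; the $\mathbb E[Z^2]$ computation you sketch is carried out in the paper by a direct appeal to \cite[Theorems~3.10, 3.26]{JANS:04}, which is cleaner than summing conditional variances by hand. (A small point: you assert $Z>0$ a.s.; this is neither needed nor established.)

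The genuine gap is in regimes (ii) and (iii). The sentence ``both the urn term and $\mathcal M_n$ contribute Gaussian fluctuations at the same order and combine via a joint CLT'' is doing all the work and none of it is justified. The predictable sum $S_n=\sum_k (X_k-Y_k)/W_k$ is not a martingale, so you cannot simply invoke a martingale CLT; you would need to rewrite $S_n$ (e.g.\ via $D_{k+1}-D_k=\lambda_2 D_k/W_k+\eta_k$, which gives $S_n=(D_n-D_1-N_n)/\lambda_2$ with $N_n$ a martingale) and then establish a \emph{joint} CLT for $(D_n,N_n,\mathcal M_n)$ together with an explicit asymptotic covariance computation matching $\Sigma_I$ and $\Sigma_{II}$. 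None of this is carried out. The quickest repair is precisely the paper's device: add two activity-$0$ types to the urn so that $(R_n,B_n)$ becomes a linear functional of the urn composition, after which Janson's theorem handles the joint limit and the covariance automatically.
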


\begin{remark}\label{rem:vts,p=1/2}
When $p=1/2$, every new vertex added to the tree is either red or blue with probability $1/2$, independent of everything that happened before. Therefore, $R_n$ is simply a sum of independent Bernoulli $\text{Be}(1/2)$ random variables, as is $B_n= n - R_n$. We see that in this case, the matrix $\Sigma_I$ in the convergence (ii) simplifies to 
\[ \Sigma_I = \frac{1}{4}\left(\begin{array}{cc} 1 & -1 \\ -1 & 1 \end{array}\right),\]
and we see that the random variables on the right hand side of the convergences in (iii) and (iv) degenererate to (0,0).
\end{remark}

We now turn to the number of clusters (maximal monochromatic subtrees). If we want to know the total number of clusters, we can first notice that whenever a child takes a different colour from its parent, a new cluster is formed. This is also the only way of forming a new cluster (in addition to the initial cluster containing the root). The probability that a newly added vertex does not take the colour of its parent is $1-p$, from which we can conclude that the total number of clusters at time $n$ is simply $1 + \text{Bin}(n-1, 1-p)$, where $\text{Bin}$ denotes a binomial random variable. 

\begin{theorem}\label{thm:clusters}
Let $R_n^c$ and $B_n^c$ denote the number of red and blue clusters respectively in a preferential attachment tree $\mathcal{T}_{n}$ with broadcasting induced colouring $\sigma_{n}$. 
\begin{enumerate}
\item[(i)] The following strong law of large numbers holds
\begin{equation*}\label{eq:clustersas}
 \frac{1}{n}(R_n^c, B_n^c) \xrightarrow{a.s.} \left(\frac{1-p}{2}, \frac{1-p}{2}\right).
 \end{equation*}

\item[(ii)]  If $p < (3- \alpha)/4$, then the following multivariate normal limit law holds
\begin{equation*}\label{eq:clusters<}
 \frac{(R_n^c, B_n^c) - n\left(\frac{1-p}{2}, \frac{1-p}{2}\right)}{\sqrt{n}} \xrightarrow{d} \mathcal{N}(\bm{0}, \Sigma^c_I),
 \end{equation*}
where 
\[ \Sigma^c_I = \frac{1-p}{4(3 - \alpha - 4p)}
\left( \begin{array}{cc}
(1-p)(\alpha + 4p + 1) & 3p - 4p^2 - \alpha p - \alpha - 1 \\
3p - 4p^2 - \alpha p - \alpha - 1 & (1-p)(\alpha + 4p + 1) 
\end{array} \right).\]

\item[(iii)] If $p = (3-\alpha)/4$, then the following multivariate normal limit law holds
\begin{equation*}\label{eq:clusters=}
 \frac{(R_n^c, B_n^c) - n\left(\frac{1-p}{2}, \frac{1-p}{2}\right)}{\sqrt{n\ln{n}}} \xrightarrow{d} \mathcal{N}(\bm{0}, \Sigma^c_{II}),
 \end{equation*}
where 
\[ \Sigma^c_{II} = \frac{\alpha+1}{16}
\left( \begin{array}{cc}
1 & -1 \\
-1 & 1 
\end{array}\right) .\]

\item[(iv)] If $p > (3-\alpha)/4$, then the following convergence in distribution holds
\begin{equation*}\label{eq:clusters>}
\frac{(R_n^c, B_n^c)- n\left(\frac{1-p}{2}, \frac{1-p}{2}\right)}{n^{(2p+\alpha - 1)/(1 + \alpha)}} \xrightarrow{d} \frac{B Z}{2(2p+\alpha -1)}(p-1,1-p),
\end{equation*}
where $Z$ is the same random variable as in \eqref{eq:vts>}, 
\end{enumerate}
\end{theorem}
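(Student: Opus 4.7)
The approach is to set up a four-type generalized P\'olya urn that refines the one used in Theorem \ref{thm:vts} by additionally tracking cluster counts, and then apply the limit theorems in \cite{JANS:04}. Let $X_{n,1}$ and $X_{n,2}$ be the total preferential-attachment weights $\sum_{u}(\alpha\deg^+(u)+1)$ of red and blue vertices in $\mathcal{T}_n$ (with activity $1$), and let $X_{n,3}=R_n^c$, $X_{n,4}=B_n^c$ (with activity $0$). A new child is attached to a vertex proportionally to its type-1/type-2 weight; if the parent is red, then with probability $p$ the child is red (adding $\alpha+1$ to $X_{n,1}$) and with probability $1-p$ the child is blue (adding $\alpha$ to $X_{n,1}$, $1$ to $X_{n,2}$, and $1$ to $X_{n,4}$, as a brand new blue cluster is born), with the symmetric rule for a blue parent. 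The resulting mean replacement matrix is
\[
A=\begin{pmatrix}
\alpha+p & 1-p & 0 & 1-p \\
1-p & \alpha+p & 1-p & 0 \\
0 & 0 & 0 & 0 \\
0 & 0 & 0 & 0
\end{pmatrix},
\]
with eigenvalues $\lambda_1=\alpha+1$, $\lambda_2=\alpha+2p-1$, and a double zero. The critical ratio $\lambda_2/\lambda_1=1/2$ is achieved exactly at $p=(3-\alpha)/4$, producing the three regimes of the theorem.

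Part (i) follows from Janson's strong law applied to this urn: the deterministic identity $X_{n,1}+X_{n,2}=(\alpha+1)n-\alpha$ together with the symmetry $X_{n,1}\stackrel{d}{=}X_{n,2}$ gives $X_{n,i}/n\to(\alpha+1)/2$ almost surely for $i=1,2$, and then the conditional expected increment $\mathbb{E}[\Delta R_n^c\mid\mathcal{F}_{n-1}]=(1-p)X_{n-1,2}/T_{n-1}\to(1-p)/2$ yields $R_n^c/n\to(1-p)/2$ almost surely by a standard martingale argument, with the symmetric statement for $B_n^c$. For (ii) and (iii), I would invoke the Gaussian limit laws of \cite{JANS:04} in the small-eigenvalue and critical regimes respectively; in each case the covariance matrix is obtained by evaluating the conditional covariance of a single increment at the asymptotic proportion $X_{n,1}/T_n=X_{n,2}/T_n=1/2$ and then solving the Lyapunov-type equation attached to $A$. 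The block-triangular structure of $A$ (with nontrivial dynamics confined to the top $2\times 2$ block) reduces the calculation for $(R_n^c,B_n^c)$ to that of $(X_{n,1},X_{n,2})$ already implicit in the proof of Theorem \ref{thm:vts}, augmented by a binomial noise term coming from the ``is a new cluster formed?'' decision at each step.

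For (iv), linearise around $X_{n,1}=X_{n,2}=T_n/2$ by setting $s_n=(X_{n,1}-X_{n,2})/2$; a short calculation gives $\mathbb{E}[\Delta s_n\mid\mathcal{F}_{n-1}]=\lambda_2 s_{n-1}/T_{n-1}$, from which $s_n/n^{\sigma_2}$ (with $\sigma_2=\lambda_2/\lambda_1$) converges almost surely to a non-degenerate random variable $W$. Since this is the exact mechanism driving Theorem \ref{thm:vts}(iv), matching scalars forces $W=BZ/2$ with the same $Z$. Integrating the drift $\mathbb{E}[\Delta R_n^c\mid\mathcal{F}_{n-1}]=(1-p)/2-(1-p)s_{n-1}/T_{n-1}$ then yields
\[
R_n^c-\tfrac{1-p}{2}n\sim -\tfrac{1-p}{\lambda_2}Wn^{\sigma_2}=\tfrac{(p-1)BZ}{2(2p+\alpha-1)}n^{\sigma_2},
\]
with the symmetric expression for $B_n^c-(1-p)n/2$; this produces the direction $(p-1,1-p)$ and the scalar $1/(2(2p+\alpha-1))$ stated in the theorem.

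The main obstacle will be the symbolic covariance computation in part (ii). The off-diagonal entry $3p-4p^2-\alpha p-\alpha-1$ and diagonal entry $(1-p)(\alpha+4p+1)$ of $\Sigma_I^c$ must be extracted from the Lyapunov equation, which couples the already-known $(X_{n,1},X_{n,2})$-fluctuations from Theorem \ref{thm:vts}(ii) with the cluster-forming binomial noise; the algebra remains tractable thanks to the block-triangular form of $A$, but the simplification to the compact matrix stated in the theorem requires careful bookkeeping.
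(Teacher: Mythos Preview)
Your proposal is correct and takes essentially the same approach as the paper: you set up the identical four-type urn (weights $X_{n,1},X_{n,2}$ with activity $1$ and cluster counts $R_n^c,B_n^c$ with activity $0$), obtain the same eigenvalues $\alpha+1,\ \alpha+2p-1,\ 0,\ 0$, and read off the three regimes from Janson's theorems. The paper differs only in presentation: it writes the intensity matrix in Janson's column convention (the transpose of yours), computes the full dual eigenbasis explicitly, and then applies its packaged Theorem~\ref{thm:urns} directly, delegating the covariance matrices in (ii)--(iii) to a \textsc{Mathematica} calculation and identifying $\widehat Z=BZ$ in (iv) simply by noting that the first two urn coordinates coincide with the two-type urn of Proposition~\ref{prop:weights}; your drift/martingale derivation in (iv) is just an unpacked version of that same observation.
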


We finish our summary of global properties with fringe subtrees. In a rooted tree $\mathcal{T}$ a fringe subtree $T$ consists of a vertex and all its descendents. The simplest example of a fringe subtree in $\mathcal{T}$ is a vertex with no descendents (a leaf of $\mathcal{T}$). Normal limit laws for the number of leaves in preferential attachment trees are already well known (see \cite{NAHE:82,MASM:92, JANS:05, HOJS:17}). 

In this simplest case, 
we offer covariance matrices for the limiting normal limit laws for 
the number of leaves of each colour in $\mathcal{T}_{n}$, though the distributions are already markedly more complicated than what we have described above.

\begin{theorem}\label{thm:leaves}
Let $R_n^l$ and $B_n^l$ denote the number of red and blue leaves respectively in a preferential attachment tree $\mathcal{T}_{n}$ with broadcasting induced colouring $\sigma_{n}$. 
\begin{enumerate}
\item[(i)] The following strong law of large numbers holds
\begin{equation*}\label{eq:leavesas}
 \frac{1}{n}(R_n^l, B_n^l) \xrightarrow{a.s.} \left(\frac{1+\alpha}{4+2\alpha}, \frac{1+\alpha}{4+2\alpha}\right).
 \end{equation*}
\item[(ii)] If $p < (3- \alpha)/4$ or if $p=1/2$, then the following multivariate normal limit law holds
\begin{equation*}\label{eq:leaves<}
 \frac{(R_n^l, B_n^l) - n\left(\frac{1+\alpha}{4+2\alpha}, \frac{1+\alpha}{4+2\alpha}\right)}{\sqrt{n} }\xrightarrow{d} \mathcal{N}(\bm{0} , \Sigma_I^l), 
 \end{equation*}
where 
\[ \Sigma_I^l = \frac{\alpha + 1}{4(2+\alpha)^2(3+\alpha)(2p-3)(4p+\alpha-3)}
\left(\begin{array}{cc}
\sigma^l_{1,1} & \sigma^l_{1,2} \\
\sigma^l_{2,1} & \sigma^l_{2,2}
\end{array}\right),\]
with 
\begin{align*}
\sigma^l_{1,1} &= \sigma^l_{2,2} =  \left(8 p^2-6 p-1\right) \alpha^3+\left(48 p^2-46 p+1\right) \alpha^2 \\
& \hspace{20mm} +\left(112 p^2-158 p+49\right) \alpha+88 p^2-158 p+71 \\
\sigma^l_{1,2} &= \sigma^l_{2,1} = \left(1 + 6p -8 p^2\right) \alpha^3-\left(48 p^2-50p+7\right) \alpha^2\\
& \hspace{20mm}-\left(96 p^2-126 p+37\right) \alpha-72 p^2+122p-53
\end{align*}
when $p \neq 1/2$, and 
\[ \Sigma_I^l = \frac{\alpha+1}{4(2+\alpha)^2(3+\alpha)}
\left(\begin{array}{cc}
7 + 6\alpha + \alpha^2 & -5-4\alpha-\alpha^2 \\
-5-4\alpha-\alpha^2 & 7 + 6\alpha + \alpha^2
\end{array}\right)\]
when $p = 1/2$. 
\item[(iii)]  If $p = (3-\alpha)/4$ and $p \neq \frac12$ (i.e., $\alpha \neq 1$), then the following multivariate normal limit law holds
\begin{equation*}\label{eq:leaves=}
 \frac{(R_n^l, B_n^l) - n\left(\frac{1+\alpha}{4+2\alpha}, \frac{1+\alpha}{4+2\alpha}\right)}{\sqrt{n\ln{n}} }\xrightarrow{d} \mathcal{N}(\bm{0} , \Sigma^l_{II}), 
 \end{equation*}
where 
\[ \Sigma^l_{II} = \frac{(\alpha-1)^2(\alpha+1)}{4(3+\alpha)^2}
\left( \begin{array}{cc}
1 & -1 \\
-1 & 1 
\end{array}\right) .\]

\item[(iv)] If $p > (3-\alpha)/4$ and $p \neq \frac12$, then the following convergence in distribution holds
\begin{equation*}\label{eq:leaves>}
\frac{(R_n^l, B_n^l)- n\left(\frac{1+\alpha}{4+2\alpha}, \frac{1+\alpha}{4+2\alpha}\right)}{n^{(2p+\alpha - 1)/(1 + \alpha)}} \xrightarrow{d} \frac{B Z}{2\alpha + 4p}(2p-1,1-2p),
\end{equation*}
where $Z$ is the same random variable as in \eqref{eq:vts>}.
\end{enumerate}
\end{theorem}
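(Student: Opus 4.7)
The plan is to extend the Pólya urn approach from Theorems \ref{thm:vts} and \ref{thm:clusters} to also track the leaf/internal status of each vertex. I would use a four-colour urn in which each red leaf contributes one ball of type~$1$, each blue leaf one ball of type~$2$, each red internal vertex of outdegree $k$ contributes $1+\alpha k$ balls of type~$3$, and each blue internal vertex of outdegree $k$ contributes $1+\alpha k$ balls of type~$4$. The total ball count equals the total preferential-attachment weight, so drawing a ball uniformly reproduces the selection rule \eqref{eq:prefprob}, and $R_n^l$, $B_n^l$ are literally the counts of type-$1$ and type-$2$ balls. The replacement rule is immediate from the dynamics: drawing a type-$1$ ball removes the leaf's type-$1$ ball and adds $1+\alpha$ type-$3$ balls, plus one type-$1$ (w.p.\ $p$) or type-$2$ (w.p.\ $1-p$) ball for the child; drawing a type-$3$ ball adds $\alpha$ extra type-$3$ balls plus the child's ball; the type-$2$ and type-$4$ cases are colour-swapped.

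The mean replacement matrix $A$ has all row sums equal to $1+\alpha$, so the Perron eigenvalue is $\lambda_1 = 1+\alpha$. By the red/blue symmetry, $A$ decomposes on the invariant subspaces spanned by vectors of the forms $(a,a,b,b)^T$ and $(a,-a,b,-b)^T$; a direct calculation gives eigenvalues $1+\alpha, -1$ on the first and $2p+\alpha-1, -1$ on the second. Hence $\lambda_2 = 2p+\alpha-1$, the phase transition $2\lambda_2 = \lambda_1$ is exactly $p = (3-\alpha)/4$, and the ratio $\lambda_2/\lambda_1 = (2p+\alpha-1)/(1+\alpha)$ is the polynomial scaling exponent in part (iv). The $\lambda_2$-eigenvector turns out to be proportional to $(1,-1,1,-1)^T$, which is the key to seeing that the limit in part (iv) is aligned with $(2p-1, 1-2p)$ in the leaf coordinates; the prefactor $1/(2\alpha+4p)$ arises from matching the normalisation of the driving random variable to the one used for $Z$ in \eqref{eq:vts>}, and the factor $B$ inherits the sign of the root colour, which is the only source of randomness in the direction of the $\lambda_2$ component in the large-$\lambda_2$ regime.

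Invoking the limit theorems of \cite{JANS:04} then delivers all four statements: the strong law of large numbers gives part (i) with the Perron proportions $(1+\alpha)/(4+2\alpha)$, and the central limit theorems give $\sqrt{n}$-scaled Gaussian limits for $p<(3-\alpha)/4$ (part (ii)), $\sqrt{n\ln n}$-scaled Gaussian limits at $p=(3-\alpha)/4$ with $\alpha\neq 1$ (part (iii)), and the non-normal limit at scale $n^{(2p+\alpha-1)/(1+\alpha)}$ for $p>(3-\alpha)/4$ (part (iv)). The prefactor $(\alpha-1)^2(\alpha+1)/(4(3+\alpha)^2)$ in part (iii) is the squared norm of the projection of the $\lambda_2$-eigenvector onto the leaf coordinates, after the normalisation forced by the logarithmic regime.

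The special case $p=1/2$ must be treated separately because the general formula for $\Sigma_I^l$ becomes singular at $(p,\alpha)=(1/2,1)$: when $p=1/2$ each vertex is coloured independently and uniformly, so conditional on the total leaf count $L_n$ one has $R_n^l \sim \text{Bin}(L_n, 1/2)$, and combining this with the classical CLT for $L_n$ from \cite{JANS:05,HOJS:17} gives the stated matrix via a conditional-variance decomposition. I expect the only real obstacle to be the explicit covariance computation in part (ii): substituting the four-type replacement distributions into Janson's integral formula and projecting onto the leaf coordinates produces a rational function of $p$ and $\alpha$, whose simplification to the compact forms $\sigma^l_{1,1}, \sigma^l_{1,2}$ given in the theorem is lengthy though routine, and one should verify that the general formula reduces to the stated $p=1/2$ matrix as a limit.
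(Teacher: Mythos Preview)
Your approach is essentially the same as the paper's: a four-type urn with types $r^l,b^l,r^u,b^u$ (your types $1,2,3,4$), the same replacement rules, the same eigenvalue computation $\{1+\alpha,\,2p+\alpha-1,\,-1,\,-1\}$, and an appeal to Janson's urn theorems for all four parts. Two small points of divergence are worth noting. First, for the identification of the limiting random variable in part~(iv) with the $Z$ of \eqref{eq:vts>}, the paper does not rely on ``matching normalisations'' but observes the exact linear relation $R_n^l+R_n^u=R_n^w$, $B_n^l+B_n^u=B_n^w$ and invokes Cram\'er--Wold plus uniqueness of limits; also, be careful that $(1,-1,1,-1)$ is the \emph{left} $\lambda_2$-eigenvector, while the direction of the limit is the \emph{right} eigenvector $\propto(2p-1,1-2p,1+\alpha,-(1+\alpha))$, which is what actually gives the $(2p-1,1-2p)$ leaf coordinates. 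Second, for $p=1/2$ the paper does not use your conditional-binomial argument but instead runs a separate three-type urn (merging $r^u,b^u$ into a single type $v^u$) and reads off $\Sigma_I^l$ from Janson's formula directly; your approach via $R_n^l\mid L_n\sim\mathrm{Bin}(L_n,\tfrac12)$ combined with the known CLT for $L_n$ is a perfectly valid alternative and arguably more transparent for producing the covariance matrix.
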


\begin{remark}\label{rem:lvs,p=1/2}
Once more, we witness a special case $p=1/2$. Though the explanation is not as simple as in Remark \ref{rem:vts,p=1/2}, the colour of a newly added leaf is independent of the colour of its parent. We see again that the random variables on the right hand side of the convergences in (iii) and (iv) degenerate to (0,0). When $p=1/2 \neq (3-\alpha)/4$, both matrices in $(ii)$ are equal. 
\end{remark}

Limiting joint distributions for the number of fringe subtrees (without colours) have already been studied \cite{HOJS:17}. Let $T_1, \ldots, T_m$ be a sequence of finite trees of sizes $k_1, \ldots, k_m$ with colourings $\varsigma_1, \ldots, \varsigma_m$. Let $\left.\sigma_n\right|_{T}$ denote the colouring $\sigma_n$ restricted to the subtree $T$. We say that two coloured rooted trees are isomorphic if there is an isomorphism between them that preserves roots and colours.

\begin{theorem}\label{thm:fringe}
Let $X_n^{i}$ be the number of fringe subtrees $T$ in $\mathcal{T}_n$ isomorphic to $T_i$ with colouring $\varsigma_i$, and let
\begin{equation}\label{eq:mu}
 \bm{\mu} = \left( \frac{\mathbb{P}((\mathcal{T}_{k_1}, \sigma_{k_1}) \simeq (T_1, \varsigma_1))\frac{1}{\alpha + 1}}{(k_1 + \frac{1}{\alpha + 1}- 1)(k_1 + \frac{1}{\alpha + 1})}, \ldots, \frac{\mathbb{P}((\mathcal{T}_{k_m}, \sigma_{k_m}) \simeq (T_m, \varsigma_m))\frac{1}{\alpha + 1}}{(k_m + \frac{1}{\alpha + 1} - 1)(k_m + \frac{1}{\alpha + 1})}\right).
\end{equation}
\begin{enumerate}
\item[(i)] The following strong law of large numbers holds
\[ \frac{1}{n}\left(X^{1}_n, \ldots, X^{m}_n\right) \xrightarrow{a.s} \bm{\mu}.\]
\item[(ii)] If $p < (3-\alpha)/4$ or if $p=1/2$, then the following multivariate normal limit law holds
\[ \frac{\left(X^{1}_n, \ldots, X^{m}_n\right) - n\bm{\mu}}{\sqrt{n}} \xrightarrow{d} \mathcal{N}(\bm{0}, \Sigma_I^f)\]
for some covariance matrix $\Sigma_I^f$.
\item[(iii)] If $p = (3-\alpha)/4$ and $p \neq \frac{1}{2}$ (i.e., $\alpha \neq 1$), then the following multivariate normal limit law holds
\[ \frac{\left(X^{1}_n, \ldots, X^{m}_n\right) - n\bm{\mu}}{\sqrt{n\ln n}} \xrightarrow{d} \mathcal{N}(\bm{0}, \Sigma_{II}^f)\]
for some covariance matrix $\Sigma_{II}^f.$
\item[(iv)] If $p > (3-\alpha)/4$ and $p \neq \frac{1}{2}$, then the following convergence in distribution holds
\[\frac{\left(X^{1}_n, \ldots, X^{m}_n\right) - n\bm{\mu}}{n^{(2p+\alpha-1)/(1+\alpha)}} \xrightarrow{d} B Z\bm{v}\]
for some vector $\bm{v}$, where $Z$ is the same random variable as in \eqref{eq:vts>}.
\end{enumerate}
\end{theorem}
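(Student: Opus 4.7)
The plan is to realise the augmented vector of all coloured fringe subtree counts as the ball counts in a generalised Pólya urn with finitely many types and then apply Janson's limit theorems \cite{JANS:04}, exactly as was done for Theorems \ref{thm:vts}--\ref{thm:leaves} and following the uncoloured strategy of Holmgren, Janson and \v{S}ileikis \cite{HOJS:17}. All three regimes of convergence emerge from the same spectral analysis, and the phase boundary coincides with the one already seen in the previous theorems.

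First I fix an integer $K > \max_i k_i$ and let $\mathcal{S}$ be the finite set of isomorphism classes of coloured rooted trees with at most $K$ vertices; for each $\tau \in \mathcal{S}$ let $Y_n^\tau$ count the vertices of $\mathcal{T}_n$ whose (coloured) fringe subtree is isomorphic to $\tau$. Each $X_n^i$ is a coordinate of $(Y_n^\tau)_{\tau \in \mathcal{S}}$, so it suffices to prove the analogue of the theorem for this vector. The ball types of the urn are the elements of $\mathcal{S}$ together with a finite family of auxiliary ``large'' types (indexed by the root colour together with a suitable truncation of outdegree) so that each type carries a fixed activity $\alpha \deg^+ + 1$ compatible with \eqref{eq:prefprob}, as in the standard construction of \cite{HOJS:17}. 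When a small-type ball $\tau$ is drawn, its type updates to another element of $\mathcal{S}$ or transitions to a large type, a new leaf ball is added whose colour matches the parent with probability $p$ and is opposite with probability $1-p$, and the finitely many other small-type ancestors along the path to the root update their types accordingly; when a large-type ball is drawn, only the large counts and the new leaf are affected, since a large fringe remains large after the attachment of one vertex.

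The replacement matrix $A$ has principal eigenvalue $\lambda_1 = 1+\alpha$ and a relevant secondary eigenvalue $\lambda_2 = 2p+\alpha-1$, inherited from the colour-imbalance eigenvector of the two-type urn underlying Theorem \ref{thm:vts}; by \cite{HOJS:17} all other eigenvalues of $A$ have real part strictly less than $(1+\alpha)/2$. Since $\lambda_2/\lambda_1 = (2p+\alpha-1)/(1+\alpha)$ is respectively less than, equal to, or greater than $\tfrac12$ exactly when $p < (3-\alpha)/4$, $p = (3-\alpha)/4$, or $p > (3-\alpha)/4$, Janson's theorem yields the three claimed regimes, with $\Sigma_I^f$, $\Sigma_{II}^f$ and $\bm{v}$ extracted from the corresponding eigenvectors of $A$. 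In regime (iv) the random variable $Z$ is the almost-sure limit of the unique (up to scaling) nonnegative martingale in the $\lambda_2$-eigenspace and therefore coincides with the $Z$ of Theorem \ref{thm:vts}(iv). For part (i), the strong law follows from the almost-sure part of Janson's theorem, and the expression for $\bm{\mu}$ factors as $\mathbb{P}((\mathcal{T}_{k_i},\sigma_{k_i}) \simeq (T_i,\varsigma_i))$ multiplied by the known limiting probability $\tfrac{1/(\alpha+1)}{(k_i + 1/(\alpha+1) - 1)(k_i + 1/(\alpha+1))}$ that a uniformly random vertex of $\mathcal{T}_{\alpha,n}$ has a fringe subtree of size exactly $k_i$ (see \cite{HOJS:17}), using that a fringe subtree of size $k$ in $\mathcal{T}_{\alpha,n}$ is itself distributed as $\mathcal{T}_{\alpha,k}$.

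The main obstacle is the set-up of the urn. A naive construction fails because the addition of a single vertex alters the fringe subtree of every ancestor of the chosen parent, which seems to violate the bounded-replacement structure of a Pólya urn. The key observation that salvages the approach is that any ancestor whose fringe is already large ($> K$ vertices) remains large afterwards, so its coarsened type is unaffected, while the number of ancestors with small fringe along the relevant root path is at most $K-1$, giving a bounded and tractable replacement rule. Formalising this observation, handling the interaction between outdegree (which controls activity) and fringe type (which is the statistic of interest) via the auxiliary large types, and checking the technical hypotheses of \cite{JANS:04} for the resulting irreducible urn constitute the bulk of the technical work.
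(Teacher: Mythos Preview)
Your urn is not a valid generalised P\'olya urn in the sense of \cite{JANS:04}. In such an urn, the (random) replacement vector must be a function of the \emph{type of the drawn ball alone}. In your construction each vertex $v$ carries its own ball whose type is the coloured fringe subtree rooted at $v$; when $v$ is drawn and a child is attached, you correctly note that at most $K-1$ ancestors with small fringe need updating, so the replacement is \emph{bounded}. But bounded is not the same as \emph{determined}: the type of $v$'s ball says nothing about the fringe types of $v$'s ancestors, so the replacement vector is not measurable with respect to the drawn type. For instance, if the drawn ball has type ``single red vertex'', the parent of $v$ could have any coloured fringe containing a red leaf, and each possibility calls for a different update. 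The process you describe is Markov in the full tree, not a finite-type urn, and Janson's theorem does not apply to it.

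The paper avoids this by a different bookkeeping that is also the one actually used in \cite{HOJS:17}: rather than one ball per vertex, it assigns a single ball to each \emph{maximal} fringe subtree lying in a downward-closed set $S$, with activity equal to the total weight $|T_i|(\alpha+1)-\alpha$ of that fringe; vertices outside any such maximal fringe are represented by special balls of colour $*_r$ or $*_b$ carrying only their individual weight. When a ball of type $(T_i,\varsigma_i)$ is drawn, the entire structure of the affected region is encoded in the type, so the attachment position, the new leaf colour, and the resulting decomposition into new balls are determined by the drawn type and independent randomness. Because the fringe was maximal in $S$, its root's parent already has a fringe outside $S$ and is represented by special balls whose counts are unchanged; hence no ancestor ever needs updating. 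This yields a genuine P\'olya urn, and an induction on $|S|$ shows its eigenvalues are exactly $1+\alpha$, $2p+\alpha-1$, and $-a_1,\ldots,-a_q$, after which Theorem~\ref{thm:urns} gives the three regimes. Your sketch of the identification of $\bm{\mu}$ and of $Z$ via the Cram\'er--Wold device is correct; the case $p=1/2$ also needs a small separate argument since $\lambda_2$ then coincides with $-a_1=-a_2$.
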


\begin{remark}
The matrices $\Sigma^f_I$, $\Sigma^f_{II}$, as well as the vector $\bm{v}$ can be calculated explicitly from the sequence $T_1, \ldots, T_m$ of trees and the colourings $\varsigma_1, \ldots, \varsigma_m$ (see Theorem \ref{thm:urns} below).
\end{remark}

\begin{remark}
The statements of Theorem \ref{thm:vts} (iv), Theorem \ref{thm:clusters} (iv), Theorem \ref{thm:leaves} (iv), and Theorem \ref{thm:fringe} (iv), all contain the same random variable $BZ$ in the limit. More precisely, as is made evident by the proofs in Section \ref{sec:global}, the sequences of random vectors in all of these statements converge jointly. This is proved by studying a P\'olya urn process (see Proposition \ref{prop:weights}) which is a linear transformation of each of the P\'olya urn processes used in the proofs of Theorems \ref{thm:vts}, \ref{thm:clusters}, \ref{thm:leaves}, and \ref{thm:fringe}, and applying the Cram\'er-Wold Theorem \cite[ch. 5, Theorem 10.5]{GUT:13}.
\end{remark}

%---------- ROOT CLUSTER ----------%
\subsection{Root cluster}\label{sec:resultsrootcluster}

We let $\mathcal{C}_n$ denote the {\em root cluster} in the random preferential attachment tree $\mathcal{T}_{n}$ with broadcasting induced colouring $\sigma_n$, that is, the maximal monochromatic subtree containing the root. As noted in the introduction, $\mathcal {C}_n$ is identically distributed as the root cluster in $\mathcal{T}_{n}$ after applying Bernoulli bond percolation with parameter $p$. 

The first theorem was previously proved by Möhle \cite{MOHL:15}, Baur and Bertoin \cite{BABE:15}, and Businger \cite{BUSI:18}.
\begin{theorem}\label{thm:rootclustera=0}
Let $\alpha = 0$. Then 
\[ \frac{|\mathcal{C}_n|}{n^p} \xrightarrow{d} \mathcal{C},\]
where $\mathcal{C}$ is a Mittag-Leffler distribution with parameter $p$; that is, $\mathcal{C}$ is characterized by its integer moments 
\[ \mathbb{E}[\mathcal{C}^k] = \frac{k!}{\Gamma(pk+1)}.\]
\end{theorem}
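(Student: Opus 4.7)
The plan is to prove this by the method of moments, exploiting the especially clean Markov chain structure that $\alpha = 0$ gives the root cluster size. Write $C_n := |\mathcal{C}_n|$. Since for a random recursive tree the parent of vertex $n+1$ is uniform on the existing $n$ vertices, and since a new vertex takes its parent's colour with probability $p$, we have $C_1 = 1$ and
\[
\mathbb{P}(C_{n+1} = C_n + 1 \mid C_n) = \frac{p C_n}{n}, \qquad \mathbb{P}(C_{n+1} = C_n \mid C_n) = 1 - \frac{p C_n}{n}.
\]

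The central calculation is to track the rising factorial moments $M_k(n) := \mathbb{E}\bigl[C_n(C_n+1)\cdots(C_n+k-1)\bigr]$. Using the identity $(c+1)(c+2)\cdots(c+k) = \tfrac{c+k}{c}\cdot c(c+1)\cdots(c+k-1)$ (valid because $C_n \geq 1$), a one-line conditional expectation calculation collapses to the first-order linear recurrence
\[
M_k(n+1) = \left(1 + \frac{pk}{n}\right) M_k(n),
\]
which, together with $M_k(1) = k!$, iterates to the closed form $M_k(n) = k!\,\Gamma(n+pk)/[\Gamma(pk+1)\,\Gamma(n)]$; Stirling gives the asymptotic $M_k(n) \sim k!\, n^{pk}/\Gamma(pk+1)$. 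Expanding rising factorials into ordinary powers via Stirling numbers of the first kind, and using induction on $k$ to absorb the lower-order pieces, delivers $\mathbb{E}[(C_n/n^p)^k] \to k!/\Gamma(pk+1)$ for every positive integer $k$.

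Finally I would upgrade moment convergence to distributional convergence by checking that the Mittag-Leffler law is moment-determined. Writing $m_{2k} = (2k)!/\Gamma(2pk+1)$, Stirling yields $m_{2k}^{1/(2k)} = \Theta(k^{1-p})$, so $\sum_{k \geq 1} m_{2k}^{-1/(2k)} = \infty$ for $p \in (0,1]$ and Carleman's criterion applies. The only substantive step is the first one: recognising that the rising factorial makes the Markov step act by scalar multiplication, thereby decoupling the moments across different $k$. This decoupling is specific to $\alpha = 0$, and its failure is exactly what forces the heavier analytic-combinatorics machinery needed for the general preferential attachment results later in the paper.
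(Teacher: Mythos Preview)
Your argument is correct and genuinely different from the paper's. The paper proceeds via analytic combinatorics: it sets up the bivariate exponential generating function $R(x,u)$ for $(r_{n,k})$, derives a PDE from the recursive tree decomposition, solves it in closed form as $R(x,u)=-\frac{1}{p}\ln\bigl(1-u+u(1-x)^p\bigr)$, differentiates $k$ times in $u$ at $u=1$, and extracts coefficients by singularity analysis to obtain the falling factorial moments of $|\mathcal{C}_n|$. Your approach bypasses generating functions entirely by observing that for $\alpha=0$ the size $C_n$ is itself a Markov chain, and that the \emph{rising} factorial is the exact functional that linearises the one-step recursion, yielding the product formula for $M_k(n)$ directly. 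Both reach the same limiting moments. Your route is shorter and more probabilistic; the paper's route has the advantage that the same generating-function framework (PDE for $R(x,u)$, Fa\`a di Bruno for higher $u$-derivatives, transfer theorem) carries over verbatim to $\alpha>0$ and $\alpha=-1/d$, where $(C_n)$ is no longer Markov and your rising-factorial trick is unavailable---exactly the point you make in your last paragraph. For moment determinacy, the paper simply notes that the Mittag--Leffler function $E_p(s)=\sum_{n\ge 0}s^n/\Gamma(pn+1)$ is entire, so the moment generating function converges everywhere; your Carleman verification is an equally valid alternative.
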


Baur \cite{BAUR:20} proved convergence in distribution for $|\mathcal{C}_n|/n^{(p+\alpha)/(1+\alpha)}$ when $\alpha >0$ (though we believe the method may also apply for applicable $\alpha > -1/p$). We extend the results by finding a recursion for the integer moments of the limiting distribution. This recursion uses the partial Bell polynomials (see~\cite[Chapter 3.3]{COMT:74})
\[ B_{k,j}(x_1, \ldots, x_{k-j+1}) = \sum_{\substack{m_1 + \cdots + (k-j+1)m_{k-j+1} = k\\ m_1 + \cdots + m_{k-j+1} = j}}k!\prod_{i=1}^{k-j+1} \frac{x_i^{m_i}}{m_i!i!^{m_i}}.\]

\begin{theorem}\label{thm:rootclustera>0}
Let $\alpha > 0$. Then 
\[ \frac{|\mathcal{C}_n|}{n^{(p+\alpha)/(1 + \alpha)}} \xrightarrow{d} \mathcal{C},\]
where $\mathcal{C}$ has integer moments 
\[ \mathbb{E}[\mathcal{C}^k] = \frac{C_k(1 + \alpha)\Gamma(1/(1 + \alpha))}{\alpha\Gamma((kp + \alpha(k-1))/(\alpha + 1))},\]
where $C_k$ satisfies the recursion $C_1 = \alpha/(p + \alpha)$ and 
\[ (k-1)(p/\alpha +1)C_k = \sum_{j=2}^k \frac{p^j \Gamma(1/\alpha + j)}{\Gamma(1/\alpha)}B_{k,j}(C_1, \ldots, C_{k-j+1}).\]
\end{theorem}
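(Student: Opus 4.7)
The plan is to use the framework of analytic combinatorics for weighted increasing trees and extract the moments of $|\mathcal{C}_n|$ via singularity analysis. First, I would encode the tree $\mathcal{T}_n$ together with its broadcasting colouring by a bivariate generating function $T(z,u)$, where $z$ marks the total size and $u$ marks the size of the root cluster. Splitting each child of a vertex according to whether it inherits its parent's colour (weight proportional to $p$, contributing to the root cluster and further marked expansion) or starts a new colour class (weight proportional to $1-p$, contributing an unmarked subtree generated by $T(z,1)$) yields a differential functional equation of the form $\partial_z T(z,u) = F(T(z,u), T(z,1); u)$ in which the coupling to the unmarked tree $T(z,1)$ accounts for subtrees cut away from the root cluster. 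The base function $T(z,1)$ has a well-understood algebraic singularity at $z_0$ whose exponent is governed by the linear weight $\phi(k) = \alpha k + 1$.

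Second, I would introduce the moment generating functions $M_k(z) = \partial_u^k T(z,u)\big|_{u=1}$. Differentiating the functional equation $k$ times in $u$ and applying Fa\`a di Bruno's formula naturally produces the partial Bell polynomials $B_{k,j}$ evaluated at $(M_1,\dots,M_{k-j+1})$; this gives a triangular system of first-order linear ODEs for $M_k$ driven by lower-order $M_j$, with an integrating factor that is a power of $T(z,1)$. Singularity analysis at $z_0$, carried out inductively on $k$, shows that
\[ [z^n] M_k(z) \sim \frac{\widetilde{C}_k \, n^{(kp + \alpha(k-1))/(1+\alpha)}}{\Gamma\bigl((kp+\alpha(k-1)+1)/(1+\alpha)\bigr)} \, z_0^{-n}, \]
where the constants $\widetilde{C}_k$ obey a recursion read off from matching the leading singular term in each differentiated equation; this recursion is precisely the one involving $B_{k,j}$ stated in the theorem.

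Third, dividing by the appropriate normalising coefficient $[z^n] T(z,1)$ converts the singular asymptotics into asymptotics for $\mathbb{E}[|\mathcal{C}_n|^k]$, and rescaling by $n^{k(p+\alpha)/(1+\alpha)}$ yields the stated closed form for $\mathbb{E}[\mathcal{C}^k]$. The convergence in distribution $|\mathcal{C}_n|/n^{(p+\alpha)/(1+\alpha)} \to \mathcal{C}$ is already due to Baur \cite{BAUR:20}; what remains is to upgrade convergence of moments to identification of the law, which follows by an elementary induction on the recursion showing that the $C_k$ grow only factorially-with-fractional-exponent, so that $\mathcal{C}$ is determined by its moments via Carleman's condition.

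The main obstacle is the second step: extracting the clean recursion from the differentiated ODE requires precise bookkeeping of the singular exponents $(kp+\alpha(k-1)+1)/(1+\alpha)$ together with the coefficients generated by Fa\`a di Bruno applied to the composition inside $F$. Baur handled this for $k \le 2$ to obtain the first two moments; the content of Theorem \ref{thm:rootclustera>0} is verifying that the induction closes at every order, producing the exact factor $(k-1)(p/\alpha+1)$ on the left-hand side and the coefficients $p^j\Gamma(1/\alpha+j)/\Gamma(1/\alpha)$ on the right. The $\Gamma$-factors in these coefficients are a tell-tale sign that they arise from derivatives of a power of $(1-u)$ type factor inside $\phi$, which is where the Pochhammer-like symbols naturally appear.
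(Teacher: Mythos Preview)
Your proposal matches the paper's approach almost exactly: the bivariate generating function, the triangular system of linear ODEs for the $M_k$ obtained via Fa\`a di Bruno, and the inductive singularity analysis are precisely what the paper carries out in Lemma~\ref{lem:supera}. The one substantive difference is the moment-determinacy step. You propose to bound the growth of $C_k$ directly by induction on the recursion and invoke Carleman; the paper instead observes that the exponential generating function $c(x) = \sum_{k\ge 1} C_k x^k/k!$ satisfies the separable ODE
\[ xc'(x) = \frac{\alpha}{p+\alpha}\Bigl(c(x) - 1 + (1-pc(x))^{-1/\alpha}\Bigr), \]
and shows (Lemma~\ref{lem:Ckpowerseries}) that this has a unique analytic solution near $0$. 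This is cleaner than extracting a direct growth estimate from the Bell-polynomial recursion and it yields more: an implicit equation for $c(x)$ that, in the special case $\alpha=1$, can be solved in closed form by Lagrange inversion (Theorem~\ref{thm:rootclustera=1}). Your appeal to Baur for the convergence in distribution is also unnecessary once moment determinacy is in hand, since the method of moments then gives convergence in distribution directly.
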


By using the recursion in Theorem \ref{thm:rootclustera>0}, we calculate the first two moments of $\mathcal{C}$ to be 
\begin{align*}
\mathbb{E}[\mathcal{C}] &= \frac{(1+\alpha)\Gamma\left(\frac{1}{1+\alpha}\right)}{(p + \alpha)\Gamma\left(\frac{p}{1 + \alpha}\right)},\\
\mathbb{E}[\mathcal{C}^2] &= \frac{p^2(1 + \alpha)^2\Gamma\left(\frac{1}{1+\alpha}\right)}{(p + \alpha)^3 \Gamma\left(\frac{2p + \alpha}{1 + \alpha}\right)}, 
\end{align*}
which agrees with the calculations given in \cite{BAUR:20}.

Baur provides a description of the sizes of the remaining clusters \cite[Corollary 4.3]{BAUR:20}. If the $i$'th added vertex is the root of a cluster, the size of this cluster, scaled by $n^{(p + \alpha)/(1 + \alpha)}$, converges in distribution to 
\[ \beta_i^{(p + \alpha)/(1 + \alpha)}\mathcal{C},\]
where $\mathcal{C}$ is the random variable given in Theorem \ref{thm:rootclustera>0} and $\beta_i$ is a Beta$(1/(1 + \alpha), i)$ distributed random variable.

In the special case $\alpha = 1$, we are able to find a closed form for the recursion given in Theorem \ref{thm:rootclustera>0}, and with it, a more precise description of the limiting distribution of $|\mathcal{C}_n|$ after proper rescaling. 

\begin{theorem}\label{thm:rootclustera=1}
Let the underlying tree be a random plane-oriented recursive tree, so $\alpha = 1$. Then the integer moments of the limiting distribution $\mathcal{C}$ in Theorem \ref{thm:rootclustera>0} can be written as 
\[ \mathbb{E}[\mathcal{C}^k] = \frac{2p^{k-1}\Gamma(kp + k -1)\sqrt{\pi}}{(p+1)^{2k-1}\Gamma(kp)\Gamma((kp + (k-1))/2)}.\]
\end{theorem}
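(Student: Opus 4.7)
The plan is to evaluate the recursion of Theorem \ref{thm:rootclustera>0} at $\alpha=1$ in closed form, via an exponential generating function approach. Substituting $\alpha=1$ into the formula for $\mathbb{E}[\mathcal C^k]$ reduces the statement to proving
\[
C_k \;=\; \frac{p^{k-1}\,\Gamma(kp+k-1)}{(p+1)^{2k-1}\,\Gamma(kp)},
\]
since $\Gamma(1/(1+\alpha)) = \Gamma(1/2) = \sqrt{\pi}$ and $\Gamma((kp+\alpha(k-1))/(\alpha+1)) = \Gamma((kp+k-1)/2)$. The recursion at $\alpha=1$ simplifies because $\Gamma(1/\alpha+j)/\Gamma(1/\alpha) = j!$, yielding
\[
(k-1)(p+1)\,C_k \;=\; \sum_{j=2}^{k} p^{j} j!\, B_{k,j}(C_1,\ldots,C_{k-j+1}), \qquad C_1 = \tfrac{1}{p+1}.
\]

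The key step is to package this recursion into a differential equation. Introduce the exponential generating function
\[
D(x) = \sum_{k\ge 1} C_k \frac{x^k}{k!}.
\]
The standard Bell polynomial identity (Faà di Bruno) applied to the composition $F(D(x))$ with $F(u) = \sum_{j\ge 2}(pu)^j = (pu)^2/(1-pu)$ gives
\[
k!\,[x^k]\,F(D(x)) \;=\; \sum_{j=2}^{k} p^{j}j!\, B_{k,j}(C_1,\ldots,C_{k-j+1}).
\]
On the other hand $(k-1)C_k/k! = [x^k](xD'(x)-D(x))$. Combining these, the recursion becomes the first-order ODE
\[
(p+1)\bigl(x D'(x) - D(x)\bigr) \;=\; \frac{(pD(x))^{2}}{1 - pD(x)},
\]
with initial condition $D(x) = x/(p+1) + O(x^2)$. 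Inverting the roles of $x$ and $D$, this is linear in $x(D)$, and separation of variables together with partial fractions on $(1-pD)/\bigl(D(1-\tfrac{p}{p+1}D)\bigr)$ yields the implicit solution
\[
x \;=\; (p+1)\,D\left(1 - \frac{p\,D}{p+1}\right)^{\!p},
\]
with the constant of integration fixed by matching $C_1 = 1/(p+1)$.

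The final step is Lagrange inversion. Setting $y = pD/(p+1)$ and $z = px/(p+1)^2$ converts the equation to the classical form $y = z(1-y)^{-p}$, whence
\[
[z^n]y(z) \;=\; \frac{1}{n}[y^{n-1}](1-y)^{-pn} \;=\; \frac{\Gamma(pn+n-1)}{n!\,\Gamma(pn)}.
\]
Translating back to $D$ and $x$ gives the claimed closed form for $C_k$, and substitution into Theorem \ref{thm:rootclustera>0} yields the asserted moment formula.

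The main obstacle is the middle step: recognizing the right-hand side of the Bell-polynomial recursion as the coefficient of a composition $F(D(x))$ with the right generating function $F$, and then carrying out the integration so that a clean algebraic form in $D$ emerges (the fortunate cancellation $\tfrac{p}{p+1}+\tfrac{p^2}{p+1}=p$ in the partial fraction decomposition is what makes this tractable at $\alpha=1$; for general $\alpha$ one expects an implicit relation not solvable in elementary closed form, which is why the theorem is specific to $\alpha=1$). Once the implicit equation $x=(p+1)D(1-\tfrac{pD}{p+1})^p$ is in hand, Lagrange inversion is routine.
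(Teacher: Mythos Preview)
Your proposal is correct and follows essentially the same route as the paper: the ODE you derive for $D(x)$ is exactly the specialization of Lemma~\ref{lem:Ckpowerseries} to $\alpha=1$ (you simply rederive that lemma's ODE directly from the Bell-polynomial recursion rather than quoting it), and the subsequent separation of variables, the implicit form $x=(p+1)D(1-\tfrac{pD}{p+1})^{p}$, and the Lagrange inversion all match the paper's argument line by line.
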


We now turn to the case when $\alpha = -1/d$ for an integer $d \geq 2$, that is, when the underlying tree $\mathcal{T}_n$ is a random increasing $d$-ary tree.  
If we consider $\mathcal{T}_n$ as a subtree of an infinite $d$-ary tree $T_d$, then the colouring $\sigma_n$ can be recovered from bond percolation on $T_d$: start by assigning the root either red or blue, and assign to a vertex $v$ the colour of its parent $u$ if the edge joining $u$ and $v$ is still present after performing bond percolation, and the other colour otherwise. In this way, the root cluster $\mathcal{C}_n$ of $\mathcal{T}_n$ is a subtree of the cluster $\mathcal{K}_d$ of $T_d$ containing the root after performing bond percolation. Using this fact, we can prove the following result on the sizes of $|\mathcal{C}_n|$ and $|\mathcal{K}_d|$ (which may be infinite). 

\begin{theorem}\label{thm:rootclusterba.s.}
Let $|\mathcal{C}_n|$ be the size of the root cluster of $\mathcal{T}_n$ with broadcasting induced colouring $\sigma_n$. Then $|\mathcal{C}_n| \xrightarrow{a.s.} |\mathcal{K}_d|.$
\end{theorem}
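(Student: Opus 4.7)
The plan is to realise both $(\mathcal{T}_n, \sigma_n)$ and $\mathcal{K}_d$ on one probability space and then argue that $\mathcal{T}_n$ eventually exhausts $T_d$. I start from the infinite rooted $d$-ary tree $T_d$ with labelled child slots, equipped with independent Bernoulli$(p)$ variables $\eta_e$ on the edges $e$ of $T_d$ and an independent fair coin for the colour of the root. Let $\mathcal{K}_d$ be the root component of the subgraph of $T_d$ consisting of edges with $\eta_e=1$. I grow $\mathcal{T}_n$ as a subtree of $T_d$ by the usual rule: at step $n$, draw a parent $v\in V(\mathcal{T}_{n-1})$ according to \eqref{eq:prefprob} with $\alpha=-1/d$, then place the new child uniformly at random in one of the $d-\deg^+(v)$ still-empty child slots of $v$. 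A short weight computation shows that this uniform-slot rule reproduces the correct marginal law of $\mathcal{T}_n$, and colouring the new vertex the same as its parent when the corresponding edge has $\eta_e=1$ and the opposite colour otherwise exactly realises the broadcasting rule. Under this coupling the pointwise identity $\mathcal{C}_n = \mathcal{K}_d \cap V(\mathcal{T}_n)$ holds, since $\mathcal{T}_n$ is a subtree of $T_d$, so the root-to-$w$ paths in the two trees coincide, and a vertex lies in $\mathcal{C}_n$ iff every edge on that path has $\eta_e=1$.

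From this identity together with the monotonicity of $V(\mathcal{T}_n)$ in $n$, the sequence $\mathcal{C}_n$ is monotone increasing and contained in $\mathcal{K}_d$, so the only remaining task is to prove that $\bigcup_{n\ge 1} V(\mathcal{T}_n) = V(T_d)$ almost surely. This yields $\mathcal{C}_n \uparrow \mathcal{K}_d$, and the conclusion follows by monotone convergence (including the case $|\mathcal{K}_d|=\infty$). I would establish the exhaustion by induction on the depth of $v\in T_d$. The root lies in $\mathcal{T}_1$; assuming every vertex of depth at most $k$ almost surely appears in some $\mathcal{T}_n$, fix a vertex $u$ of depth $k$ and condition on $u\in \mathcal{T}_{T_u}$ for some a.s.\ finite $T_u$. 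For every $n\ge T_u$ with $\deg^+_{\mathcal{T}_n}(u)<d$, the conditional probability that the $(n{+}1)$-st added vertex is a child of $u$ equals $(d-\deg^+_{\mathcal{T}_n}(u))/((d-1)n+1)\ge 1/((d-1)n+1)$. Since $\sum_n 1/((d-1)n+1)=\infty$, the L\'evy conditional form of the Borel--Cantelli lemma forces infinitely many children of $u$ to be added on the event $\{\deg^+_{\mathcal{T}_n}(u)<d\text{ for all }n\}$, which is a contradiction; hence $\deg^+_{\mathcal{T}_n}(u)$ reaches $d$ in finite time almost surely. Because each new child is placed in a uniformly chosen empty slot, all $d$ children of $u$ in $T_d$ eventually lie in $\mathcal{T}_n$, advancing the induction. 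A countable union bound over $V(T_d)$ then gives the exhaustion.

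The step with the real content is the inductive exhaustion argument: the probability that a particular empty slot of a given vertex is filled at step $n$ is only of order $1/n$, so the standard independent form of Borel--Cantelli is inapplicable and one genuinely needs the conditional version, together with a careful induction to propagate the argument through vertices of $T_d$ at arbitrary depth. Once this is in hand the remainder is bookkeeping, and $|\mathcal{C}_n|\xrightarrow{a.s.} |\mathcal{K}_d|$ follows from $\mathcal{C}_n\uparrow \mathcal{K}_d$.
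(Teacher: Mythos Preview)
Your proof is correct and shares the same starting coupling as the paper --- realising $\mathcal{T}_n$ as an increasing subtree of $T_d$ with edge-variables $\eta_e$, so that $\mathcal{C}_n=\mathcal{K}_d\cap\mathcal{T}_n$ and $|\mathcal{C}_n|$ is monotone and bounded by $|\mathcal{K}_d|$ --- but the \emph{exhaustion} step is handled differently. The paper invokes a known result on the saturation (fill-up) level $\overline{H}_n$ of random $d$-ary increasing trees, namely $\mathbb{E}[\overline{H}_n]\sim a\ln n$ with exponential concentration \cite[Theorem 6.46]{DRMO:09}, and then splits into two cases: if $|\mathcal{K}_d|=\infty$ it uses the crude bound $|\mathcal{C}_n|\ge \overline{H}_n\to\infty$, while if $|\mathcal{K}_d|<\infty$ it shows $\mathbb{P}(\overline{H}_n<h(\mathcal{K}_d))\to 0$, giving convergence in probability which is then upgraded to almost sure convergence via monotonicity. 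Your argument instead proves directly that $\bigcup_n V(\mathcal{T}_n)=V(T_d)$ almost surely by induction on depth and the conditional (L\'evy) Borel--Cantelli lemma, which handles both cases at once and avoids the external citation. Your route is more elementary and self-contained; the paper's route imports stronger quantitative information about $\overline{H}_n$ than the theorem actually requires, but of course that information is available off the shelf.
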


 Using well known results on the size of $|\mathcal{K}_d|$, the following corollary is immediate:

\begin{corollary}\label{thm:rootclusterbsub}
Let $\alpha = -1/d$, where $d\geq 2$ is a positive integer. Then for every positive integer $k$, 
\begin{equation}\label{eq:OttDwa}
\mathbb{P}\left(\lim_{n\to \infty} |\mathcal{C}_n| = k\right) = \frac{1}{k}\binom{kd}{k-1}p^{k-1}(1-p)^{kd-k+1}.
\end{equation}
\end{corollary}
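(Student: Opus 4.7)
The plan is to combine the almost sure convergence $|\mathcal{C}_n| \to |\mathcal{K}_d|$ from Theorem \ref{thm:rootclusterba.s.} with the classical identification of the bond percolation cluster at the root of an infinite $d$-ary tree as the total progeny of a Galton--Watson branching process. Since almost sure convergence implies convergence in distribution, and since the limiting random variable $|\mathcal{K}_d|$ is integer-valued (or $+\infty$), it suffices to compute $\mathbb{P}(|\mathcal{K}_d| = k)$ for each positive integer $k$ and conclude $\mathbb{P}(\lim_n |\mathcal{C}_n| = k) = \mathbb{P}(|\mathcal{K}_d| = k)$.

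To identify $|\mathcal{K}_d|$ as a branching process, I would argue as follows. The infinite $d$-ary tree $T_d$ with bond percolation of parameter $p$ has the property that each vertex keeps, independently, each of its $d$ child-edges with probability $p$ and deletes it with probability $1-p$. Hence the component of the root, explored in a breadth-first manner, is precisely the genealogical tree of a Galton--Watson process whose offspring distribution is $\mathrm{Bin}(d,p)$. Therefore $|\mathcal{K}_d|$ has the law of the total progeny of such a process.

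The final step is to apply the Otter--Dwass (cycle lemma) formula, which says that for any offspring random variable $X$ with i.i.d.\ copies $X_1, X_2, \dots$ and partial sums $S_k = X_1 + \cdots + X_k$, the total progeny $N$ of the corresponding Galton--Watson tree satisfies
\[
\mathbb{P}(N = k) = \frac{1}{k}\mathbb{P}(S_k = k-1).
\]
Taking $X \sim \mathrm{Bin}(d,p)$ gives $S_k \sim \mathrm{Bin}(kd,p)$, so
\[
\mathbb{P}(|\mathcal{K}_d| = k) = \frac{1}{k}\binom{kd}{k-1}p^{k-1}(1-p)^{kd-k+1},
\]
which is exactly \eqref{eq:OttDwa}. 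There is no real obstacle here: the identification of the cluster with a Galton--Watson tree is standard for regular trees, and the Otter--Dwass formula is a well-known result; the only care needed is to note that $|\mathcal{C}_n|$ being a.s.\ convergent to $|\mathcal{K}_d|$ means that the event $\{\lim_n |\mathcal{C}_n| = k\}$ coincides (up to a null set) with $\{|\mathcal{K}_d| = k\}$, so the probability stated is correct for each finite $k$ regardless of whether $|\mathcal{K}_d|$ is a.s.\ finite (i.e., regardless of whether $dp \leq 1$ or $dp > 1$).
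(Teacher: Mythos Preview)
Your proposal is correct and follows essentially the same approach as the paper: the paper's proof simply states that the corollary follows immediately from Theorem~\ref{thm:rootclusterba.s.} together with the formula~\eqref{eq:completecluster}, where \eqref{eq:completecluster} is derived just before by identifying $\mathcal{K}_d$ with a Galton--Watson tree with $\mathrm{Bin}(d,p)$ offspring and invoking the Otter--Dwass formula. Your argument spells out exactly these steps, with the additional (correct) observation that the identification $\{\lim_n |\mathcal{C}_n| = k\} = \{|\mathcal{K}_d| = k\}$ holds up to a null set regardless of whether $|\mathcal{K}_d|$ is a.s.\ finite.
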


\begin{remark}
When $p \leq 1/d$, the probabilities in \eqref{eq:OttDwa} sum to 1, and so the root cluster is almost surely finite. 
\end{remark}

When $\alpha = -p$, the root cluster is almost surely finite, though its expected size grows to infinity. In fact, we can describe the asymptotic behaviour of all the moments of $|\mathcal{C}_n|$.

\begin{theorem}\label{thm:rootclusterbcrit}
Let $\alpha = -1/d$, where $d\geq 2$ is a positive integer, and let $p = -\alpha = 1/d$. Then 
\[ \mathbb{E}[|\mathcal{C}_n|^k] \sim E_k\ln^{2k-1}n,\]
where $E_k$ satisfies the recursion $E_1 = 1/(d-1)$ and 
\begin{equation*}
(2k-1)E_k = \frac{1}{2d}\sum_{j=1}^{k-1}\binom{k}{j}E_jE_{k-j}.
\end{equation*}
\end{theorem}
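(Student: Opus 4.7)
My approach mirrors the generating-function strategy used for Theorem \ref{thm:rootclustera>0}, adapted to the critical $d$-ary regime $\alpha=-1/d$, $p=1/d$. Let $\hat{T}(x,y)$ be the EGF for $d$-ary increasing trees endowed with a broadcasting-induced colouring, with $x$ marking the total size and $y$ marking the size of the root cluster, and let $\hat{U}(x) = \hat{T}(x,1) = (1-(d-1)x)^{-1/(d-1)}-1$ be the standard EGF of $d$-ary increasing trees, satisfying $\partial_x\hat{U}=(1+\hat{U})^d$. Decomposing the root via its $d$ ordered slots---each slot is empty, kept (probability $1/d$, contributing $\hat{T}$ since the cluster continues), or not kept (probability $1-1/d$, contributing $\hat{U}$)---yields
\[
\partial_x \hat{T}(x,y) = y\Bigl(1 + \tfrac{1}{d}\hat{T}(x,y) + \tfrac{d-1}{d}\hat{U}(x)\Bigr)^{\!d}, \qquad \hat{T}(0,y)=0.
\]
Setting $A=1+\hat{U}$ (so $A'=A^d$), $W=\hat{T}-\hat{U}$ and $u=y-1$ rewrites this as $\partial_x W = uA^d + (1+u)\sum_{j=1}^d\binom{d}{j}d^{-j}A^{d-j}W^j$.

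I then expand $W = \sum_{k\ge 1}W_k(x)u^k$ and, reading off the coefficient of $u^k$, obtain a chain of linear ODEs $\partial_x W_k - A^{d-1}W_k = R_k$, with $R_1=A^d$ and, for $k\ge 2$,
\[
R_k = A^{d-1}W_{k-1} + \sum_{j=2}^d\binom{d}{j}d^{-j}A^{d-j}\bigl(S_j(k)+S_j(k-1)\bigr), \qquad S_j(k)=\!\!\!\sum_{\substack{m_1+\cdots+m_j=k\\ m_i\ge 1}}\!\!\!W_{m_1}\cdots W_{m_j}.
\]
Because $\partial_x A = A^d$, the integrating factor is $1/A$, and the ODEs can be solved recursively via $W_k(x) = A(x)\int_0^x R_k(t)/A(t)\,dt$ subject to $W_k(0)=0$. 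For $k=1$ this integrates to $W_1 = AL/(d-1)$, using $L(x)=\log\bigl(1/(1-(d-1)x)\bigr)$ and $dL/dx=(d-1)A^{d-1}$.

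The heart of the argument is the inductive claim that $W_k(x) \sim C_k A(x)L(x)^{2k-1}$ as $x\to 1/(d-1)$, with $C_k = E_k/k!$. Under this hypothesis the product $S_j(k)\sim A^j L^{2k-j}D_j(k)$ with $D_j(k)=\sum_{m_1+\cdots+m_j=k,\,m_i\ge 1}C_{m_1}\cdots C_{m_j}$; inspection of the powers of $L$ in $R_k$ then reveals that only the $j=2$, $S_2(k)$ piece is of dominant order, giving $R_k \sim \frac{d-1}{2d}\,A^d L^{2k-2}D_2(k)$, while the shift $S_2(k-1)$, all higher $j$, and the linear forcing $A^{d-1}W_{k-1}$ carry strictly lower powers of $L$. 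Using $\int A^{d-1}L^{\,r}\,dx = L^{\,r+1}/((d-1)(r+1))$, the integrating-factor integration yields $W_k \sim \frac{D_2(k)}{2d(2k-1)}\,AL^{2k-1}$, so $C_k = D_2(k)/(2d(2k-1))$; rewritten in terms of $E_k=k!C_k$ this is precisely $(2k-1)E_k = (2d)^{-1}\sum_{j=1}^{k-1}\binom{k}{j}E_jE_{k-j}$, with base case $E_1=1/(d-1)$ read off from $W_1$.

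Singularity analysis (Flajolet--Odlyzko transfer theorems) applied to the dominant term $AL^{2k-1}$ gives $[x^n]W_k \sim C_k(d-1)^n n^{(2-d)/(d-1)}\log^{2k-1}n/\Gamma(1/(d-1))$, and similarly $[x^n]\hat{U}\sim (d-1)^nn^{(2-d)/(d-1)}/\Gamma(1/(d-1))$. Since $\partial_y^k\hat{T}\big|_{y=1}=k!\,W_k$, I obtain $\mathbb{E}[(|\mathcal{C}_n|)_k] = k!\,[x^n]W_k/[x^n]\hat{U} \sim E_k\log^{2k-1}n$, and the identity $|\mathcal{C}_n|^k-(|\mathcal{C}_n|)_k = O(|\mathcal{C}_n|^{k-1})$ transfers the asymptotic from the falling factorial to $\mathbb{E}[|\mathcal{C}_n|^k]$, since moments of one lower order contribute only $O(\log^{2k-3}n)$. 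The main technical obstacle will be the full singular-expansion bookkeeping in the induction: I need to verify that each $W_k$ admits a uniform expansion of the form $\sum_{i,j}c_{k,i,j}A^iL^j$ on a $\Delta$-domain at $1/(d-1)$ with a unique dominant term $C_kAL^{2k-1}$, so that the sub-leading pieces of $W_1,\dots,W_{k-1}$ generated inside $R_k$ cannot perturb the leading coefficient after the transfer theorem is applied.
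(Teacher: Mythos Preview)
Your proposal is correct and follows essentially the same route as the paper: both set up the PDE $\partial_x \hat{T} = y(A+\hat{T}/d)^d$ (the paper writes this as $\partial_x R = u\,f(R)$ with $f(y)=(1+py+(1-p)B)^d$), differentiate in the $y$-direction to obtain a chain of linear first-order ODEs with integrating factor $A^{-1}$, observe that only the quadratic ($j=2$) contribution is of leading logarithmic order, and then apply a Flajolet--Odlyzko transfer to the singular term $A\,L^{2k-1}$. The only cosmetic difference is that you expand $\hat{T}$ in Taylor coefficients $W_k=R_k/k!$ and use the binomial theorem on $(A+W/d)^d$, whereas the paper works with $R_k=\partial_y^k\hat{T}|_{y=1}$ and invokes Fa\`a di Bruno with the partial Bell polynomials $B_{k,j}$; the two bookkeeping devices are equivalent, and the technical obstacle you flag (uniform control of the subdominant terms on a slit domain) is exactly what the paper's Lemma~\ref{lem:crit} records via its analyticity statement and $O$-bound.
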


In the case $\alpha > -p$, a similar limiting distribution $\mathcal{C}$ to that found in Theorem \ref{thm:rootclustera>0} exists.

\begin{theorem}\label{thm:rootclusterbsup}
Let $\alpha = -1/d$, where $d\geq 2$ is a positive integer, and let $p > -\alpha = 1/d$. Then 
\[ \frac{|\mathcal{C}_n|}{n^{(pd-1)/(d-1)}} \xrightarrow{d} \mathcal{C},\]
where $\mathcal{C}$ has integer moments 
\[ \mathbb{E}[\mathcal{C}^{k}] =  \frac{D_k\Gamma(1/(d-1))}{\Gamma((kpd - k + 1)/(d-1))} ,\]
where $D_k$ satisfies the recursion $D_1 = 1/(pd-1)$ and 
\[ (k-1)(pd-1)D_k = \sum_{j=2}^{\min\{k,d\}} \frac{p^j d!}{(d-j)!} B_{k,j}(D_1, \ldots, D_{k-j+1}).\]
\end{theorem}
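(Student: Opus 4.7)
The plan is to mirror the analytic-combinatorial argument behind Theorem \ref{thm:rootclustera>0}, adapted to $d$-ary increasing trees. I would introduce the bivariate exponential generating function $C(z,u) = \sum_{n \geq 1} \frac{z^n}{n!}\mathbb{E}[u^{|\mathcal{C}_n|}]$. A random increasing $d$-ary tree is a simple variety of increasing trees whose total generating function $T(z)$ satisfies a standard ODE of the form $T'(z) = \varphi(T(z))$ determined by the $d$-slot branching rule. Conditioning on a vertex being in the root cluster, each of its $d$ slots is independently either empty, occupied by a non-cluster child (weight $1-p$, followed by an arbitrary increasing $d$-ary subtree governed by $T$), or occupied by a cluster child (weight $p$, continuing recursively). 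This specification yields a first-order differential equation relating $\partial_z C(z,u)$ to itself and to $T(z)$, with $u$ marking cluster size.

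Next I would extract moments by differentiating in $u$ at $u=1$. Setting $M_k(z) = \partial_u^k C(z,u)\bigl|_{u=1}$, differentiating $k$ times and applying Leibniz's rule to the $d$-fold product on the right produces, for each $k \geq 2$, a linear ODE for $M_k$ whose inhomogeneity is a polynomial in $M_1,\dots,M_{k-1}$. By Faà di Bruno's formula, that polynomial is exactly the partial Bell polynomial $B_{k,j}(M_1,\dots,M_{k-j+1})$ summed over $j \leq \min\{k,d\}$; the cut-off $j \leq d$ arises because a $d$-fold product cannot mix more than $d$ distinct terms at once, which is the source of the $\min\{k,d\}$ bound in the stated recursion.

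The third step is singularity analysis. Since $p > 1/d$ is equivalent to $pd-1 > 0$, the singular exponent of $M_k$ at the dominant singularity $z_d$ of $T(z)$ has sufficient negative strength, and standard transfer theorems à la Flajolet--Odlyzko give
\[ n!\,[z^n]\, M_k(z) \sim \frac{D_k\,\Gamma(1/(d-1))}{\Gamma((kpd-k+1)/(d-1))}\, n^{k(pd-1)/(d-1)}, \]
where $D_k$ is read off by inserting this ansatz into the ODE and matching leading singular coefficients, producing the claimed recursion. The base case $D_1 = 1/(pd-1)$ is checked directly from the asymptotic mean of $|\mathcal{C}_n|$ (which is also implicit in Baur's framework). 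Convergence in distribution of $|\mathcal{C}_n|/n^{(pd-1)/(d-1)}$ to $\mathcal{C}$ then follows by the method of moments, since the candidate limiting moments are of Mittag--Leffler type and therefore satisfy Carleman's condition, so they determine the distribution uniquely.

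The main obstacle will be the rigorous bookkeeping in the second step: isolating the part of the differentiated ODE that is linear in $M_k$, solving it in closed form near $z_d$, and tracking the exact numerical constants (the factors $p^j$ and $d!/(d-j)!$ together with the $\Gamma$-ratio) that emerge from Leibniz's rule on a $d$-fold product. The Bell polynomial structure is combinatorially transparent but the constants require care. A subsidiary technical point is fixing the normalization of $T(z)$ consistently with the $d$-ary increasing-tree setup used in Theorem \ref{thm:rootclusterba.s.} and in Baur's work, so that the singularity $z_d$ and the exponent of $(z_d-z)$ agree across the computation.
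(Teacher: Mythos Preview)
Your proposal is essentially correct and follows the same route as the paper: set up a bivariate exponential generating function, derive a PDE from the $d$-ary branching decomposition, differentiate in $u$ at $u=1$ and use Fa\`a di Bruno to produce linear ODEs for $M_k$ with Bell-polynomial inhomogeneities (the $\min\{k,d\}$ cutoff arising exactly as you describe), extract the leading singular coefficient $D_k$, apply transfer theorems, and conclude by the method of moments.

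The one point where you are too quick is the moment-determinacy step. The moments $D_k\,\Gamma(1/(d-1))/\Gamma((kpd-k+1)/(d-1))$ are not obviously ``of Mittag--Leffler type'': the $D_k$ come from a nonlinear recursion and their growth is not a priori controlled. The paper handles this with a separate lemma showing that the exponential generating function $t(x)=\sum_{k\ge 1}D_k x^k/k!$ satisfies the ODE $xt'(x)=\bigl((1+pt(x))^d-t(x)-1\bigr)/(pd-1)$ and therefore has positive radius of convergence; since $M_k<D_k$ for large $k$, the moment generating function of $\mathcal{C}$ then inherits a positive radius. You should expect to need an argument of this kind rather than invoking Carleman directly.
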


By using the recursion in Theorem \ref{thm:rootclusterbsup}, we calculate the first two moments of $\mathcal{C}$ to be
\begin{align*}
\mathbb{E}[\mathcal{C}] &= \frac{\Gamma\left(\frac{1}{d-1}\right)}{(pd-1)\Gamma\left(\frac{pd}{d-1}\right)}, \\
\mathbb{E}[\mathcal{C}^2] &= \frac{p^2d(d-1)\Gamma\left(\frac{1}{d-1}\right)}{(pd-1)^3\Gamma\left(\frac{2pd-1}{d-1}\right)}.
\end{align*}

When the underlying tree is a binary search tree (when $d=2$), we can once again find a complete description of the limiting distribution. 

\begin{theorem}\label{thm:rootclusterb=2}
Let the underlying tree be a random binary search tree, so $d=2$, and let $p > -\alpha = 1/2$. Then the integer moments of the limiting distribution $\mathcal{C}$ in Theorem \ref{thm:rootclusterbsup} can be written as 
\[  \mathbb{E}[\mathcal{C}^k] = \frac{k!p^{2(k-1)}}{(2p-1)^{2k-1}\Gamma(k(2p-1) + 1)}.\]
\end{theorem}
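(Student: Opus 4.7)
The plan is to prove the closed form for $\mathbb{E}[\mathcal{C}^k]$ by induction on $k$, starting from the recursion for $D_k$ given in Theorem \ref{thm:rootclusterbsup} specialised to $d=2$. Since Theorem \ref{thm:rootclusterbsup} (with $d=2$) reduces $\mathbb{E}[\mathcal{C}^k]$ to $D_k/\Gamma(k(2p-1)+1)$, the target is equivalent to proving
\[ D_k = \frac{k!\,p^{2(k-1)}}{(2p-1)^{2k-1}}. \]

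The first step is to specialise the recursion to $d=2$. Since $\min\{k,d\} = 2$ for all $k\geq 2$, the sum over $j$ collapses to a single term $j=2$, yielding
\[ (k-1)(2p-1)\,D_k = 2p^2\, B_{k,2}(D_1,\ldots,D_{k-1}). \]
Next I would write $B_{k,2}$ explicitly from its combinatorial definition: partitions of $k$ into two (ordered-unordered) index parts give
\[ B_{k,2}(x_1,\ldots,x_{k-1}) = \tfrac{1}{2}\sum_{i=1}^{k-1} \binom{k}{i} x_i x_{k-i}, \]
turning the recursion into a clean binomial convolution
\[ (k-1)(2p-1)\,D_k = p^2 \sum_{i=1}^{k-1}\binom{k}{i} D_i D_{k-i}. \]

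The base case $k=1$ is immediate from $D_1=1/(2p-1)$. For the inductive step, I would substitute the hypothesised formula $D_i = i!\,p^{2(i-1)}/(2p-1)^{2i-1}$ for $i=1,\ldots,k-1$ into the convolution. The factorial in each $D_i D_{k-i}$ cancels exactly with the binomial coefficient, so the summand becomes independent of $i$:
\[ \binom{k}{i} D_i D_{k-i} = \frac{k!\, p^{2(k-2)}}{(2p-1)^{2k-2}}. \]
Summing over $i=1,\ldots,k-1$ produces a factor $k-1$, which cancels the $(k-1)$ on the left, and dividing by $(2p-1)$ yields precisely $D_k = k!\,p^{2(k-1)}/(2p-1)^{2k-1}$, completing the induction. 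The stated formula for $\mathbb{E}[\mathcal{C}^k]$ then follows directly from the relation in Theorem \ref{thm:rootclusterbsup} with $d=2$.

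There is no real obstacle here: the argument is a routine induction whose success rests on the observation that the degree $d=2$ truncation of the recursion of Theorem \ref{thm:rootclusterbsup} involves only $B_{k,2}$, and that the sequence $k!\,p^{2(k-1)}/(2p-1)^{2k-1}$ is preserved by the resulting binomial self-convolution because the factorials perfectly match the binomial weights. The mildly delicate point is only the bookkeeping that the index $j=k/2$ case in the definition of $B_{k,2}$ (when $k$ is even) is correctly absorbed into the symmetric sum $\tfrac{1}{2}\sum_{i=1}^{k-1}\binom{k}{i}x_i x_{k-i}$, which a brief check from the explicit formula for $B_{k,j}$ confirms.
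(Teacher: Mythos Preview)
Your proof is correct. The paper takes a different route: it uses Lemma \ref{lem:Dkpowerseries}, which packages the recursion for $D_k$ into the differential equation
\[ xt'(x) = \frac{(1+pt(x))^d - t(x) - 1}{pd-1} \]
for the exponential generating function $t(x)=\sum_{k\ge 1}D_kx^k/k!$. Specialising to $d=2$ makes this separable; integrating yields the linear functional equation $t(x) = x\bigl(\tfrac{1}{2p-1} + \tfrac{p^2}{(2p-1)^2}t(x)\bigr)$, and Lagrange inversion then reads off $D_k = k!\,p^{2(k-1)}/(2p-1)^{2k-1}$.

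Your argument is more elementary: you work directly with the recursion, observe that $d=2$ truncates the Bell-polynomial sum to the single term $B_{k,2}$, and verify by induction that the proposed formula satisfies the resulting binomial self-convolution (the factorials cancel the binomial coefficients, making each summand independent of $i$). This avoids the generating-function machinery and Lagrange inversion entirely. The paper's approach, on the other hand, parallels its treatment of the $\alpha=1$ case (Theorem \ref{thm:rootclustera=1}) and would extend more readily if one wanted further information about $t(x)$ beyond its Taylor coefficients.
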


\begin{remark}
From Theorem 2.12, there is a positive probability that the size of the root cluster is finite, even in the case $p > 1/d$. For any $p$ and $d$, let $p_\infty$ be the smallest positive solution to 
\[ 1-x = (1-px)^d.\]
It is known that $p_\infty$ is the probability that the cluster $\mathcal{K}_d$ containing the root after performing Bernoulli bond percolation on an infinite $d$-ary tree is infinite (see for example \cite[Exercise 5.41]{LYPE:16}). From Theorem \ref{thm:rootclusterba.s.}, $|\mathcal{C}_n|$ converges almost surely to $|\mathcal{K}_d|$. The limiting random variable $\mathcal{C}$ in Theorem \ref{thm:rootclusterbsup} can be broken down in the following way:
\[ \mathcal{C} = \begin{cases}
0 & \text{with probability } 1- p_\infty, \\
\mathcal{C}_\infty & \text{with probability } p_\infty,
\end{cases}\]
where $C_{\infty}$ has moments 
\[ \mathbb{E}[\mathcal{C}_\infty^k] = p_\infty^{-1}\mathbb{E}[\mathcal{C}^k].\]
When $d=2$, then $p_\infty = (2p-1)/p^2$, and $\mathcal{C}_\infty$ has moments 
\[ \mathbb{E}[\mathcal{C}_\infty^k]= \frac{k!p^{2k}}{(2p-1)^{2k}\Gamma(k(2p-1)+1)},\]
which are the moments of a generalized Mittag-Leffler distribution.
\end{remark}

%-----------------------------------------------%
%---------- PROOFS GLOBAL ----------%
%-----------------------------------------------%
\section{Proofs of global properties}\label{sec:global}

%---------- TOOLS ----------%

We start by summarizing some results on generalized P\'olya urns that will be used throughout this section. 
A generalized P\'olya urn process $(\bm{X}_n)_{n=0}^{\infty}$ is defined as follows. There are $q$ types (or colours) $1,2, \ldots, q$ of balls, and for each vector $\bm{X}_n = (X_{n,1}, X_{n,2}, \ldots, X_{n,q})$, the entry $X_{n,i} \geq 0$ is the number of balls of type $i$ in the urn at time $n$. For each type $i$, an activity $a_i \geq 0$ is assigned, as well as a random vector $\bm{\xi}_i = (\xi_{i,1}, \xi_{i,2}, \ldots, \xi_{i,q})$ such that $\xi_{i,j} \geq 0$ for $i \neq j$ and $\xi_{i,i} \geq -1$. The urn process begins with a given vector $\bm{X}_0$. At time $n \geq 1$, a ball is drawn uniformly at random from the urn, so that the probability that a ball of colour $i$ is chosen is
\[ \frac{a_iX_{n-1,i}}{\sum_{j=1}^q a_jX_{n-1,j}}.\]
If the drawn ball is of type $i$, then we set $\bm{X}_n = \bm{X}_{n-1} + \Delta \bm{X}_n$, where $\Delta \bm{X}_n \sim \bm{\xi}_i$ and is independent of everything that has happened so far. 
The {\em intensity matrix} of the P\'olya urn is the $q \times q$ matrix 
\[ A := \left(a_j\mathbb{E}[\xi_{j,i}]\right)_{i,j =1}^q.\]
Note that while several authors place $\mathbb{E}[\bm{\xi_i}]$ for row $i$ of $A$, we follow the notation of \cite{JANS:04} by placing $\mathbb{E}[\bm{\xi_j}]$ for column $j$ of $A$. As noted in \cite{JANS:04}, since all off diagonal entries of $A$ are non-negative, $A$ has a largest real eigenvalue $\lambda_1$ such that $\lambda_1 > \text{Re}\lambda$ for all other eigenvalues $\lambda$ of $A$. A type $i$ is called {\em dominating} if for all other types $j$, it is possible to find a ball of type $j$ in an urn beginning with a single ball of type $i$. By ordering the types such that every dominating $i$ is smaller than every nondominating type $j$, the matrix $A$ will be a block diagonal matrix. We say that an eigenvalue $\lambda$ of $A$ belongs to the dominating class if it is also an eigenvalue of the submatrix of $A$ restricted to the dominating types. 

The following six assumptions appear in \cite{JANS:04} (the assumption (A1) is a generalization from \cite[Remark 4.2]{JANS:04}, note the indices of the variables in (A1)):
\begin{enumerate}
\item[(A1)] For each $i=1,\ldots,q$, either 
\begin{enumerate}
\item there is a real number $d_i > 0$ such that $X_{0,i}$ and $\xi_{1,i}, \xi_{2,i}, \ldots, \xi_{q,i}$ are multiples of $d_i$ and $\xi_{i,i} \geq -d_i$, or 
\item $\xi_{i,i} \geq 0$.
\end{enumerate}
\item[(A2)] $\mathbb{E}[\xi_{i,j}^2] < \infty$ for all $i,j = 1,\ldots,q$.
\item[(A3)] The largest real eigenvalue $\lambda_1$ of $A$ is positive. 
\item[(A4)] The largest real eigenvalue $\lambda_1$ is simple.
\item[(A5)] There exists a dominating type $i$ with $X_{0,i} > 0$.
\item[(A6)] $\lambda_1$ belongs to the dominating class.
\end{enumerate}
We add the following simplifying assumption
\begin{enumerate}
\item[(A7)] For each $n\geq 1$ there exists a ball of dominating type in the urn.
\end{enumerate}
We further add the following assumption which will make the covariance matrix calculations simpler 
\begin{enumerate}
\item[(A8)] There exists $c>0$ such that $\sum_{i=1}^q a_i \mathbb{E}[\xi_{j,i}] = c$ for every $j=1, \ldots, q$ where $a_j > 0$. 
\end{enumerate}

All vectors $\bm{v}$ for the remainder of this discussion are assumed to be column vectors. Let $\bm{a} = (a_1, \ldots, a_q)^T$ be the vector of activities. Let $\bm{v}_1$ and $\bm{u}_1$ be the right and left eigenvectors associated with $\lambda_1$ normalized such that $\bm{a}^T \bm{v}_1 = 1$ and $\bm{u}_1^T \bm{v}_1 = 1$. 
Order the eigenvalues $\lambda_1, \lambda_2, \ldots, \lambda_q$ such that $\lambda_1 \geq \text{Re}\lambda_2 \geq \text{Re}\lambda_3 \geq \cdots \geq \text{Re}\lambda_q$. If $A$ is diagonalizable, then there are $q$ linearly independent right eigenvectors of $A$ and $q$ linearly independent left eigenvectors of $A$. Let $\bm{v}_i$ and $\bm{u}_i^T$ be dual bases for the eigenspaces of $A$, that is, right and left eigenvectors of $A$ associated with $\lambda_i$ such that $\bm{u}_i^T\bm{v}_j = \delta_{i,j}$ for all $i,j=1,\ldots,q$, where
\[ \delta_{i,j} = 
\begin{cases}
1 & i=j, \\
0 & i\neq j.
\end{cases}\]
Denote $\bm{v}_1 =: (v_{1,1}, v_{1,2}, \ldots, v_{1,q})^T$ 
and define the matrices
\[ B:= \sum_{i=1}^q a_iv_{1,i}\mathbb{E}\left[\bm{\xi}_i\bm{\xi_i}^T\right]\]
and 
\begin{equation*}
 \Sigma_I =  \sum_{j,k=2}^q \frac{\bm{u}_j^T B \bm{u}_k}{\lambda_1 - \lambda_j - \lambda_k}\bm{v}_j \bm{v}_k^T
 \end{equation*}
whenever none of the denominators is equal to zero (which holds in the cases relevant to us). Let $P = I - \bm{v}_1\bm{u}_1$ and 
\begin{equation}\label{eq:Sigma1special}
\Sigma_I^\dagger = \int_0^\infty Pe^{sA}Be^{sA^T}P^Te^{-\lambda_1 s}ds.
\end{equation}
If $\lambda_2$ is real and $\lambda_2 > \text{Re}\lambda_3$, then define the matrix
\begin{equation}\label{eq:Sigma2}
 \Sigma_{II} := (\bm{u}_2^T B \bm{u}_2)\bm{v}_2\bm{v}_2^T.
 \end{equation}

We are now ready to gather results from \cite{JANS:04}. 
\begin{theorem}[Janson 2004, \cite{JANS:04}]\label{thm:urns}
Suppose an urn process $(\bm{X}_n)_{n=0}^{\infty}$ satisfies (A1)-- (A7). The following hold:
\begin{enumerate}
\item[(i)] a strong law of large numbers, \[ \frac{\bm{X}_n}{n} \xrightarrow{a.s.} \lambda_1 \bm{v}_1,\]
\item[(ii)] if (A8) is satisfied and $\lambda_1 > 2\text{Re}\lambda_2$, then
\[ \frac{\bm{X}_n - n\lambda_1 \bm{v}_1}{\sqrt{n}} \xrightarrow{d} \mathcal{N}(\bm{0}, \lambda_1\Sigma_I^\dagger),\]
where $\mathcal{N}$ denotes a multivariate normal distribution, and $\Sigma_I^\dagger$ is defined as in \eqref{eq:Sigma1special}. If $A$ is diagonalizable, then $\Sigma_I^\dagger$ can be replaced with $\Sigma_I$,
\item[(iii)] if (A8) is satisfed, $\lambda_1 = 2 \lambda_2$, $\lambda_2 > \text{Re}\lambda_3$, and $A$ is diagonalizable, then
\[  \frac{\bm{X}_n - n\lambda_1 \bm{v}_1}{\sqrt{n\ln{n}}} \xrightarrow{d} \mathcal{N}(\bm{0},\Sigma_{II}),\]
where $\mathcal{N}$ denotes a multivariate normal distribution, and $\Sigma_{II}$ is defined as in \eqref{eq:Sigma2},
\item[(iv)] if $\lambda_2$ is real, $\lambda_1 < 2\lambda_2$, and $\lambda_1 > 2\text{Re}\lambda_i$ for all $i=3, \ldots, q$, then 
\[ \frac{\bm{X}_n - n\lambda_1 \bm{v}_1}{n^{\lambda_2 / \lambda_1}} \xrightarrow{a.s} \widehat{Z}\bm{v}_2.\]
where $\widehat{Z}$ is a real random variable.
\end{enumerate}
\end{theorem}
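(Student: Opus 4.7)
The plan is to use the Athreya--Karlin continuous-time embedding: assign each ball of type $i$ an independent $\mathrm{Exp}(a_i)$ clock that, when it rings, replaces the ball according to an independent copy of $\bm{\xi}_i$. If $\tilde{\bm{X}}(t)$ is the resulting composition vector and $\tau_n$ the $n$-th jump time, then $\bm{X}_n = \tilde{\bm{X}}(\tau_n)$, and $\tilde{\bm{X}}$ is a continuous-time multitype Markov branching process whose mean satisfies $\tfrac{d}{dt}\mathbb{E}[\tilde{\bm{X}}(t)] = A\,\mathbb{E}[\tilde{\bm{X}}(t)]$, so the asymptotics are governed by the spectrum of $A$.

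For (i), assumptions (A3)--(A6) identify $\lambda_1>0$ as a simple leading eigenvalue of $A$ in the dominating class, with positive eigenvector $\bm{v}_1$ by a Perron--Frobenius argument restricted to that class. The classical Kesten--Stigum theorem for multitype branching processes then yields $e^{-\lambda_1 t}\tilde{\bm{X}}(t) \to W\bm{v}_1$ almost surely, with (A5) and (A7) forcing $W>0$ a.s. A renewal argument applied to the total activity gives $\tau_n = \lambda_1^{-1}\log n + O(1)$ almost surely, which, combined with the previous convergence, transfers into $\bm{X}_n/n \to \lambda_1\bm{v}_1$.

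The fluctuation results (ii)--(iv) come from examining the centred martingale obtained by projecting $\tilde{\bm{X}}(t)-\mathbb{E}\tilde{\bm{X}}(t)$ off the leading eigendirection via $P=I-\bm{v}_1\bm{u}_1^T$ and rescaling by $e^{-tA}$. Its predictable quadratic variation integrates the one-step covariance weighted by $\bm{v}_1$, producing the kernel $B$ and the integral $\int_0^t Pe^{sA}Be^{sA^T}P^T\,ds$. Three regimes emerge by comparing $\lambda_1$ with $2\,\mathrm{Re}\,\lambda_2$: if $\lambda_1 > 2\,\mathrm{Re}\,\lambda_2$, this integral grows like $e^{\lambda_1 t}$ and a martingale CLT gives the $\sqrt{n}$-scale normal limit with covariance $\lambda_1\Sigma_I^\dagger$ (collapsing to $\lambda_1\Sigma_I$ via a spectral sum when $A$ is diagonalisable); if $\lambda_1=2\lambda_2$, the contribution from the $\lambda_2$-eigenspace gains an additional factor of $t$, yielding the $\sqrt{n\log n}$ rescaling with covariance $\Sigma_{II}$; and if $\lambda_1<2\lambda_2$, the single-eigenvector martingale $e^{-\lambda_2 t}\bm{u}_2^T\tilde{\bm{X}}(t)$ is $L^2$-bounded and therefore converges almost surely to a non-normal limit $\widehat{Z}$, giving (iv). Assumption (A8) makes $\bm{a}$ a left eigenvector of $A$ for eigenvalue $c$, which decouples the total activity from the composition and is what makes the explicit covariance formulae in $\Sigma_I^\dagger,\Sigma_I,\Sigma_{II}$ come out clean.

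The main obstacles I anticipate are threefold. First, transferring the continuous-time scaling limits back to the discrete-time urn through $\tau_n$ requires controlling the coupling error at the fluctuation scale; this is done using independence properties of the embedding clocks together with a delta-method expansion around $\tau_n \sim \lambda_1^{-1}\log n$. Second, in (ii) the non-diagonalisable case forces one to retain the integral representation $\Sigma_I^\dagger$ rather than the spectral sum $\Sigma_I$, since Jordan blocks produce polynomial factors in $t$ that obstruct the clean eigenvector expansion. Third, and most delicate, is isolating exactly the $\lambda_2$-eigenspace contribution to the logarithm in (iii); this is where diagonalisability is genuinely used, and where a careful asymptotic analysis of the variance integral near the critical balance $\lambda_1=2\lambda_2$ is required to extract the correct prefactor in $\Sigma_{II}$.
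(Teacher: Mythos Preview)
Your sketch outlines the actual proof strategy of Janson's 2004 paper (the continuous-time branching process embedding, spectral decomposition of $A$, and the three regimes governed by the comparison of $\lambda_1$ with $2\,\mathrm{Re}\,\lambda_2$), and as a high-level description it is broadly correct. The paper, however, takes a very different route: it does not prove this theorem at all but simply cites the relevant results from Janson~\cite{JANS:04} --- specifically Theorems~3.21--3.24 for the four convergences and Lemmas~5.3(i) and~5.4 for the covariance formulae, noting that (A7) guarantees non-extinction and that the proof of Lemma~5.4 goes through verbatim under the slightly weaker hypothesis (A8). The definition $\widehat{Z} = \bm{u}_2^T\widehat{W}$ in (iv) is obtained directly from \cite[Theorem~3.24]{JANS:04}. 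So what the paper expects here is not a reconstruction of Janson's argument but a short mapping of each of (i)--(iv) onto the corresponding statement in~\cite{JANS:04}; your proposal goes well beyond that, and while it is an accurate summary of the underlying machinery, it is not what the paper does.
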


\begin{proof}
The convergence in (i) follows from \cite[Theorem 3.21]{JANS:04} (essential non-existence is always guaranteed if (A7) holds), the convergence in (ii) follows from \cite[Theorem 3.22]{JANS:04}, and the convergence in (iii) follows from \cite[Theorem 3.23]{JANS:04}, while the covariance matrix calculations in (ii) and (iii) follow from \cite[Lemma 5.3(i), Lemma 5.4]{JANS:04}, where we note that the proof of \cite[Lemma 5.4]{JANS:04} follows exactly the same with the slightly more general assumption (A8). The convergence in (iv) follows from \cite[Theorem 3.24]{JANS:04} by letting $\widehat{Z} = \bm{u}_2^T \widehat{W}$ (the random vector $\widehat{W}$ is an element of the eigenspace of $\lambda_2$).
\end{proof}

%------------ WEIGHTS --------%

We are now ready to prove our results of global properties for $\mathcal{T}_n := \mathcal{T}_{\alpha,n}$ with broadcasting induced colouring $\sigma_n := \sigma_{\mathcal{T}_n,p}$. Let $\alpha\deg^+(v) + 1$ be the {\em weight} of the vertex $v$ in $\mathcal{T}_n$. We consider an urn with two colours of balls: red $r$ and blue $b$, both with activity 1. In this urn, the total activity of red and blue balls at time $n$ will correspond to the sum of the total weights of red and blue vertices in the tree $\mathcal{T}_{n}$ with colouring $\sigma_{n}$, respectively. When a ball is picked, with probability $p$ it is replaced with an additional $1 + \alpha$ balls of the same colour; $1$ corresponding to the addition of a new vertex, while the extra $\alpha$ corresponds to the increase in weight of the selected vertex. With probability $1-p$, the chosen ball is replaced along with $\alpha$ balls of the same colour (corresponding to the increase in weight), while an additional $1$ ball of the other colour is added (corresponding to the new vertex added). Let $R^w_n$ and $B^w_n$ be the total activity of red and blue balls respectively at time $n$, which is also the total weight of the vertices of each colour in $\mathcal{T}_n$. We therefore have the following activity matrix for our urn: 
\begin{center}
\begin{tabular}{ccc}
& $\begin{array}{cc} r \,\,\, &\,\,\, b\end{array}$ & \\
 $A =$ & $\left(\begin{array}{cc}
\alpha + p & 1-p \\
1-p & \alpha + p
\end{array} \right) $ & 
$\begin{array}{c} r \\ b \end{array}$
\end{tabular}
\end{center}

This particular P\'olya urn process was previously studied in the context of preferential attachment trees by Baur and Bertoin to study elephant random walks \cite{BABE:16}. The eigenvalues of $A$ are $\lambda_1 = 1+\alpha$ and $\lambda_2 = 2p  + \alpha-1$, while $A$ satisfies (A1)--(A8). Therefore, Theorem \ref{thm:urns} applies with $\bm{v}_1 = (1/2, 1/2), \bm{u}_1 = (1,1), \bm{v}_2 = (1/2, -1/2)$, and $\bm{u}_2 = (1,-1)$. 

We can say something more about the limiting distribution in this case when $2\lambda_2 > \lambda_1$ (so when $p>(3-\alpha)/4$). Recall the random variable $B$ defined in \eqref{eq:rootcolour} ($B = 1$ if the root is red and $B=-1$ if the root is blue, so $B \sim (-1)^{\text{Be}(1/2)}$ where $\text{Be}$ denotes a Bernoulli random variable). 
\begin{proposition}\label{prop:weights}
Let $R_n^w$ and $B_n^w$ be the total weight of red and blue balls respectively, and suppose that $p > (3-\alpha)/4$. Then 
\begin{equation}\label{eq:weights}
 \frac{(R_n^w, B_n^w) - \left(\frac{1}{2}, \frac{1}{2}\right)}{n^{(2p+\alpha -1)/(1+\alpha)}} \xrightarrow{d} BZ\left(\frac{1}{2}, -\frac{1}{2}\right),
\end{equation}
where $Z$ is a real random variable with 
\[ \mathbb{E}[Z] = \frac{\Gamma(1/(1 + \alpha))}{\Gamma((2p + \alpha)/(1 + \alpha))},\]
and 
\[\mathbb{E}[Z^2]  = \frac{\Gamma(1/(1 + \alpha))(1 + \alpha)(4p + \alpha-2)}{\Gamma((4p + 2\alpha -  1)/(1 + \alpha))(4p + \alpha - 3)}.\]
\end{proposition}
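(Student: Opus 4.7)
The plan is to apply Theorem~\ref{thm:urns}(iv) to the urn just described, identify the limit up to a random sign via the red/blue symmetry, and compute the two listed moments from linear and quadratic recursions for $D_n := R_n^w - B_n^w = \bm u_2^T \bm X_n$. All of (A1)--(A8) are straightforward to verify for this urn: $A$ is symmetric (hence diagonalizable) with eigenvalues $\lambda_1 = 1+\alpha$ and $\lambda_2 = 2p+\alpha-1$, the condition $p>(3-\alpha)/4$ is exactly $2\lambda_2 > \lambda_1$, and the right/left eigenvector pairs are $(\bm v_1,\bm u_1)=((1/2,1/2),(1,1))$ and $(\bm v_2,\bm u_2)=((1/2,-1/2),(1,-1))$. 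Theorem~\ref{thm:urns}(iv) then produces a real random variable $\widehat Z$ with
\[
\frac{(R_n^w,B_n^w) - n\lambda_1\bm v_1}{n^{\lambda_2/\lambda_1}} \xrightarrow{a.s.} \widehat Z\,\bm v_2,
\]
which is the asserted convergence. Since the urn dynamics are invariant under swapping red and blue, the conditional law of $\widehat Z$ given $B=-1$ equals the law of $-\widehat Z$ given $B=+1$; taking $Z$ to be the conditional law of $\widehat Z$ given $B=+1$ gives the decomposition $\widehat Z \stackrel{d}{=} BZ$ with $B \perp Z$.

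For $\mathbb E[Z]$, a one-step computation gives $\mathbb E[D_n\mid\mathcal F_{n-1}] = D_{n-1}\bigl(1+\lambda_2/((1+\alpha)(n-1)+1)\bigr)$ regardless of which colour is drawn, so $D_n/a_n$ is a martingale, where
\[
a_n = \prod_{k=0}^{n-1}\frac{(1+\alpha)k+1+\lambda_2}{(1+\alpha)k+1} = \frac{\Gamma(n+(2p+\alpha)/(1+\alpha))\,\Gamma(1/(1+\alpha))}{\Gamma(n+1/(1+\alpha))\,\Gamma((2p+\alpha)/(1+\alpha))} \sim \frac{\Gamma(1/(1+\alpha))}{\Gamma((2p+\alpha)/(1+\alpha))}\,n^{\lambda_2/\lambda_1}
\]
by Stirling. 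Conditional on $B=+1$ we have $D_0=1$, hence $\mathbb E[D_n\mid B=+1] = a_n$; the $L^2$ bound from the next step supplies uniform integrability, so passing to the limit yields $\mathbb E[Z]$ as stated.

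For $\mathbb E[Z^2] = \mathbb E[\widehat Z^2]$, an equally colour-independent calculation gives $\mathbb E[(\Delta D_n)^2\mid\mathcal F_{n-1}] = c := 1-2\alpha+\alpha^2+4p\alpha$, whence
\[
\mathbb E[D_n^2] = \mathbb E[D_{n-1}^2]\bigl(1+2\lambda_2/((1+\alpha)(n-1)+1)\bigr) + c.
\]
Writing $b_n := \prod_{k=0}^{n-1}((1+\alpha)k+1+2\lambda_2)/((1+\alpha)k+1)$, this rearranges to $\mathbb E[D_n^2]/b_n = 1 + c\sum_{k=1}^n 1/b_k$. The decisive step is a closed form for $\sum_{k\ge 1}1/b_k$: with $a=1/(1+\alpha)$ and $c_0 := (1+2\lambda_2)/(1+\alpha)$, one has $c_0-a-1 = (4p+\alpha-3)/(1+\alpha) > 0$ under the hypothesis, and the telescoping identity
\[
\frac{\Gamma(k+a)}{\Gamma(k+c_0)} = \frac{1}{c_0-a-1}\left(\frac{\Gamma(k+a)}{\Gamma(k+c_0-1)} - \frac{\Gamma(k+1+a)}{\Gamma(k+c_0)}\right)
\]
gives $\sum_{k\ge 1}1/b_k = 1/(4p+\alpha-3)$. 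Combining with Stirling for $b_n$ and the algebraic identity $c+(4p+\alpha-3) = (1+\alpha)(4p+\alpha-2)$ yields the claimed formula.

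The main obstacle is the second-moment step: setting up the correct quadratic recursion for $\mathbb E[D_n^2]$, finding the Gamma-function telescoping that evaluates $\sum 1/b_k$ in closed form, and performing the final algebraic consolidation that collapses the answer into the compact form stated in the proposition. The remaining pieces (verification of the urn hypotheses, the symmetric decomposition, and the first-moment martingale) are routine once this is in place.
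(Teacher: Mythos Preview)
Your proof is correct but takes a genuinely different route from the paper's. The paper obtains the convergence from Theorem~\ref{thm:urns}(iv) exactly as you do, but for the moment computations it appeals directly to the general machinery in \cite{JANS:04}: specifically, it invokes \cite[Theorems~3.10 and~3.26]{JANS:04} to write down an equation for $\sigma_1^2 = \operatorname{Var}[Z_1]$ (where $Z_1$ is the limit random variable in the continuous-time embedding started from a single red ball), solves that linear equation, and then converts to $\mathbb{E}[Z]$ and $\mathbb{E}[Z^2]$ via \cite[Eq.~3.21]{JANS:04}. By contrast, you work entirely in discrete time: you set up the linear recursion for $\mathbb{E}[D_n]$ to get a martingale and product formula for $a_n$, then the quadratic recursion for $\mathbb{E}[D_n^2]$, and evaluate the resulting series $\sum_{k\ge 1}1/b_k$ in closed form by a Gamma-ratio telescoping identity. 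Your approach is more elementary and entirely self-contained (no need to chase into \cite{JANS:04}), at the cost of being tailored to this specific two-type urn; the paper's approach is shorter on the page but leans on several black-box results. One small point worth making explicit in your write-up: the passage from $\lim_n \mathbb{E}[D_n^2]/n^{2\lambda_2/\lambda_1}$ to $\mathbb{E}[\widehat Z^2]$ goes through because $D_n/a_n$ is an $L^2$-bounded martingale (your computation shows $\mathbb{E}[D_n^2]/a_n^2$ is bounded since $b_n/a_n^2$ tends to a finite constant), hence converges in $L^2$ as well as almost surely.
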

\begin{proof}
First, suppose we always start with a red root (so start the urn with a red ball). Then the convergence in \eqref{eq:weights} with $B=1$ follows from \ref{thm:urns} (iv). For the calculation of the expected value and the variance (again assuming we start with a red ball), we appeal to \cite[Theorem 3.10, Theorem 3.26]{JANS:04}.  The random variable $Z_1 =  \bm{u}_2^TW_{\lambda_2,1}$ corresponds in this case to starting with a single ball of colour $r$, and $Z_2 = \bm{u}_2^TW_{\lambda_2, 2}$ corresponds to starting with a single ball of colour $b$. The expected value of $Z_1$ is the first component of $u_2$. By symmetry, $\sigma_1^2 = \textrm{Var}[Z_1] = \textrm{Var}[Z_2] = \sigma_2^2$, and so 
\begin{align*}
(2\lambda_2 - (\alpha+p) - (1-p))\sigma_1^2 &= \lambda_2^2 + \mathbb{E}[(\bm{u}_2^T\xi_1)^2] - (\bm{u}_2^T\mathbb{E}[\xi_1])^2 \\
&= p(1+\alpha)^2 + (1-p)(\alpha - 1)^2.
\end{align*}
Rearranging for $\sigma_1^2$ and adding $(\mathbb{E}[Z_1])^2 = 1$ gives 
\[ \mathbb{E}[Z_1^2] = \frac{(1+\alpha)(4p+\alpha - 2)}{4p + \alpha -3}.\]
Then by applying \cite[Eq. 3.21]{JANS:04}, we get 
\[ \mathbb{E}[Z] = \frac{\Gamma(1/(1+\alpha))}{\Gamma((2p+\alpha)/(1+\alpha))}\mathbb{E}[Z_1] =  \frac{\Gamma(1/(1+\alpha))}{\Gamma((2p+\alpha)/(1+\alpha))},\]
and 
\[ \mathbb{E}[Z^2] = \frac{\Gamma(1/\lambda_1)}{\Gamma((1 + 2\lambda_2)/\lambda_1)}\mathbb{E}[Z_1^2] = \frac{\Gamma(1/(1 + \alpha))(1 + \alpha)(4p + \alpha-2)}{\Gamma((4p + 2\alpha -  1)/(1 + \alpha))(4p + \alpha - 3)}.\]
Next we multiply by $B$ since the urn starts with a single red ball with probability $1/2$, and a single blue ball with probability $1/2$. 
\end{proof}

While a P\'olya urn can be used to study the number of vertices of each colour, a simpler proof follows from limit laws for the number of clusters (maximal monochromatic subtrees) of each colour. We therefore start with studying the clusters in $\mathcal{T}_n$. 

%---------- CLUSTERS ----------%

\begin{proof}[Proof of Theorem \ref{thm:clusters}]
Consider an urn with four colours of balls: $r, b$ with activity 1, and $r^c, b^c,$ with activity 0. Let $R^w_n, B^w_n$ be the total number of balls (and so the total activity of the balls) of colour $r, b$, respectively, and let $R^c_n, B^c_n$ be the number of balls of colour $r^c, r^b$ respectively. As in the urn above, the balls $r$ and $b$ represent the weights of the red and blue vertices in $\mathcal{T}_{n}$ with colouring $\sigma_{\mathcal{T}_{n}}$. The balls of colours $r^c$ and $b^c$ represent clusters of colour red and blue respectively. 
We start the urn with a ball of colour $r$ and a ball of colour $r^c$ if the root is red, and a ball of colour $b$ and a ball of colour $b^c$ if the root is blue.
Therefore the number of red and blue clusters at time $n$ is exactly $R_n^c$ and $B_n^c$ respectively. 

For example in Figure \ref{fig:clusters}, there are 7 red clusters and 5 blue clusters, so $R_n^c =7$ and $B_n^c = 5$. Each vertex $v$ contributes $\alpha \deg^+(v) + 1$ to the total weight of its colour. Summing over all red vertices yields $R_n^w = 13 + 11\alpha$ and summing over all blue vertices yields $B_n^w = 10 + 11\alpha$. 

%-------- BEGIN CLUSTER FIGURE ---------%
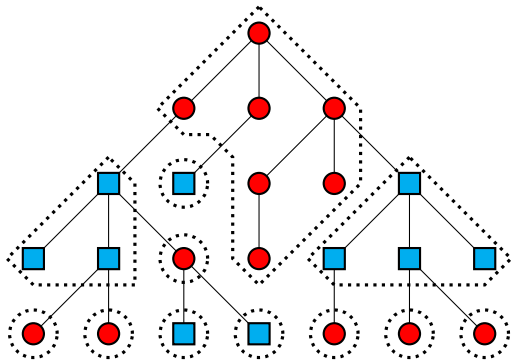
\begin{figure}[h!]
\centering
\begin{tikzpicture}[
redv/.style={circle, inner sep=0pt, draw=black, fill=red, thick, minimum size=8pt},
bluv/.style={rectangle, inner sep=0pt, draw=black, fill=cyan, thick, minimum size=8pt}
]

%EDGES
\draw (5,4) -- (4,3) -- (3,2) -- (2,1) ;
\draw (5,4) -- (6,3) -- (7,2) -- (8,1);
\draw (5,4) -- (5,3) -- (4,2);
\draw (6,3) -- (5,2) -- (5,1);
\draw (6,3) -- (6,2);
\draw (3,2) -- (3,1) -- (2,0);
\draw (3,2) -- (4,1) -- (4,0);
\draw (7,2) -- (6,1) -- (6,0);
\draw (7,2) -- (7,1) -- (7,0);
\draw (3,1) -- (3,0);
\draw (4,1) -- (5,0);
\draw (7,1) -- (8,0);

%VERTICES
\node[redv] at (5,4){};

\node[redv] at (4,3){};
\node[redv] at (5,3){};
\node[redv] at (6,3){};

\node[bluv] at (3,2){};
\node[bluv] at (4,2){};
\node[redv] at (5,2){};
\node[redv] at (6,2){};
\node[bluv] at (7,2){};

\node[bluv] at (2,1){};
\node[bluv] at (3,1){};
\node[redv] at (4,1){};
\node[redv] at (5,1){};
\node[bluv] at (6,1){};
\node[bluv] at (7,1){};
\node[bluv] at (8,1){};

\node[redv] at (2,0){};
\node[redv] at (3,0){};
\node[bluv] at (4,0){};
\node[bluv] at (5,0){};
\node[redv] at (6,0){};
\node[redv] at (7,0){};
\node[redv] at (8,0){};

\draw[dotted, very thick] (5, 4.35) -- (3.65,3) -- (4,2.65) -- (4.35, 2.65) -- (4.65, 2.35) -- (4.65, 1) -- (5, 0.65) -- (6.35, 2) -- (6.35, 3) -- (5, 4.35);
\draw[dotted, very thick] (7,2.35) -- (5.65, 1) -- (6, 0.65) -- (8, 0.65) -- (8.35, 1) -- (7, 2.35);
\draw[dotted,  very thick] (3, 2.35) -- (1.65, 1) -- (2, 0.65) -- (3.35, 0.65) -- (3.35, 2) -- (3, 2.35);

\node[circle,draw, dotted, very thick,minimum size = 18pt]  at (4,2){};
\node[circle,draw, dotted, very thick,minimum size = 18pt]  at (4,1){};
\node[circle,draw, dotted, very thick,minimum size = 18pt]  at (2,0){};
\node[circle,draw, dotted, very thick,minimum size = 18pt]  at (3,0){};
\node[circle,draw, dotted, very thick,minimum size = 18pt]  at (4,0){};
\node[circle,draw, dotted, very thick,minimum size = 18pt]  at (5,0){};
\node[circle,draw, dotted, very thick,minimum size = 18pt]  at (6,0){};
\node[circle,draw, dotted, very thick,minimum size = 18pt]  at (7,0){};
\node[circle,draw, dotted, very thick,minimum size = 18pt]  at (8,0){};

\end{tikzpicture}
\caption{A tree $\mathcal{T}_{23}$ with broadcasting induced colouring $\sigma_{23}$ with the clusters identified.}
\label{fig:clusters}
\end{figure}
%-------- END CLUSTER FIGURE ---------%

If a red vertex is chosen at step $n$, this corresponds to choosing a ball of colour $r$. Then with probability $p$ it is replaced with an additional $1+\alpha$ balls of colour $r$, just as above. With probability $1-p$ however, the ball is replaced along with $\alpha$ balls of colour $r$ along with 1 ball of colour $b$ (just as above), with an addition ball of colour $b^c$ added representing the new blue cluster that is formed. Therefore, we have 
\[ \mathbb{E}[\xi_r] = (\alpha + p, 1-p, 0, 1-p)^T,\]
the first column in the intensity matrix $A^c$ for this urn. The symmetric argument for balls of colour $b$ holds, contributing to the second column of $A^c$. Balls of colour $r^c$ and $b^c$ have activity 0, and so the intensity matrix for this urn is 
\begin{center}
\begin{tabular}{ccc}
&$\begin{array}{cccc}\,\,\,\,\,\,\,\,\, r\,\,\,\, & \,\,\,\,\,\,b \,\,& \,\,r^c &b^c \end{array}$ & \\
$A^c =$ & $ \left(\begin{array}{cccc}
\alpha + p & 1-p & 0 & 0 \\
1-p & \alpha + p & 0 & 0 \\
0 & 1-p & 0 & 0 \\
1-p & 0 & 0 & 0 
\end{array}\right)$ & $\begin{array}{c} r \\ b \\ r^c \\ b^c  \end{array}$
\end{tabular}
\end{center}

The eigenvalues of $A^c$ are $\lambda_1 = 1 + \alpha, \lambda_2 = 2p+\alpha-1, \lambda_3 = \lambda_4 =0$. We see that the assumptions (A1)--(A8) hold. The matrix $A^c$ is diagonalizable when $\alpha \neq 1-2p$, and a dual basis for the eigenspaces of $A$ in this case is given by 
\begin{align*}
\bm{v}_1 &= \frac{1}{2}\left(1, 1, \frac{1-p}{1+\alpha}, \frac{1-p}{1+\alpha}\right),\\
\bm{v}_2 &= \frac{1}{2}\left(1, -1, \frac{p-1}{2p + \alpha - 1}, \frac{1-p}{2p+\alpha -2}\right),\\
\bm{v}_3 &= \frac{1}{(1+\alpha)(2p + \alpha -1)}(0,0,1,0),\\
\bm{v}_4 &= \frac{1}{(1+\alpha)(2p + \alpha -1)}(0,0,0,1),\\
\bm{u}_1 &= (1,1,0,0), \\
\bm{u}_2 &= (1,-1,0,0),\\
\bm{u}_3 &= \left( (1-p)^2, (p-1)(\alpha + p), (1+\alpha)(2p+\alpha -1), 0\right)\\
\bm{u}_4 &= \left( (p-1)(\alpha+p), (1-p)^2, 0, (1+\alpha)(2p+\alpha -1)\right).
\end{align*}

 We can therefore apply Theorem \ref{thm:urns}. Using {\sc Mathematica}, the covariance matrices for the limiting distribution for this urn when $1+\alpha = \lambda_1 > 2\lambda_2 = 4p-2 + 2\alpha$ and $1+\alpha = \lambda_1 = 2\lambda_2 = 4p-2 + 2\alpha$ are calculated (see Appendix \ref{App:clusters}). If $\alpha = 1-2p$ (and so $A^c$ is not diagonalizable), then $1 + \alpha =\lambda_1> 2\lambda_2 = 4p - 2 + 2\alpha$ (since $\alpha > -1$), and the calculation for $\Sigma_1^\dagger$ from \eqref{eq:Sigma1special} yields the same result as the calculation for $\Sigma_I$. When $1+\alpha = \lambda_1 < 2\lambda_2 = 4p-2 + 2\alpha$, we conclude from Theorem \ref{thm:urns} (iv) that $n^{(2p+\alpha-1)/(1+\alpha)}(R_n^w, B_n^w, R_n^c, B_n^c)$ converges to $\widehat{Z}\bm{v}_2$, for some random variable $\widehat{Z}$. If we restrict to $R_n^w$ and $B_n^w$, we see that $\widehat{Z}$ is the same random variable $BZ$ as in \eqref{eq:weights}. Restricted to $R_n^c$ and $B_n^c$, the results of Theorem \ref{thm:clusters} follow. 
\end{proof}

%--------- VERTICES ----------%

A similar urn process to the one above (with balls of activity 0 representing the vertices) can be be used to find limit laws for the number of vertices of each colour. But we can instead use the following observation: if a vertex of one colour contributes to the weight of another vertex of a different colour, then it must be the root of a cluster. Therefore, from the previous proof, we can now derive convergence for the number of vertices of each colour. 

\begin{proof}[Proof of Theorem \ref{thm:vts}]
If we consider again the urn in the previous proof, we can recover the number of vertices $R_n$ and $B_n$ of each colour in our tree. Each red vertex contributes $(1+\alpha)$ to the value $R^w_n$, except those that are roots of red clusters; these contribues 1 to $R^w_n$. The root of a blue cluster contributes $\alpha$ to the weight of its parent, and so $\alpha$ to $R^w_n$. The only root of a blue cluster that does not contribute to $R_n^w$ is the root of $\mathcal{T}_n$ if this root is blue. Using $B$ defined in \eqref{eq:rootcolour}, we see that $R_n^w = (1+\alpha)R_n - \alpha R_n^c + \alpha\left(B_n^c + \frac{B-1}{2}\right)$. Performing the symmetric analysis for $B_n^w$ and rearranging gives 
\begin{align*}
R_n &= \frac{R_n^w + \alpha R_n^c - \alpha\left(\frac{B-1}{2}\right)}{1 + \alpha}, \\
B_n &= \frac{B_n^w + \alpha B_n^c + \alpha \left(\frac{B+1}{2}\right)}{1 + \alpha}.
\end{align*}
When scaled by $\sqrt{n}, \sqrt{\ln n}, $ or $n^{(2p+\alpha-1)(1+\alpha)}$, the last term of each of the equations above vanishes. By the Cram\'er-Wold Theorem, since $R^c_n, B^c_n, R^w_n, B^w_n$ converge jointly in distribution so do linear combinations of these random variables. The limiting distributions are also normal when $\lambda_1 \geq 2 \lambda_2$, and the covariance matrices can be calculated from the covariance matrices in Appendix \ref{App:clusters}. 

As discussed in Remark \ref{rem:vts,p=1/2}, we can treat the special case when $p=1/2$ directly since the number of red vertices is simply given by $R_n = \sum_{i=1}^n X_i$ where $X_i \sim \text{Be}(1/2)$ are independent Bernoulli random variables. Then we can apply the central limit theorem to get 
\[ \frac{R_n - \frac{n}{2}}{\sqrt{n}} \xrightarrow{d} \mathcal{N}\left( 0, \frac{1}{4}\right).\]
A multivariate normal limit law for the number of red and blue vertices follows since $B_n = n - R_n$ and 
\[ a\left(\frac{R_n - \frac{n}{2}}{\sqrt{n}}\right) + b\left(\frac{B_n - \frac{n}{2}}{\sqrt{n}}\right) = (a-b)\left(\frac{R_n - \frac{n}{2}}{\sqrt{n}}\right)\]
converges in distribution to a normal distribution for all $a,b \in \mathbb{R}$, so the Cram\'er-Wold theorem applies. Finally, a quick calculation shows that $\text{Cov}(R_n, B_n) = -\text{Var}(R_n)$, implying the convergence in Theorem \ref{thm:vts}(ii) when $p=1/2$. 
\end{proof}

We turn now to the number of leaves of each colour. 

%---------- LEAVES ----------%

\begin{proof}[Proof of Theorem \ref{thm:leaves}]
Consider an urn with four colours of balls: $r^l, b^l, r^u, b^u$, each with activity 1. Let $R_n^l, B_n^l, R_n^u, B_n^u$ be the total numbers of the balls of colour $r^l, b^l, r^u, b^u$ respectively at time $n$. The balls of colour $r^l$ and $b^l$ represent red and blue leaves respectively. The other balls represent the remaining weights of the red and blue vertices respectively. 

If a red leaf is chosen at step $n$, this corresponds to choosing a ball of colour $r^l$. Then with probability $1-p$ it is removed and replaced with one ball of colour $b^l$ for the new blue leaf that is added, and $1 + \alpha$ balls of colour $r^u$, representing the weight of the now non-leaf vertex that was chosen. With probability $p$, the ball is placed back in the urn for the new red leaf that was added, along with $1+\alpha$ balls of colour $r^u$, representing the weight of the now non-leaf vertex that was chosen. Therefore, we have 
\[ \mathbb{E}[\xi_{r^l}] = (p-1, 1-p, \alpha + 1, 0)^T,\]
the first column of the intensity matrix $A^l$ for this urn. If a red vertex that is not a leaf is chosen, then an additional $\alpha$ balls of colour $r^u$ are added (for the increase in weight of that vertex), along with either one ball of colour $r^l$ with probability $p$, or one ball of colour $b^l$ with probability $1-p$. Therefore, we have 
\[ \mathbb{E}[\xi_{r^u}] = (p, 1-p, \alpha, 0)^T,\]
the third column of $A^l$. The symmetric arguments hold when balls of colour $b^l$ or $b^u$ are chosen. Therefore, the intensity matrix for this urn is 
\begin{equation}\label{eq:matrixleaves}
\begin{array}{ccc}
& \begin{array}{cccc} r^l \,\,\,\, & \,\,\,\, b^l \,\,\,\, & \,\,\,\, r^u \,\,\,\, & \,\,\,\, b^u \end{array} & \\
A^l= & \left(\begin{array}{cccc}
p-1 & 1-p & p & 1-p \\
1-p & p-1 & 1-p & p \\
\alpha+1 & 0 & \alpha & 0 \\
0 & \alpha+1 & 0 & \alpha
\end{array}\right) & 
\begin{array}{c}r^l \\ b^l \\ r^u \\ b^u \end{array}
\end{array}
\end{equation}
We see immediately that assumptions (A1) -- (A8) hold. The eigenvalues of $A$ are $\lambda_1 = 1+\alpha, \lambda_2 = 2p-1 + \alpha, \lambda_3 = \lambda_4 =-1.$ The matrix $A^l$ is diagonalizable when $\alpha \neq -2p$, and a dual basis for the eigenspaces of $A$ in this case is given by
\begin{align*}
\bm{v}_1 &= \frac{1}{4 + 2\alpha}(1, 1, 1+\alpha, 1+\alpha), \\
\bm{v}_2 &= \frac{1}{2\alpha + 4p}(2p-1, 1-2p, 1+\alpha, -(1 + \alpha)), \\
\bm{v}_3 &= \frac{1}{(2+\alpha)(\alpha +2p)}(1,0,-1,0), \\
\bm{v}_4 &= \frac{1}{(2+\alpha)(\alpha + 2p)}(0,1,0,-1), \\
\bm{u}_1 &= (1,1,1,1), \\
\bm{u}_2 &= (1,-1,1,-1) ,\\
\bm{u}_3 &= \left((1 + \alpha)(1 + \alpha + p), (1 + \alpha)(p-1), 1 - (3 + \alpha)p, (1 + \alpha)(p-1)\right) ,\\
\bm{u}_4 &= \left((1 + \alpha)(p-1), (1 + \alpha)(1 + \alpha + p), (1 + \alpha)(p-1), 1 - (3 + \alpha)p\right). 
\end{align*}
We can therefore apply Theorem \ref{thm:urns}. Using {\sc Mathematica}, the covariance matrix for the limiting distribution for this urn when $1+\alpha = \lambda_1 > 2\lambda_2 = 4p-2 + 2\alpha$ and $1+\alpha = \lambda_1 = 2\lambda_2 = 4p-2 + 2\alpha$ are calculated (see Appendix \ref{App:leaves}).  If $\alpha = -2p$, then $1 + \alpha =\lambda_1 > 2\lambda_2 = 4p - 2 + 2\alpha$ (since $\alpha > -1$), and the calculation for $\Sigma_1^\dagger$ from \eqref{eq:Sigma1special} yields the same result as the calculation for $\Sigma_I$. When $1+\alpha = \lambda_1 < 2\lambda_2 = 4p-2 + 2\alpha$, the limiting distribution depends on the colour of the root vertex, so just as in \eqref{eq:weights}, we multiply by the random variable $B$ defined in \eqref{eq:rootcolour}. Notice also that $R_n^l + R_n^u = R_n^w$ and $B_n^l + B_n^u = B_n^w$, and so from the Cram\'er-Wold Theorem and the uniqueness of limits in distribution, the random variable $Z$ achieved from Theorem \ref{thm:urns}(iv) is identical to the random variable $Z$ in~\eqref{eq:weights}. 

When $p=1/2$, the colour of a newly added vertex does not depend on the colour of its parent. In this case, consider an urn with three colours of balls: $r^l, b^l, v^u$, each with activity 1. The balls of colour $r^l$ and $b^l$ represent red and blue leaves respectively, while $v^u$ represents the remaining weights of all non-leaf vertices. Performing a similar analysis as above, we get the following intensity matrix for this urn:
\begin{equation}\label{eq:matrixleavesspecial}
 \begin{array}{ccc}
& \begin{array}{ccc} \,\,\,\,\, r^l \,\,\,\,\, & \,\,\, b^l \,\,\,\,\, & \,\,v^u\end{array} & \\
A^l = & \left( \begin{array}{ccc}
-\frac{1}{2} & \frac{1}{2} & \frac{1}{2} \\
\frac{1}{2} & -\frac{1}{2} & \frac{1}{2} \\
\alpha + 1 & \alpha + 1 & \alpha
\end{array}\right) & \begin{array}{c} r^l \\ b^l \\ v^u \end{array}
\end{array}
\end{equation}

The eigenvalues of $A$ are $\lambda_1 = 1 + \alpha$, $\lambda_2 = \lambda_3 = -1$, and the matrix is diagonalizable for all valid values of $\alpha$.
A dual basis for the eigenspaces of $A$ is given by 
\begin{align*}
\bm{v}_1 &= \frac{1}{4 + 2\alpha}(1, 1, 2 + 2\alpha), \\
\bm{v}_2 &= \frac{1}{4 + 2\alpha}(1, -3-2\alpha, 2 + 2\alpha),\\
\bm{v}_3 &= \frac{1}{4 + 2\alpha}(1, 1, -2),\\
\bm{u}_1 &= (1,1,1), \\
\bm{u}_2 &= (1,-1,0),\\
\bm{u}_3 &= (2(1 + \alpha), 0, -1).
\end{align*}
Once more, assumptions (A1) -- (A8) hold. By looking at the eigenvalues of $A$, we see immediately that Theorem \ref{thm:urns} (ii) applies. The covariance matrix for this case is included in Appendix \ref{App:leaves}. 

Restricted to $R_n^l$ and $B_n^l$, the results of Theorem \ref{thm:leaves} follow. 
\end{proof}

%---------- FRINGE SUBTREES ----------%

The proof of Theorem \ref{thm:fringe} follows much the same way as the proof of \cite[Theorem 3.9]{HOJS:17}. Consider a partial ordering $\preccurlyeq$ on the set of all pairs $(T, \varsigma)$, where $T$ is a rooted tree and $\varsigma$ is a two-colouring of the vertices, such that $(T_1, \varsigma_1) \preccurlyeq (T_2, \varsigma_2)$ if $T_1$ is a subtree of $T_2$ (preserving the root) and $\left.\varsigma_2\right|_{T_1} = \varsigma_1$. Let $S = \{(T_1, \varsigma_1), \ldots, (T_q, \varsigma_q)\}$ such that if $(T,\varsigma) \in S$ and $(T', \varsigma') \preccurlyeq (T,\varsigma)$, then $(T', \varsigma') \in S$. Assume that the pairs $(T_1, \varsigma_1), \ldots, (T_q, \varsigma_q)$ are indexed so that if $(T_i, \varsigma_i) \preccurlyeq (T_j, \varsigma_j)$ then $i < j$, and assume that $(T_1, \varsigma_1)$ corresponds to a single red vertex, and $(T_2, \varsigma_2)$ corresponds to a single blue vertex. We define an urn such that
for the tree $\mathcal{T}_{n}$ with colouring $\sigma_n$, if a vertex $v$ is the root of a fringe subtree $T$ isomorphic to $T_i$ with $\left.\sigma_n\right|_{T} = \varsigma_i$ for which $(T_i, \varsigma_i) \in S$ and if $v$ does not belong to another fringe subtree $T'$ isomorphic to $T_j$ with $\left. \sigma_n\right|_{T'} = \varsigma_j$ such that $(T_i, \varsigma_i)\preccurlyeq (T_j, \varsigma_j) \in S$, then $v$ is represented in the urn by the ball of type $i$. 
If $v$ is not the root of a fringe subtree isomorphic to a tree with colouring in $\mathcal{S}$, then $v$ is represented by $\alpha\deg^+(v) + 1$ balls of special type $\ast_r$ if $v$ is red, and $\ast_b$ if $v$ is blue. Let $Y_n^i$ be the number of balls of type $i$ at time $n$, and let $Y_n^{\ast_r}$ and $Y_n^{\ast_b}$ be the number of balls of special type $\ast_r$ and $\ast_b$ respectively at time $n$, and let $\bm{Y}_n = (Y_n^1, \ldots, Y_n^q, Y_n^{\ast_r}, Y_n^{\ast_b})$.

For example, consider $S = \{(T_1, \varsigma_1), \ldots, (T_6, \varsigma_6)\}$, where $(T_i, \varsigma_i)$ are identified on the right side of Figure \ref{fig:fringesubtrees}. A tree $\mathcal{T}_{23}$ with colouring $\sigma_{23}$ is given in Figure \ref{fig:fringesubtrees}. Then the urn we consider will contain two balls of type 1, two balls of type 2, one ball of type 3, one ball of type 4, one ball of type 5, and two balls of type 6. There are a further $7\alpha + 4$ balls of type $\ast_r$ for the remaining red vertices, and $6\alpha + 2$ balls of type $\ast_b$ for the remaining blue vertices. Note that only two red leaves contribute balls of type 1, since the remaining red leaves are subtrees of fringe subtrees isomorphic to $(T_4, \sigma_4)$ or $(T_6, \sigma_6)$. 

%---------- BEGIN FRINGE FIGURE --------%
\begin{figure}[h!]
\centering
\begin{tikzpicture}[
redv/.style={circle, inner sep=0pt, draw=black, fill=red, thick, minimum size=8pt},
bluv/.style={rectangle, inner sep=0pt, draw=black, fill=cyan, thick, minimum size=8pt}
]

%EDGES
\draw (5,4) -- (4,3) -- (3,2) -- (2,1) ;
\draw (5,4) -- (6,3) -- (7,2) -- (8,1);
\draw (5,4) -- (5,3) -- (4,2);
\draw (6,3) -- (5,2) -- (5,1);
\draw (6,3) -- (6,2);
\draw (3,2) -- (3,1) -- (2,0);
\draw (3,2) -- (4,1) -- (4,0);
\draw (7,2) -- (6,1) -- (6,0);
\draw (7,2) -- (7,1) -- (7,0);
\draw (3,1) -- (3,0);
\draw (4,1) -- (5,0);
\draw (7,1) -- (8,0);

%VERTICES
\node[redv] at (5,4){};

\node[redv] at (4,3){};
\node[redv] at (5,3){};
\node[redv] at (6,3){};

\node[bluv] at (3,2){};
\node[bluv] at (4,2){};
\node[redv] at (5,2){};
\node[redv] at (6,2){};
\node[bluv] at (7,2){};

\node[bluv] at (2,1){};
\node[bluv] at (3,1){};
\node[redv] at (4,1){};
\node[redv] at (5,1){};
\node[bluv] at (6,1){};
\node[bluv] at (7,1){};
\node[bluv] at (8,1){};

\node[redv] at (2,0){};
\node[redv] at (3,0){};
\node[bluv] at (4,0){};
\node[bluv] at (5,0){};
\node[redv] at (6,0){};
\node[redv] at (7,0){};
\node[redv] at (8,0){};

%DOTTED LINES
\node[circle,draw, dotted, very thick,minimum size = 18pt]  at (2,1){};
\node[circle,draw, dotted, very thick,minimum size = 18pt]  at (6,2){};
\node[circle,draw, dotted, very thick,minimum size = 18pt]  at (5,1){};
\node[circle,draw, dotted, very thick,minimum size = 18pt]  at (8,1){};
\draw[dotted, very thick] (5, 3.35) -- (3.65, 2) -- (4, 1.65) -- (5.35, 3) -- (5, 3.35);
\draw[dotted, very thick] (3, 1.35) -- (1.65, 0) -- (2, -0.35) -- (3.35, -0.35) -- (3.35, 1) -- (3, 1.35);
\draw[dotted, very thick] (4, 1.35) -- (3.65, 1) -- (3.65, -0.35) -- (5.35, -0.35) -- (5.35, 0) -- (4, 1.35);
\draw[dotted, very thick] (6.35, 1.35) -- (5.65, 1.35) -- (5.65, -0.35) -- (6.35, -0.35) -- (6.35, 1.35);
\draw[dotted, very thick] (7, 1.35) -- (6.65, 1) -- (6.65, -0.35) -- (8.35, -0.35) -- (8.35, 0) --(7, 1.35);

%FRINGE TREES
\node at (11.75,4.25){\small $(T_1, \varsigma_1) :$};
\node[redv, minimum size = 6pt] at (13,4.25){};

\node at (11.75,3.75){\small $(T_2, \varsigma_2) :$};
\node[bluv, minimum size = 6pt] at (13,3.75){};

\node at (11.75,3){\small $(T_3, \varsigma_3) :$};
\draw (12.75, 3.25) -- (13.25,2.75){};
\node[redv, minimum size = 6pt] at (12.75,3.25){};
\node[bluv, minimum size = 6pt] at (13.25,2.75){};

\node at (11.75,2){\small $(T_4, \varsigma_4) :$};
\draw (12.75, 1.75) -- (13.25,2.25){};
\node[redv, minimum size = 6pt] at (12.75,1.75){};
\node[bluv, minimum size = 6pt] at (13.25,2.25){};

\node at (11.75,1){\small $(T_5, \varsigma_5) :$};
\draw (12.75, 0.75) -- (13, 1.25) -- (13.25, 0.75){};
\node[bluv, minimum size = 6pt] at (12.75,0.75){};
\node[redv, minimum size = 6pt] at (13,1.25){};
\node[bluv, minimum size = 6pt] at (13.25,0.75){};

\node at (11.75,0){\small $(T_6, \varsigma_6) :$};
\draw (12.75, -0.25) -- (13, 0.25) -- (13.25, -0.25){};
\node[redv, minimum size = 6pt] at (12.75,-.25){};
\node[bluv, minimum size = 6pt] at (13,.25){};
\node[redv, minimum size = 6pt] at (13.25,-.25){};
\end{tikzpicture}
\caption{A tree $\mathcal{T}_{23}$ with broadcasting induced colouring $\sigma_{23}$ with fringe subtrees identified.}
\label{fig:fringesubtrees}
\end{figure}
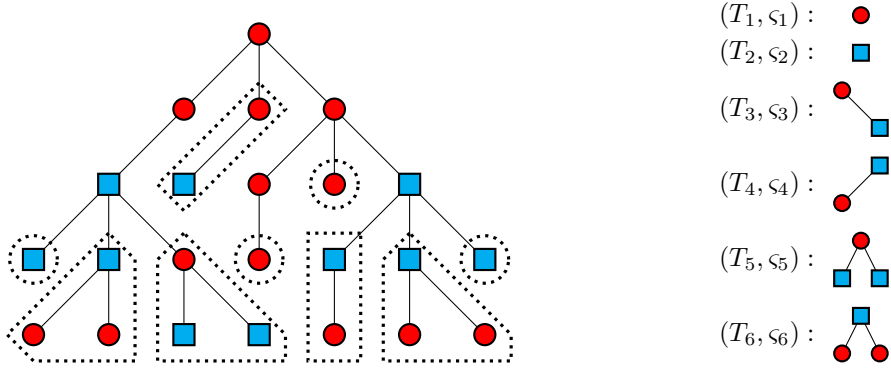
%---------- END FRINGE FIGURE --------%

The activity of each ball of type $i$ is given by the sum of the weights of the vertices in the tree $T_i$, which is $a_i := |T_i|(\alpha + 1) - \alpha$. The activities of the balls of special type is 1. When a ball of type $i$ is picked, this corresponds to adding a child $u$ to a vertex $v$ that lies in a fringe subtree isomorphic to $T_i$. Let $(T_j, \varsigma_j)$ denote the fringe subtree with $u$ attached and coloured. If $(T_j, \varsigma_j) \in S$, then the ball of type $i$ is removed and replaced with a ball of type $j$. If $(T_j, \varsigma_j) \notin S$, then the ball of type $i$ is removed, the root $\rho_j$ of $T_j$ is now represented by $\alpha\deg^+(\rho_j) + 1$ balls of special type (with the appropriate colours) that are newly added, and the children of $\rho_j$ are roots to $\deg^+(\rho_j)$ newly considered fringe subtrees. If these subtrees (along with their colouring) appear in $S$, then balls representing them are added. Otherwise, balls of special type are added for the root, and the subtrees of that vertex are considered, continuing this process until all vertices are represented by balls in the urn. If a new vertex $u$ added to $\mathcal{T}_{n}$ is the child of a vertex $v$ that is represented by balls of special type in the urn, then $\alpha$ balls of special type with the appropriate colour are added to the urn, representing the increase in the weight of $v$, while either a ball of type 1 or 2 is added as well, representing the new leaf $u$ added to $\mathcal{T}_{n}$. 

 For $4 \leq k \leq q+2$, let $S_k = \{(T_1, \varsigma_1), \ldots, (T_{k-2}, \varsigma_{k-2})\}$, and let $A_k$ be the intensity matrix for the urn with balls of type $1, \ldots, k-2$ along with $\ast_r$ and $\ast_b$. Let $a_i := |T_i|(\alpha + 1) - \alpha$. 

\begin{proof}[Proof of Theorem \ref{thm:fringe}]
We start with convergence of the random vector $\bm{Y}_n$. We would like to know the eigenvalues of the matrix $A_{q+2}$. We proceed by induction on $k$. Let $4 \leq k \leq q+2$ and consider $A_k$. Let $(T_{i(r)}, \varsigma_{i(r)})$ and $(T_{i(b)}, \varsigma_{i(b)})$ be the longest red and blue path respectively in $S_k$. Then $(A_k)_{ii} = -a_i$ for all $i \neq i(r)$ and $i \neq i(b)$, $(A_k)_{i(r),i(r)} = p - a_{i(r)}$, $(A_k)_{i(b),i(b)} = p - a_{i(b)}$, and $(A_k)_{k-1,k-1} = (A_k)_{kk} = \alpha$. Therefore, we see that 
\[ \text{tr}(A_k) = \alpha + 1 - \sum_{j=1}^{k-2} a_j.\]
The base case is $A_4$, which is precisely the intensity matrix $A^l$ in \eqref{eq:matrixleaves}, and has eigenvalues $\lambda_1 = 1 + \alpha, \lambda_2 = 2p-1 + \alpha, \lambda_3 = \lambda_4 = -1$. The induction step is identical to the one in the proofs of \cite[Theorem 6.2, Theorem 8.2]{HOJS:17}, and the eigenvalues of $A_{k+1}$ are inherited from $A_k$. The last eigenvalue is then given by 
\[ \lambda_{k+1} = \text{tr}(A_{k+1}) - \text{tr}(A_k) =-a_k.\]
Therefore, the eigenvalues of $A_{q+2}$ are 
\[ \lambda_1 = 1+\alpha, \lambda_2 = 2p-1 + \alpha, -a_1, -a_2, \ldots, -a_q.\]

All of the types of balls in the urn are dominating types. This follows since there will always eventually be balls of special type. When a ball of special type is chosen, then either a ball of type 1 or 2 is added (corresponding to the new leaf added to the tree). Since for every $(T_i ,\varsigma_i) \in S$, we have either $(T_1, \varsigma_1) \preccurlyeq (T_i,\varsigma_i)$ or $(T_2, \varsigma_2) \preccurlyeq (T_i,\varsigma_i)$, there is a positive probability of a ball of type $i$ appearing. Finally, for any $(T_i, \varsigma_i) \in S$, there is a positive probability that vertices in a fringe subtree isomorphic to $T_i$ are chosen often enough such that the tree eventually decomposes to balls of special type (and other balls of other types). So if we start the urn with a single ball of any type, then there is a positive probability that any other type of ball will eventually appear. 

All the conditions are met for convergence in distribution of the urn process, and we can apply Theorem \ref{thm:urns}. 
For appropriate right eigenvectors $\bm{v}_1$ and $\bm{v}_2$, we get 
\begin{align}
&\frac{\bm{Y}_n}{n} \xrightarrow{a.s.} \bm{v}_1, &\label{eq:Yi} \\
&\frac{\bm{Y}_n - n\bm{v}_1}{\sqrt{n}} \xrightarrow{d} \mathcal{N}(\bm{0},\Sigma_I^g) & \text{ if } p < \frac{3-\alpha}{4}, \label{eq:Yii}\\
&\frac{\bm{Y}_n - n\bm{v}_1}{\sqrt{n\ln{n}}} \xrightarrow{d} \mathcal{N}(\bm{0},\Sigma_{II}^g) & \text{ if } p = \frac{3-\alpha}{4}, \label{eq:Yiii}\\
&\frac{\bm{Y}_n - n\bm{v}_1}{n^{(2p+\alpha-1)/(1+\alpha)}} \xrightarrow{d} BZ\bm{v}_2 & \text{ if } p > \frac{3-\alpha}{4}. \label{eq:Yiv}
\end{align}
Similar to the proofs above, $R_n^w$ and $B_n^w$ are linear combinations of $Y^1_n, \ldots, Y^q_n, Y^{\ast_r}_n, Y^{\ast_b}_n$, and so by the Cram\'er-Wold Theorem (and the appropriate choice of $\bm{v}_2$), the random variable $Z$ is the same as in \eqref{eq:weights}. 

The case $p=1/2$ is treated similarly by looking at $\bm{Y}_n' = (Y_n^1, \ldots, Y_n^q, Y^{\ast}_n)$, where $Y_n^{\ast} = Y_n^{\ast_r} + Y_n^{\ast_b}$ counts all balls $\ast$ of special type. Since the colour of a new vertex is independent of the colour of its parent, when a ball of type $\ast$ is chosen, $\alpha$ balls of special type $\ast$ are added, while either a ball of type 1 or 2 is added with equal probability. For $3 \leq k \leq q+1$, let $A_k'$ be the intensity matrix for the urn with balls of type $1, \ldots, k-1$ along with balls of type $\ast$. Similar arguments as above hold, but in this case, the base case $A_3'$ is the matrix $A^l$ from \eqref{eq:matrixleavesspecial}. Thus, the eigenvalues $A_{q+1}'$ are 
\[ \lambda_1 = 1+\alpha, -a_1, -a_2, \ldots, -a_q.\]
The conditions are once again met for convergence in distribution of the urn process, and by applying Theorem \ref{thm:urns} and for the appropriate right eigenvector $\bm{v}_1'$ of $A_{q+1}'$, we get
\begin{equation}\label{eq:Y'}
\frac{\bm{Y}_n' - n\bm{v}_1'}{\sqrt{n}} \xrightarrow{d} \mathcal{N}(\bm{0}, \Sigma_I^g).
\end{equation}

The random variables $X^1_n, \ldots, X^q_n$ are linear combinations of $Y^1_n, \ldots, Y^q_n$, and so the convergences of Theorem \ref{thm:fringe} hold by \eqref{eq:Yi} -- \eqref{eq:Y'} above and the Cram\'er-Wold Theorem, though we need to replace $\bm{\mu}$ from \eqref{eq:mu} with some vector $\bm{\mu}'$ for now. We can show that $\bm{\mu}'= \bm{\mu}$ by looking at $\mathbb{E}[(X^1_n, \ldots, X^q_n)]/n$. We have just argued that $(X^1_n, \ldots, X^q_n)/n$ converges almost surely to $\bm{\mu}'$, and since no number of fringe trees exceeds the number of vertices, $(X^1_n, \ldots, X^q_n)/n$ is uniformly bounded. Therefore, $(X^1_n, \ldots, X^q_n)/n$ converges in mean to $\bm{\mu}'$, and so $\mathbb{E}[(X^1_n, \ldots, X^q_n)]/n$ converges to $\bm{\mu}'$. From \cite[Remark 3.10]{HOJS:17} we know that the expected number of fringe subtrees $X^{T_i}$ isomorphic to $T_i$ is given by 
\[\mathbb{E}[X^{T_i}_n] = \frac{\mathbb{P}(\mathcal{T}_{\alpha,k_i} \simeq T_i)\frac{1}{1 + \alpha}}{(k_i - 1 + \frac{1}{1 + \alpha})(k_i + \frac{1}{1+\alpha})} n + O(1),\]
where $k_i$ is the number of vertices in $T_i$.
Since the root of $\mathcal{T}_{n}$ is red or blue with equal probability, then by symmetry, the root of a fringe subtree $T$ isomorphic to $T_i$ is red or blue with equal probability. Then by definition of $\sigma_n$, the colouring $\varsigma$ of $T$ follows the same distribution as $\sigma_{T}$. From this, we conclude that 
\begin{align*}
\mathbb{E}[X_n^i] &= \mathbb{P}(\sigma_{T_i} = \varsigma_i)\mathbb{E}[X_n^{T_i}] \\
&= \mathbb{P}(\sigma_{T_i}= \varsigma_i)\frac{\mathbb{P}(\mathcal{T}_{\alpha,k_i} \simeq T_i)\frac{1}{1 + \alpha}}{(k_i - 1 + \frac{1}{1 + \alpha})(k_i + \frac{1}{1+\alpha})} n + O(1) \\
&= \frac{\mathbb{P}((\mathcal{T}_{\alpha,k_i}, \sigma_{k_i}) \simeq (T_i, \varsigma_i))\frac{1}{\alpha + 1}}{(k_i + \frac{1}{\alpha + 1}- 1)(k_i + \frac{1}{\alpha + 1})}n + O(1).
\end{align*}
Therefore, $\mathbb{E}[(X^1_n, \ldots, X^q_n)]/n$ converges to $\bm{\mu}$, and so $\bm{\mu}'= \bm{\mu}$. 
\end{proof}

%----------------------------------------------%
%---------- PROOFS ROOTS ----------%
%----------------------------------------------%

\section{Proofs of properties of the root cluster}\label{sec:root}

As mentioned in the introduction, convergence in distribution for the size of the root cluster has previously been proven \cite{BABE:15,BAUR:20, BUSI:18, MOHL:15} using random walks and branching processes. Here we use results from analytic combinatorics to get recursions for the moments of the limiting distributions.

We start by a useful description of the trees studied. Since we are only interested in the size of the root cluster and not the colour of this cluster, we can assume without loss of generality that the root is red. In the following, we define $\phi: \mathbb{Z}_{\geq 0} \rightarrow \mathbb{R}_{\geq 0}$ as 
\begin{equation}\label{eq:phi}
 \phi(\delta) = \begin{cases}
1 & \alpha = 0, \\
\frac{\Gamma(\delta + 1/\alpha)}{\Gamma(1/\alpha)} & \alpha > 0, \\
\frac{d!}{(d-\delta)!} & \alpha = -\frac{1}{d}, d \in \mathbb{Z}^+.
\end{cases}
\end{equation}
For a particular tree $T$ on $n$ vertices, define the {\em weight} of $T$ to be 
\begin{equation}\label{eq:weightsTrees}
 w(T) = \prod_{v \in V(T)}\phi(\deg^+(v)).
\end{equation}
Then the probability of producing the tree $T$ is given by 
\[ \mathbb{P}(\mathcal{T}_{n} = T) = \frac{w(T)}{\sum_{T'}w(T')},\]
see for example \cite[Section 1.1.3]{DRMO:09}. The probability of producing $T$ with a broadcasting induced colouring $\sigma_T$ being the 2-colouring $c$ is then given by 
\[ \mathbb{P}((\mathcal{T}_{n}, \sigma_{n}) = (T,c)| \sigma_n(\rho) = r) = \frac{2\mathbb{P}(\sigma_T = c)w(T)}{\sum_{T'}w(T')},\]
the factor of 2 appearing since we condition on the root being coloured red. 
If we define the weight $\omega(T,c) = 2\mathbb{P}(\sigma_T = c)w(T)$, then 
\[  \mathbb{P}((\mathcal{T}_{n}, \sigma_{n}) = (T,c)| \sigma_n(\rho) = r) = \frac{\omega(T,c)}{\sum_{(T', c')}\omega(T', c')}\]
where $(T', c')$ ranges over all rooted trees on $n$ vertices and over all 2-colourings of the vertices such that the root is red. Symmetrically, define $\omega'(T,c)$ to be the weight of $T$ and $c$ where $c$ is conditioned such that the colour of the root is blue.

Let $r_{n,k}$ be the sum of the weights $\omega(T,c)$ over all  trees with $n$ vertices whose root vertex is red and whose root cluster has size $k$, and let $b_n$ be the sum of the weights $\omega'(T,c)$ over all trees on $n$ vertices with a blue root. Equivalently, $b_n$ is the sum of the weights $w(T)$ over all trees $T$ on $n$ vertices. Then notice that
\[  \mathbb{P}((\mathcal{T}_{n}, \sigma_{n}) = (T,c)) = \frac{\omega(T,c)}{\sum_k r_{n,k}} = \frac{\omega(T,c)}{b_n},\]
and the probability that $\mathcal{T}_{n}$ with colouring $\sigma_{n}$ has a root cluster of size $k'$ is given by $r_{n,k'}/\sum_k r_{n,k}$. 
We develop a recursion formula for $r_{n,k}$. %Without loss of generality, we can assume the root is coloured red. 
Take any tree $T$ on $n$ vertices, with colouring $c$ conditioned on the root $\rho$ being red, whose root cluster is of size $k$, and with $\delta$ subtrees rooted at the children of the root $\rho$. Suppose we order the subtrees such that the first $s$ subtrees $T_1,\ldots, T_s$ have red roots, and the remaining subtrees $T_{s+1}, \ldots, T_{\delta}$ have blue roots. Then the weight of $T$ and $c$ can be written as 
\[ \omega(T,c) =  \phi(\delta)\prod_{i=1}^sp \omega(T_i, c_i)\prod_{j=s+1}^\delta(1-p) \omega'(T_j, c_j),\]
where the $c_i$'s and $c_j$'s are the colouring $c$ restricted to the subtrees $T_i$ and $T_j$ respectively. 
If the trees $T_1, \ldots, T_\delta$ are of size $n_1, \ldots, n_\delta$, then $n_1 + \cdots + n_\delta = n-1$. If the trees $T_1, \ldots, T_s$ have root clusters of size $k_1, \ldots, k_s$, then $k_1 + \cdots + k_s = k-1$.
Now sum over all such trees $T$ on $n$ vertices with root clusters of size $k$. The degree $\delta$ of the root can range from 0 to $n-1$. The number $s$ of children with the colour red ranges from 0 to $\delta$. There are $\binom{\delta}{s}$ ways of choosing these $s$ children. There are $\binom{n-1}{n_1,\ldots, n_\delta}$ ways of distributing the remaining $n-1$ vertices to the $\delta$ subtrees $T_1, \ldots, T_\delta$. Finally, to unorder the subtrees we divide by $\delta!$ to get the recursion
\begin{align}
 r_{n,k} &=  \sum_{\delta=0}^{n-1} \sum_{s=0}^\delta \binom{\delta}{s} \frac{\phi(\delta)}{\delta!} \sum_{n_1, \ldots, n_\delta} \binom{n-1}{n_1,\ldots, n_\delta} \sum_{k_1, \ldots, k_s} \prod_{i=1}^spr_{n_i, k_i}\prod_{j=s+1}^\delta (1-p)b_{n_j} \nonumber \\
&= \sum_{\delta=0}^{\infty} \sum_{s=0}^\delta \binom{\delta}{s} \frac{\phi(\delta)}{\delta!} \sum_{n_1, \ldots, n_\delta} \binom{n-1}{n_1,\ldots, n_\delta} \sum_{k_1, \ldots, k_s}p^s \prod_{i=1}^sr_{n_i, k_i}(1-p)^{\delta-s}\prod_{j=s+1}^\delta b_{n_j},\label{eq:recursion}
\end{align}
where $n_1, \ldots, n_\delta$ range over all non-negative $n_1 + \cdots + n_\delta = n-1$, and $k_1, \ldots, k_s$ range over $k_1 + \cdots + k_s = k-1$. Finally we can let $\delta$ range to infinity since $r_{0,k} = b_0 = 0$. \\

%---------- START RECURSION FIGURE ----------%
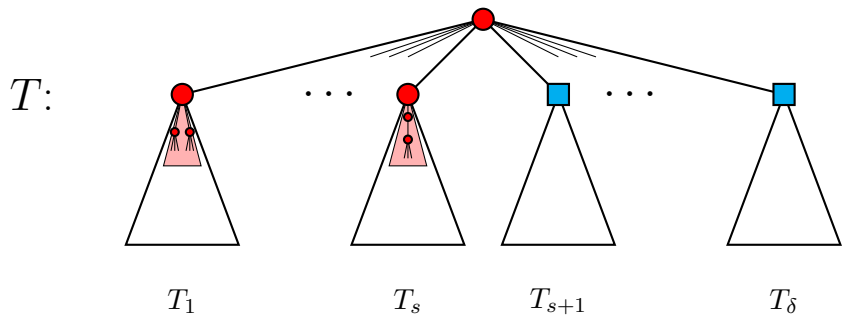
\begin{figure}[h!]
\centering
\begin{tikzpicture}[
redv/.style={circle, inner sep=0pt, draw=black, fill=red, thick, minimum size=8pt},
bluv/.style={rectangle, inner sep=0pt, draw=black, fill=cyan, thick, minimum size=8pt}
]

\filldraw[fill=red!30] (6,9) -- (6.25, 8.05) -- (5.75, 8.05) -- (6,9);

\filldraw[fill=red!30] (9,9) -- (9.25, 8.05) -- (8.75, 8.05) -- (9,9);

\draw (6,9) -- (6.1,8.5);
\draw (6,9) -- (5.9, 8.5);
\draw (5.9, 8.5) -- (5.95, 8.25);
\draw (5.9, 8.5) -- (5.9, 8.25);
\draw (5.9, 8.5) -- (5.85, 8.25);
\draw (6.1, 8.5) -- (6.05, 8.25);
\draw (6.1, 8.5) -- (6.1, 8.25);
\draw (6.1, 8.5) -- (6.15, 8.25);

\draw(9,9) -- (9,8.7) -- (9,8.4) -- (9,8.15);
\draw(9,8.4) -- (9.05, 8.15);
\draw(9,8.4) -- (8.95, 8.15);

\draw[thick] (6,9) -- (6.75,7) -- (5.25,7) -- (6,9);
\draw[thick] (9,9) -- (9.75, 7) -- (8.25, 7) -- (9,9);
\draw[thick] (11,9) -- (11.75, 7) -- (10.25,7) -- (11,9);
\draw[thick] (14,9) -- (14.75,7) -- (13.25,7) -- (14,9);

\draw[thick] (6,9) -- (10,10);
\draw (8.5,9.5) -- (10,10);
\draw (8.75,9.5) -- (10,10);
\draw (9,9.5) -- (10,10);
\draw[thick] (9,9) -- (10,10);
\node at (8, 9){\LARGE $\cdots$};

\draw[thick] (11, 9) -- (10,10);
\draw (11,9.5) -- (10,10);
\draw (11.25,9.5) -- (10,10);
\draw (11.5,9.5) -- (10,10);
\draw[thick] (14,9) -- (10,10);
\node at (12, 9){\LARGE $\cdots$};

\node[redv] at (10,10){};
\node[redv] at (6,9){};
\node[redv] at (9,9){};
\node[bluv] at (11,9){};
\node[bluv] at (14,9){};

\node[redv, minimum size = 3pt] at (6.1, 8.5){};
\node[redv, minimum size = 3pt] at (5.9, 8.5){};

\node[redv, minimum size = 3pt] at (9, 8.7){};
\node[redv, minimum size = 3pt] at (9, 8.4){};

\node at (6,6.25){$T_1$};
\node at (9,6.25){$T_s$};
\node at (11,6.25){$T_{s+1}$};
\node at (14,6.25){$T_{\delta}$};
\node at (4, 9){\LARGE $T$:};

\end{tikzpicture}
\caption{A recursion for the weight of $T$ is established by examining the subtrees rooted at children of the root of $T$.}
\label{fig:recursion}
\end{figure}
%---------- END RECURSION FIGURE ----------%

We start with the case $\alpha = 0$. It is already known that the size of the root cluster converges to a Mittag-Leffler distribution after proper rescaling. For the sake of completeness and as a simpler example of our more general methods, we reprove the result here. 

Let $R(x,u)$ be the bivariate generating function for $r_{n,k}$, so 
\begin{equation*}
 R(x,u) = \sum_{n,k \geq 1} \frac{r_{n,k}}{n!}x^nu^k,
\end{equation*}
and let $B(x)$ be the exponential generating function for $b_n$. The first thing to notice is that $b_n$ and $\sum_{k=1}^n r_{n,k}$ are simply the number of recursive trees of size $n$, which is $(n-1)!$. Therefore, 
\[ B(x) = R(x,1) =  -\ln(1-x).\]
Using \eqref{eq:recursion}, we establish  a partial differential equation for $R(x,u)$. The resulting differential equation is then solved to get the following closed form for $R(x,u)$. 

\begin{proposition}\label{prop:reca=0}
For $\alpha = 0$, the bivariate generating function $R(x,u)$ is given by
\begin{equation*}
R(x,u) = \frac{-1}{p} \ln(1 - u + u(1-x)^p).
\end{equation*}
\end{proposition}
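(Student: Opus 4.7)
The plan is to convert the recursion \eqref{eq:recursion} into a partial differential equation for $R(x,u)$, and then solve that equation explicitly using the known expression for $B(x)$.

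First I would translate \eqref{eq:recursion} into a statement about generating functions. Multiplying both sides by $x^{n-1} u^k / (n-1)!$, summing over $n$ and $k$, and recognising the combinatorial convolutions in $(n_1, \ldots, n_\delta)$ and $(k_1, \ldots, k_s)$ as the EGF convolutions that produce $R(x,u)^s$ for the $s$ red-rooted subtrees and $B(x)^{\delta - s}$ for the remaining blue-rooted subtrees, I obtain, using that $\phi(\delta) = 1$ when $\alpha = 0$,
\begin{equation*}
\frac{\partial R}{\partial x}(x,u) = u \sum_{\delta \geq 0} \frac{1}{\delta!} \sum_{s=0}^{\delta} \binom{\delta}{s} \bigl(p R(x,u)\bigr)^s \bigl((1-p) B(x)\bigr)^{\delta - s} = u \exp\!\bigl(p R(x,u) + (1-p) B(x)\bigr).
\end{equation*}
The factor $u$ in front accounts for the root itself, which lies in the root cluster. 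The initial condition is $R(0,u) = 0$, since $r_{0,k} = 0$ for all $k$.

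Next I would plug in the known value $B(x) = -\ln(1-x)$, which yields $\exp((1-p)B(x)) = (1-x)^{p-1}$, so the ODE (in $x$, with $u$ a parameter) becomes
\begin{equation*}
\frac{\partial R}{\partial x}(x,u) = u(1-x)^{p-1}\, e^{p R(x,u)}.
\end{equation*}
I would solve this by the substitution $F(x,u) = e^{-p R(x,u)}$; differentiating gives $\partial_x F = -p u (1-x)^{p-1}$, and direct integration produces $F(x,u) = u(1-x)^p + C(u)$. The initial condition $R(0,u) = 0$ forces $F(0,u) = 1$, hence $C(u) = 1 - u$. Taking logarithms yields the claimed formula.

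The only non-routine step is getting the PDE set up correctly; in particular, keeping track of the extra factor of $u$ for the root, the binomial $\binom{\delta}{s}$ that lets the exponential series collapse, and making sure that the convolution structure in $k_1 + \cdots + k_s = k - 1$ is handled before summing on $n$. Once the PDE is in hand, the remaining ODE in $x$ is elementary.
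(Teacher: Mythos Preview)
Your proof is correct and follows essentially the same approach as the paper: derive the PDE $\partial_x R = u\exp(pR + (1-p)B)$ from the recursion \eqref{eq:recursion}, substitute $B(x) = -\ln(1-x)$, and solve the resulting separable ODE with initial condition $R(0,u)=0$. You actually spell out the solution step (via $F = e^{-pR}$) where the paper simply asserts the answer.
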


\begin{proof}
 From \eqref{eq:recursion}, where $\phi(\delta) = 1$ for all $\delta$ (recall \eqref{eq:phi}), we get the partial differential equation 
\begin{align*}
\frac{\partial}{\partial x} R(x,u) &= \sum_{n,k} n \frac{r_{n,k}}{n!} x^{n-1}u^k \\
&= u \sum_{n,k}\sum_{\delta=0}^{n-1}\sum_{s=0}^\delta \binom{\delta}{s}\frac{1}{\delta!}\sum_{n_1, \ldots, n_\delta} \sum_{k_1, \ldots, k_s}   \prod_{i=1}^s \frac{pr_{n_i, k_i}x^{n_i}u^{k_i}}{n_i!} \prod_{j=s+1}^\delta \frac{(1-p)b_{n_j}x^{n_j}}{n_j!} \\
&= u\sum_{\delta=0}^\infty \sum_{s=0}^\delta \frac{1}{\delta!} \binom{\delta}{s} (pR(x,u))^s((1-p)B(x))^{\delta-s} \\
&= u \sum_{\delta=0}^{\infty}\frac{1}{\delta!}(pR(x,u) + (1-p)B(x))^\delta \\
&= u \exp(pR(x,u) + (1-p)B(x)).
\end{align*}
Replacing $B(x)$ with $-\ln(1-x)$ and with the initial condition $R(0,u) = 0$, this linear differential equation has the solution
\[ R(x,u) = \frac{-1}{p} \ln(1 - u + u(1-x)^p).\]
\end{proof}

To prove Theorem \ref{thm:rootclustera=0}, we use the method of moments to establish a limiting distribution for the size of the root cluster after appropriate scaling. 

\begin{proof}[Proof of Theorem \ref{thm:rootclustera=0}]
From Proposition \ref{prop:reca=0}, we calculate
\[ \left.\frac{\partial^k}{\partial u^k} R(x,u)\right|_{u=1} = \frac{1}{p}(k-1)! \big((1-x)^{-p} - 1 \big)^k. \]
We then extract the coefficients, 
\[ [x^n]\left.\frac{\partial^k}{\partial u^k} R(x,u)\right|_{u=1} \sim[x^n] \frac{1}{p}(k-1)! (1-x)^{-pk}
\sim \frac{(k-1)!n^{pk-1}}{p\Gamma(pk)}.\]
Let $\mathcal{C}_n$ be the root cluster at time $n$. The factorial moments of $|\mathcal{C}_n|$ are extracted from the bivariate generating function (see for example \cite[Proposition III.2]{FLSE:09}) to get 
\[ \mathbb{E}\left[|\mathcal{C}_n|(|\mathcal{C}_n| -1) \cdots (|\mathcal{C}_n| - k + 1)\right]= \frac{ [x^n]\left.\frac{\partial^k}{\partial u^k} R(x,u)\right|_{u=1}}{[x^n]R(x,1)} \sim\frac{(k-1)!n^{pk}}{p\Gamma(pk)} .\]
It can be seen (say by induction), that once expanded and scaled by $n^{pk}$, all but the $\mathbb{E}[|\mathcal{C}_n|^k]$ term on the left hand side of the above equation vanish to zero, and thus 
\[ \mathbb{E}\left[\frac{|\mathcal{C}_n|^k}{n^{pk}}\right] \sim \frac{(k-1)!n^{pk}}{n^{pk}p\Gamma(pk)} = \frac{(k-1)!}{p\Gamma(pk)} =\frac{k!}{\Gamma(pk + 1)},\]
which are the moments of the Mittag-Leffler distribution with parameter $p$.
The Mittag-Leffler distribution is uniquely determined by its moments (since its moment generating function, the Mittag-Leffler function $E_p(s) = \sum_{n=0}^\infty \frac{s^n}{\Gamma(pn+1)}$, converges for all values of $s$ \cite{MILE:05}). Therefore, 
\[ \frac{|\mathcal{C}_n|}{n^p} \xrightarrow{d} M_p\]
where $M_p$ has the Mittag-Leffler distribution with parameter $p$. 
\end{proof}

%----------- alpha neq 0 ---------%
We move on now to the case $\alpha > 0$.  
We again let
\begin{equation*}
R(x,u) = \sum_{n,k}r_{n,k}x^nu^k \quad \text{and} \quad B(x)  =\sum_n b_n x^n.
\end{equation*}
The functions $B(x)$ and $R(x,1)$ are simply the generating function of preferential attachment trees, which is already known (see for example \cite[p.~252]{DRMO:09}) to be 
\begin{equation*}
 B(x) = R(x,1) = 1 - (1 - (1 + 1/\alpha)x)^{\frac{\alpha}{1 + \alpha}}.
\end{equation*}
Unlike the case when $\alpha = 0$, we were unable to derive a closed form for $R(x,u)$. But by applying the method of moments, we only need the $k$'th partial derivatives of $R(x,u)$ with respect to $u$. Define 
\begin{equation*}
R_k(x) := \left. \frac{\partial^k}{\partial u^k}R(x,u)\right|_{u=1},
\end{equation*}
and 
\begin{equation*}
R_0(x) := R(x,1) = 1 - (1 - (1 + 1/\alpha)x)^{\frac{\alpha}{1 + \alpha}}.
\end{equation*}
Throughout the remainder of this section, we will make use of the partial Bell polynomials, which are defined to be 
\[ B_{k,j}(x_1, \ldots, x_{k-j+1}) = \sum_{\substack{m_1 + \cdots + (k-j+1)m_{k-j+1} = k\\ m_1 + \cdots + m_{k-j+1} = j}}k!\prod_{i=1}^{k-j+1} \frac{x_i^{m_i}}{m_i!i!^{m_i}}.\]

\begin{lemma}\label{lem:supera}
Let $\alpha > 0$. Then $R_k(x)$ is analytic on the cut plane \[\mathbb{C}\setminus [1/(1 + 1/\alpha),\infty)\]  and 
\begin{equation*}
R_k(x) = C_k(1 - (1 + 1/\alpha)x)^{\frac{-kp - \alpha(k-1)}{1 + \alpha}} + O\left((1 - (1 + 1/\alpha)x)^{\frac{-kp - \alpha(k-1)}{1 + \alpha}+\varepsilon}\right) 
\end{equation*}
for some $\varepsilon > 0$, where $C_k$ satisfies the recursion $C_1 = \alpha/(p + \alpha)$ and 
\begin{equation*}
(k-1)(p/\alpha +1)C_k = \sum_{j=2}^k \frac{p^j\Gamma(j + 1/\alpha)}{\Gamma(1/\alpha)}B_{k,j}(C_1, \ldots, C_{k-j+1}).
\end{equation*}
\end{lemma}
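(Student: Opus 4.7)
The plan is to derive a partial differential equation for $R(x,u)$ from the recursion \eqref{eq:recursion}, then differentiate $k$ times with respect to $u$ and set $u=1$ to obtain a linear first-order ODE for $R_k$. Following the computation in the proof of Proposition~\ref{prop:reca=0}, but with $\phi(\delta) = \Gamma(\delta + 1/\alpha)/\Gamma(1/\alpha)$ for $\alpha > 0$, the inner sum $\sum_{\delta\ge 0} \phi(\delta) y^\delta/\delta!$ evaluates to $(1-y)^{-1/\alpha}$, yielding
\[ \frac{\partial R}{\partial x}(x,u) \;=\; u\bigl(1-pR(x,u)-(1-p)B(x)\bigr)^{-1/\alpha}. \]
Set $Y(x,u) := 1-pR(x,u)-(1-p)B(x)$. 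Applying the Leibniz rule $\partial_u^k(u f) = u\,\partial_u^k f + k\,\partial_u^{k-1}f$ together with Fa\`a di Bruno's formula for $Y^{-1/\alpha}$, and using $Y|_{u=1} = 1-B$, $\partial_u^j Y|_{u=1} = -pR_j$ for $j\ge 1$, and the homogeneity $B_{k,j}(-px_1,-px_2,\dots) = (-p)^j B_{k,j}(x_1,x_2,\dots)$, one obtains
\[ \frac{dR_k}{dx} \;=\; \sum_{j=1}^k \frac{p^j\,\Gamma(j+1/\alpha)}{\Gamma(1/\alpha)}\,(1-B)^{-1/\alpha-j}\,B_{k,j}(R_1,\dots,R_{k-j+1}) \;+\; k\,\partial_u^{k-1}\bigl(Y^{-1/\alpha}\bigr)\bigr|_{u=1}. \]

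Changing variable to $s := 1 - (1+1/\alpha)x$ gives $1-B = s^{\alpha/(1+\alpha)}$ and $\partial_x = -((1+\alpha)/\alpha)\partial_s$. The $j=1$ term, which uses $B_{k,1}(R_1,\dots,R_k) = R_k$, equals $(p/\alpha)s^{-1}R_k$; transferring it to the left produces
\[ R_k'(s) \;+\; \frac{p}{1+\alpha}\,s^{-1}R_k \;=\; -\frac{\alpha}{1+\alpha}\,F_k(s), \]
where $F_k$ depends only on $R_1,\dots,R_{k-1}$. This linear ODE is solved by the integrating factor $s^{p/(1+\alpha)}$ with initial condition $R_k|_{s=1} = 0$ (since $R(0,u)=0$). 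The analyticity claim then follows by strong induction: assuming $R_1,\dots,R_{k-1}$ are analytic on the cut plane, so is $F_k$, hence so is the integrating-factor solution for $R_k$.

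For the singular expansion, the base case $k=1$ is solved directly and gives $R_1 = C_1 s^{-p/(1+\alpha)} - C_1 s^{\alpha/(1+\alpha)}$ with $C_1 = \alpha/(p+\alpha)$, confirming the claim and the initial recursion value. For $k \ge 2$, the inductive hypothesis $R_i = C_i s^{-(ip+\alpha(i-1))/(1+\alpha)} + \text{lower order}$ combined with the weighted homogeneity of $B_{k,j}$ (whose monomials satisfy $\sum i\,m_i = k$ and $\sum m_i = j$) implies that each $j\in\{2,\dots,k\}$ term of $F_k$ has leading singular order exactly $s^{-(1+kp+\alpha k)/(1+\alpha)}$ with coefficient $(p^j\Gamma(j+1/\alpha)/\Gamma(1/\alpha))\,B_{k,j}(C_1,\dots,C_{k-j+1})$; a direct computation shows that the correction $k\,\partial_u^{k-1}(Y^{-1/\alpha})|_{u=1}$ is less singular by a factor $s^{(p+\alpha)/(1+\alpha)}$ and so contributes only to the error. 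Multiplying by $s^{p/(1+\alpha)}$ and integrating yields the stated leading term $C_k s^{-(kp+\alpha(k-1))/(1+\alpha)}$, and matching the leading coefficients gives exactly $(k-1)(p/\alpha+1)C_k = \sum_{j=2}^k (p^j\Gamma(j+1/\alpha)/\Gamma(1/\alpha)) B_{k,j}(C_1,\dots,C_{k-j+1})$.

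The main obstacle is the bookkeeping for the subleading terms: I must check that every contribution from expanding the $R_i$ inside $B_{k,j}$, together with the $k\,\partial_u^{k-1}$ correction, is strictly less singular than $s^{-(kp+\alpha(k-1))/(1+\alpha)}$, so that no competing singularity of the same order is produced by integration. The weighted homogeneity of the Bell polynomials is what makes the accounting clean, and the inductive structure guarantees that the only exponents appearing in the expansion of $R_k$ are those already present in its inductively-known predecessors together with the new exponent $-(kp+\alpha(k-1))/(1+\alpha)$.
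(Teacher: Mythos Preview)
Your proposal is correct and follows essentially the same approach as the paper: derive the PDE $\partial_x R = u(1-pR-(1-p)B)^{-1/\alpha}$ from the recursion, apply the Leibniz rule and Fa\`a di Bruno's formula to obtain a linear first-order ODE for $R_k$ with inhomogeneity depending only on $R_1,\dots,R_{k-1}$, and proceed by strong induction, matching leading singular coefficients to extract the recursion for $C_k$. Your early change of variable to $s=1-(1+1/\alpha)x$ and the explicit integrating-factor formulation are cosmetic differences; the paper keeps the variable $x$ throughout but performs the same computation.
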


\begin{proof}
Using the recursion in \eqref{eq:recursion}, where $\phi(d) = \Gamma(d + 1/\alpha)/\Gamma(1/\alpha)$ (recall \eqref{eq:phi}), we get the following partial differential equation:
\begin{align*}
\frac{\partial}{\partial x}R(x,u) &= \sum_{n,k} n \frac{r_{n,k}}{n!}x^{n-1}u^k \\
&= u\sum_{n,k} \sum_{\delta=0}^{n-1}\sum_{s=0}^\delta \binom{\delta}{s}\frac{\phi(\delta)}{\delta!}\sum_{n_1, \ldots , n_\delta}\sum_{k_1, \ldots, k_s}\prod_{i=1}^s \frac{pr_{n_i,k_i}x^{n_i}u^{k_i}}{n_i!} \prod_{j=s+1}^\delta \frac{(1-p)b_{n_j}x^{n_j}}{n_j!} \\
&= u \sum_{\delta=0}^{\infty} \frac{\Gamma(\delta + 1/\alpha)}{\Gamma(1/\alpha)\delta!} \sum_{s=0}^\delta \binom{\delta}{s} (pR(x,u))^s((1-p)B(x))^{\delta-s} \\
&= u \sum_{\delta=0}^{\infty} \frac{\Gamma(\delta + 1/\alpha)}{\Gamma(1/\alpha)\delta!}\left(pR(x,u) + (1-p)B(x)\right)^\delta \\
&= u\left(1 - (pR(x,u) + (1-p)B(x))\right)^{-1/\alpha} \\
&= u\left(1 - \left(pR(x,u) + (1-p)\left(1 - (1 - (1 + 1/\alpha)x)^{\alpha/(1 + \alpha)}\right)\right)\right)^{-1/\alpha}.
\end{align*}

We proceed by strong induction. Using the above differential equation, we see that 
\begin{equation*}
R_1'(x) =\left.\frac{\partial^2}{\partial u \partial x}R(x,u)\right|_{u = 1} = \frac{p}{\alpha - (1 + \alpha)x}R_1(x) + (1 - (1 + 1/\alpha)x)^{\frac{-1}{1 + \alpha}}.
\end{equation*}
Solving this differential equation with the initial condition $R_1(0) = 0$ yields
\begin{equation*}
R_1(x) = \frac{\alpha}{p + \alpha}\left((1 - (1 + 1/\alpha)x)^{\frac{-p}{1 + \alpha} }- (1 - (1 + 1/\alpha)x)^{\frac{\alpha}{1 + \alpha}}\right),
\end{equation*}
which is analytic on the desired cut plane. 

For the inductive step, using the product rule at higher orders of partial differentiation produces 
\begin{align*}
R_k'(x) &= \left.\frac{\partial^{k+1}}{\partial u^k \partial x}R(x,u)\right|_{u = 1} \\
&= \left.\frac{\partial^k}{\partial u^k}u\left(1 - \left(pR(x,u) + (1-p)\left(1 - (1 - (1 + 1/\alpha)x)^{\alpha/(1 + \alpha)}\right)\right)\right)^{-1/\alpha}\right|_{u=1} \\
&=\left. \left( u \frac{\partial^k}{\partial u^k}f(R(x,u)) + k \frac{\partial^{k-1}}{\partial u^{k-1}}f(R(x,u))\right)\right|_{u=1}\\
\end{align*}
where $f(y) = (1 - (py + (1-p)(1-(1-(1 + 1/\alpha)x)^{\frac{\alpha}{1 + \alpha}})))^{-1/\alpha}.$
Define
\[ f^{(m)}(y) := \frac{d^m}{dy^m}f(y). \]
Then 
\begin{equation}\label{eq:f(R0)}
 f^{(m)}(R_0(x)) =  \frac{\Gamma(m + 1/\alpha)}{\Gamma(1/\alpha)}p^m(1 - (1 + 1/\alpha)x)^{\frac{-1 - \alpha m}{1 + \alpha}},
\end{equation}
and by using Fa\'a di Bruno's formula for higher order derivatives (see \cite[p.~139, Theorem C]{COMT:74}), we see that 
\begin{align}
\left.\frac{\partial^k}{\partial u^k}f(R(x,u))\right|_{u=1} &=  \sum_{j=1}^k f^{(j)}(R_0(x))B_{k,j}\left(R_1(x), \ldots, R_{k-j+1}(x)\right) \nonumber\\
&= \frac{p}{\alpha}(1-(1+ 1/\alpha)x)^{-1}R_k(x) + g_k(x), \label{eq:f}
\end{align}
where 
\[ g_k(x) = 
\sum_{j=2}^k f^{(j)}(R_0(x))B_{k,j}\left( R_1(x), \ldots, R_{k-j+1}(x)\right).\]
Since analyticity is preserved under arithmetic operations as well as integration, the analyticity of $R_k(x)$ on the desired cut plane follows by the analyticity in the induction hypothesis. By using the forms of $R_j(x)$ in the induction hypothesis and \eqref{eq:f(R0)}, then for some $\varepsilon >0$, 
\begin{align*}
 g_k(x) &=  G_k(1 - (1+1/\alpha)x)^{\frac{-kp - k\alpha - 1}{1+\alpha}} + O\left( (1 - (1+1/\alpha)x)^{\frac{-kp - k\alpha - 1}{1+\alpha}+ \varepsilon}    \right),
\end{align*}
where 
\[ G_k 
= \sum_{j=2}^k \frac{p^j\Gamma(j + 1/\alpha)}{\Gamma(1/\alpha)} B_{k,j}(C_1, \ldots, C_{k-j+1}).\]
From \eqref{eq:f}, the induction hypothesis, and the assumption $\alpha > 0$, we can also conclude that
\[
 \left.k\frac{\partial^{k-1}}{\partial u^{k-1}}f(R(x,u))\right|_{u=1} = O\left((1 - (1+1/\alpha)x)^{\frac{-kp - k\alpha - 1 + p+\alpha}{1+\alpha}} \right) =O\left( (1 - (1+1/\alpha)x)^{\frac{-kp - k\alpha - 1}{1+\alpha}+ \varepsilon}\right)\]
for some $\varepsilon > 0$. By solving the differential equation
\begin{align*}
R_k'(x) &= \left. \left( u \frac{\partial^k}{\partial u^k}f(R(x,u)) + k \frac{\partial^{k-1}}{\partial u^{k-1}}f(R(x,u))\right)\right|_{u=1} \\
&=  \frac{p}{\alpha}(1 - (1+1/\alpha)x)^{-1}R_k(x) + G_k(1 - (1+1/\alpha)x)^{\frac{-kp - k\alpha - 1}{1+\alpha}} \\
&\hspace{20mm}+ O\left( (1 - (1+1/\alpha)x)^{\frac{-kp - k\alpha - 1}{1+\alpha}+ \varepsilon}    \right), 
\end{align*}
we get that 
\[ R_k(x) = C_k(1 - (1 + 1/\alpha)x)^{\frac{-kp - \alpha(k-1)}{1 + \alpha}} + O\left((1 - (1 + 1/\alpha)x)^{\frac{-kp - \alpha(k-1)}{1 + \alpha}+\varepsilon}\right),\]
where
\[ C_k = \frac{G_k}{(k-1)(p/\alpha + 1)},\]
concluding the proof of the lemma. 
\end{proof}

When proving Theorem \ref{thm:rootclustera>0}, we need to show that our limiting distribution is uniquely determined by its moments. This is accomplished by verifying that the moment generating function exists for some positive radius. To prove this fact, we will instead show that the exponential generating function for the coefficients $C_k$ from the previous lemma 
exists for some positive radius around $x=0$. 

\begin{lemma}\label{lem:Ckpowerseries}
The differential equation 
\begin{equation}\label{eq:diffeqCk}
 xc'(x) = \left(\frac{\alpha}{p + \alpha}\right)\left( c(x) - 1 + \frac{1}{(1-pc(x))^{1/\alpha}}\right)
\end{equation}
has a unique analytic solution for some neighbourhood around $x=0$. Furthermore, this solution can be written as 
\[ c(x) := \sum_{k=1}^\infty \frac{C_k}{k!}x^k.\]
\end{lemma}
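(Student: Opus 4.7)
The plan has two parts: (i) verify by coefficient matching that any formal power series solution $c(x) = \sum_{k\ge 1} (C_k/k!)\, x^k$ of the ODE with $C_1 = \alpha/(p+\alpha)$ must have the remaining $C_k$ given by the recursion in Lemma~\ref{lem:supera}, and (ii) show that the resulting formal series has positive radius of convergence via a majorant argument.

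For (i), I would substitute the ansatz into the ODE and use
$(1 - p c(x))^{-1/\alpha} = \sum_{j\ge 0} \frac{p^j \Gamma(j + 1/\alpha)}{\Gamma(1/\alpha)\, j!}\, c(x)^j$
together with the Fa\`a di Bruno identity $[x^k]\, c(x)^j = (j!/k!)\, B_{k,j}(C_1, \ldots, C_{k-j+1})$. The coefficient of $x^k$ on the left-hand side is $k C_k/k!$, while on the right-hand side the contributions of the explicit $c(x)$ term and the $j=1$ piece of the expansion combine to exactly $C_k/k!$ (this is a restatement of $F'(0) = 1$, where $F(y) = \frac{\alpha}{p+\alpha}(y - 1 + (1-py)^{-1/\alpha})$). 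Hence the $k = 1$ equation is the tautology $C_1 = C_1$, forcing us to fix $C_1$ externally (here to $\alpha/(p+\alpha)$); for $k \ge 2$ the remaining terms give exactly the recursion of Lemma~\ref{lem:supera}, which determines $C_k$ uniquely from $C_1, \ldots, C_{k-1}$.

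For (ii), I would use the majorant method. Define $\hat C_1 = C_1$ and, for $k \ge 2$, drop the factor $k - 1$ from the recursion by setting
\[
\bigl(\tfrac{p}{\alpha} + 1\bigr)\, \hat C_k \;=\; \sum_{j=2}^{k} \frac{p^j \Gamma(j + 1/\alpha)}{\Gamma(1/\alpha)}\, B_{k,j}(\hat C_1, \ldots, \hat C_{k-j+1}).
\]
A straightforward induction, using non-negativity of the Bell polynomial coefficients, yields $0 \le C_k \le \hat C_k$ for all $k \ge 1$. Multiplying this new recursion by $x^k/k!$, summing over $k \ge 2$, and applying the exponential Bell identity $\sum_{k \ge j} B_{k,j}(\hat C_1, \ldots)\, x^k/k! = \hat c(x)^j/j!$, one finds that $\hat c(x) := \sum_{k \ge 1} \hat C_k\, x^k/k!$ satisfies the algebraic equation
\[
\frac{2p + \alpha}{\alpha}\, \hat c(x) \;-\; x \;=\; \bigl(1 - p\hat c(x)\bigr)^{-1/\alpha} - 1.
\]
Setting $G(x, y) := \frac{2p + \alpha}{\alpha} y - (1 - p y)^{-1/\alpha} + 1 - x$, one checks $G(0, 0) = 0$ and $\partial_y G(0, 0) = (p + \alpha)/\alpha \ne 0$, so the analytic implicit function theorem produces a unique analytic solution $\hat c(x)$ near $x = 0$ with $\hat c(0) = 0$, whose Taylor coefficients must be $\hat C_k/k!$. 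Thus $\hat c$ has positive radius of convergence, and the bound $C_k \le \hat C_k$ transfers this to $c$.

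The main obstacle is precisely the convergence step. The ODE is singular and resonant at $x = 0$ (the linearization $x c' = c$ has eigenvalue $1$, a positive integer), so classical existence theorems for analytic singular ODEs do not apply directly, and the initial coefficient $C_1$ must be fixed by external input. The majorant-via-IFT workaround is a clean way to sidestep this, because the resulting algebraic equation for $\hat c$ has a non-degenerate Jacobian at the origin; estimating the original resonant recursion by hand would be substantially more delicate.
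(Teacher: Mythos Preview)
Your proposal is correct, but the convergence argument in part (ii) takes a genuinely different route from the paper's. The paper observes that the right-hand side of the ODE factors as $c(x)f(c(x))$ with $f(0)=1$, separates variables to obtain an implicit relation $c\,e^{F(c)} = Kx$ with $F$ analytic at $0$, and then applies the analytic implicit function theorem \emph{directly} to this relation; the resulting analytic solution must coincide with the formal series once the leading coefficient is matched. In contrast, you never solve the singular ODE itself: you construct a majorant sequence $\hat C_k$ by dropping the factor $k-1$ from the recursion, recognize that $\hat c(x)$ satisfies an \emph{algebraic} (not differential) equation with non-degenerate Jacobian at the origin, and transfer the radius of convergence of $\hat c$ back to $c$ via $0 \le C_k \le \hat C_k$. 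The paper's approach is shorter and yields information about $c$ itself (indeed, exactly this implicit relation is later exploited in the $\alpha=1$ case to get a closed form), whereas your majorant argument is more generic and would continue to work even if the ODE did not separate so cleanly. One minor point: you correctly flag that the resonance forces $C_1$ to be supplied externally, which the paper handles by appealing to Lemma~\ref{lem:supera}; this is consistent with what you do.
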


\begin{proof}
By using the Taylor expansion we see that 
\[ \frac{1}{(1-px)^{1/\alpha}} = 1 + \frac{p}{\alpha}x + O(x^2)\]
which is analytic on $|x| < 1/p$. Therefore, we can rewrite the differential equation in \eqref{eq:diffeqCk} as 
\begin{align*}
xc'(x) &= \left(\frac{\alpha}{p + \alpha}\right)\left(c(x) - 1 + 1 + \frac{p}{\alpha}c(x) + O\left((c(x))^2\right)\right) \\
&= \left(\frac{\alpha}{p + \alpha}\right)\left(\left(\frac{p + \alpha}{\alpha}\right)c(x) + O\left((c(x))^2\right)\right) \\
&= c(x)f(c(x))
\end{align*}
where $f(x) = 1 + O(x)$, so in particular, $f(0) = 1$. Furthermore, $f(x)$ maintains the same radius of convergence as $\frac{1}{(1-px)^{1/\alpha}}$. We solve the above separable differential equation
\[ \int\frac{dx}{x} =\int \frac{dc}{c} + \int\frac{(1-f(c))dc}{cf(c)}\]
to get 
\[ ce^{F(c)} = Kx\]
for some constant $K$, where $F(y) = \int\frac{(1-f(c))dc}{cf(c)}$. The analyticity of $F(x)$ in some neighbourhood of $x=0$ is guaranteed by preservation of analyticity through integration and the analyticity of $\frac{1-f(c)}{cf(c)}$, which is itself analytic due to the analyticity of $f(x)$ and the fact that $f(0) = 1$. Thus, using the implicit value theorem, there exists a unique analytic function $c(x)$ in the neighbourhood of $x=0$ such that $c(0) = 0$. 

To prove the last part of the lemma, it suffices to show that the power series 
\[ c(x) := \sum_{k=1}^\infty \frac{C_k}{k!}x^k\]
satisfies the differential equation \eqref{eq:diffeqCk}. 
Recall the recursion for $C_k$ given in Lemma \ref{lem:supera}, which states that 
\[ \sum_{j=2}^k p^j \frac{\Gamma(j + 1/\alpha)}{\Gamma(1/\alpha)} B_{k,j}(C_1, \ldots, C_{k-j+1}) = G_k = (k-1)(p/\alpha+1)C_k.\]
Recall that 
\[ \sum_{k=1}^{\infty} \frac{\Gamma(k + 1/\alpha)p^k}{\Gamma(1/\alpha)k!}x^k = \frac{1}{(1 - px)^{1/\alpha}} - 1.\]
Then by using known results about composition of functions and Bell polynomials (see e.g. \cite[p.~137, Theorem A]{COMT:74}), 
\begin{align*}
\frac{1}{(1 - pc(x))^{1/\alpha}} - 1 &= \sum_{k=1}^{\infty} \frac{\sum_{j=1}^k p^j \frac{\Gamma(j+1/\alpha)}{\Gamma(1/\alpha)} B_{k,j}(C_1, \ldots, C_{k-j+1})}{k!}x^k \\
&= \sum_{k=1}^\infty \frac{\alpha G_k + pC_k}{\alpha k!}x^k \\
&= \sum_{k=1}^\infty \frac{k(p + \alpha)C_k}{\alpha k!}x^k - \sum_{k=1}^\infty \frac{C_k}{k!}x^k \\
&= \left(\frac{p + \alpha}{\alpha}\right)xc'(x) - c(x),
\end{align*}
which can be rearranged to give \eqref{eq:diffeqCk}. 
\end{proof}

We now have all the tools necessary to prove Theorem \ref{thm:rootclustera>0}.

\begin{proof}[Proof of Theorem \ref{thm:rootclustera>0}]
Using a transfer theorem (see \cite[Corollary VI.1]{FLSE:09}) and Lemma \ref{lem:supera}, 
\[ \left. [x^n] \frac{\partial^k}{\partial u^k} R(x,u)\right|_{u=1} \sim \frac{C_k(1+1/\alpha)^n n^{\frac{kp+\alpha(k-1)}{1 + \alpha} -1}}{\Gamma((kp+\alpha(k-1))/(1+\alpha))}  \]
and 
\[ [x^n] R(x,1) \sim -\frac{(1 + 1/\alpha)^n n^{-\frac{\alpha}{1+\alpha} - 1}}{\Gamma(-\alpha/(1 + \alpha))}.\]
Let $\mathcal{C}_n$ be the root cluster at time $n$. The factorial moments of $|\mathcal{C}_n|$ are extracted from the bivariate generating function (see for example \cite[Proposition III.2]{FLSE:09}) to get 
\[ \mathbb{E}[|\mathcal{C}_n|(|\mathcal{C}_n| - 1)\cdots (|\mathcal{C}_n| - k + 1)] = \frac{[x^n]R_k(x)}{[x^n]R(x,1)} \sim \frac{C_kn^{\frac{k(p+\alpha)}{1 + \alpha}}(1 + \alpha)\Gamma(1/(1+\alpha))}{\alpha\Gamma((kp+\alpha(k-1))/(\alpha+1))} .\]
It can be seen (say by induction) that once expanded and scaled by $n^{k(p+\alpha)/(1 + \alpha)}$, all but the $\mathbb{E}[|C_n|^k]$ term on the left hand side of the above equation vanish to zero, and thus
\[ \mathbb{E}\left[\frac{|\mathcal{C}_n|^k}{n^{k(p+\alpha)/(1 + \alpha)}}\right] \rightarrow \frac{C_k(1+\alpha) \Gamma(1/(1 + \alpha))}{\alpha \Gamma((kp + \alpha(k-1))/(\alpha + 1))} = M_k.\]

For all $k$ large enough, $M_k < C_k$, and so $m(x) = 1 + \sum_{k=1}^\infty \frac{M_k}{k!}x^k$ has greater or equal radius of convergence as $c(x) = \sum_{k=1}^\infty \frac{C_k}{k!}x^k$, which is guaranteed to be nonzero by Lemma \ref{lem:Ckpowerseries}. Let $\mathcal{C}$ be the distribution uniquely determined by its moments $M_k$. Then by using the method of moments, we have shown that 
\[ \frac{|\mathcal{C}_n|}{n^{(p+\alpha)/(1 + \alpha)}} \xrightarrow{d} \mathcal{C}.\]
\end{proof}

In general, we were unable to derive a closed form for $C_k$. We were, however, able to derive a closed form when $\alpha = 1$.

\begin{proof}[Proof of Theorem \ref{thm:rootclustera=1}]
We use Lemma \ref{lem:Ckpowerseries}, and replace $\alpha$ with 1 to get 
\[c'(x) = \frac{1}{x(p+1)} \left( \frac{1}{1-p c(x)} + c(x) - 1 \right) = \frac{c(x)(1+p-pc(x))}{x(p+1)(1-pc(x))},\]
which is rewritten as
\[\int \frac{dx}{x} = \int \frac{(p+1)(1-pc)}{c(1+p-pc)}\,dc = \int \left( \frac{1}{c} - \frac{p^2}{1+p-pc} \right)\,dc.\]
So
\[\ln x = \ln c + p\ln (1+p-p c) + K.\]
Since we know that $c(x) = \frac{x}{p+1} + O(x^2)$, the constant $K$ is $(1-p)\ln(p+1)$. So
\[\ln c(x) = \ln x - p \ln(1+p-pc(x)) +(p-1)\ln(p+1)\]
or
\[c(x) = \frac{x}{p+1} \left(1 - \frac{p}{p+1} c(x) \right)^{-p}.\]
Applying the Lagrange inversion formula (see for example \cite[Theorem A.2]{FLSE:09}) to this functional equation yields
\[[x^k] c(x) = \frac{1}{k} [t^{k-1}] \frac{1}{(p+1)^k} \left(1 - \frac{pt}{p+1} \right)^{-kp} = \frac{1}{k(p+1)^k} \left( \frac{p}{p+1} \right)^{n-1} \binom{kp+k-2}{k-1}.\]
So finally
\[C_k = k! [x^k] c(x) = \frac{(k-1)! p^{k-1}}{(p+1)^{2k-1}} \binom{kp+k-2}{k-1} = \frac{p^{k-1}\Gamma(kp+k-1)}{(p+1)^{2k-1}\Gamma(kp)}.\]
Theorem \ref{thm:rootclustera=1} now follows from the above derivation and Theorem \ref{thm:rootclustera>0}. 
\end{proof}

%------- b-ary trees ---------%

We turn our attention to the case $\alpha = -1/d$ for some integer $d \geq 2$, and $\alpha > -p$. In this case the functions $B(x)$ and $R(x,1)$ are equal to the generating function for increasing $d$-ary trees, which is known (see for example \cite[Lemma 6.5]{DRMO:09}) to be
\[ B(x) = R(x,1) = (1 - (d-1)x)^{-\frac{1}{d-1}} - 1.\]
Once more, we were unable to derive a closed form for $R(x,u)$ in this case. Recall the notation
\begin{equation*}
R_k(x) := \left. \frac{\partial^k}{\partial u^k}R(x,u)\right|_{u=1},
\end{equation*}
and 
\begin{equation*}
R_0(x) := R(x,1) = (1 - (d-1)x)^{-\frac{1}{d-1}} - 1.
\end{equation*}

\begin{lemma}\label{lem:superb}
Let $d\geq 2 $ be a positive integer and let $p > 1/d$. Then $R_k(x)$ is analytic on the cut plane 
\[ \mathbb{C}\setminus [1/(d-1), \infty),\]
and 
\[ R_k(x) = D_k(1 - (d-1)x)^{\frac{-kpd + k -1}{d-1}} + O\left((1 - (d-1)x)^{\frac{-kpd + k -1}{d-1} + \varepsilon}\right), \]
for some $\varepsilon > 0$, where $D_k$ satisfies the recursion $D_1 = 1/(pd-1)$ and 
\begin{equation*}
(k-1)(pd-1)D_k = \sum_{j=2}^{\min\{k,d\}} \frac{p^jd!}{(d-j)!} B_{k,j}(D_1, \ldots, D_{k-j+1}).
\end{equation*}
\end{lemma}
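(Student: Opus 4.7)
My plan is to mirror the inductive proof of Lemma \ref{lem:supera}, with two structural changes that are driven by the different choice of $\phi$. Since $\phi(\delta)=d!/(d-\delta)!$ vanishes for $\delta>d$, the inner generating series in the recursion \eqref{eq:recursion} collapses to a polynomial rather than a $(1-\cdot)^{-1/\alpha}$ singularity; this is what will produce the truncated sum $\sum_{j=2}^{\min\{k,d\}}$ in the recursion for $D_k$. The condition $p>1/d$ (equivalently $pd-1>0$) will play the same role the condition $\alpha>0$ played in Lemma \ref{lem:supera}: it guarantees that a certain ``correction'' term is genuinely of lower singular order than the leading one.

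The first step is to translate the recursion \eqref{eq:recursion} into a partial differential equation. Using the identity $\sum_{\delta=0}^{\infty}\frac{\phi(\delta)}{\delta!}y^\delta=(1+y)^d$ one obtains
\[
\frac{\partial}{\partial x}R(x,u)=u\bigl(1+pR(x,u)+(1-p)B(x)\bigr)^d,
\]
and substituting the closed form $B(x)=(1-(d-1)x)^{-1/(d-1)}-1$ yields an equation entirely in $x$, $u$, and $R$. Differentiating $k$ times in $u$ and evaluating at $u=1$, using the product rule, gives
\[
R_k'(x)=\Bigl(\frac{\partial^k}{\partial u^k}f(R(x,u))+k\frac{\partial^{k-1}}{\partial u^{k-1}}f(R(x,u))\Bigr)\Big|_{u=1},
\]
where $f(y)=\bigl(p+py+(1-p)(1-(d-1)x)^{-1/(d-1)}\bigr)^d$. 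Because $f$ is a polynomial of degree $d$ in $y$, we have $f^{(j)}\equiv 0$ for $j>d$, so Fa\`a di Bruno's formula automatically truncates:
\[
\frac{\partial^k}{\partial u^k}f(R(x,u))\Big|_{u=1}=\sum_{j=1}^{\min\{k,d\}}f^{(j)}(R_0(x))\,B_{k,j}(R_1(x),\ldots,R_{k-j+1}(x)).
\]
Using $1+pR_0+(1-p)B=(1-(d-1)x)^{-1/(d-1)}$, a direct computation gives
\[
f^{(j)}(R_0(x))=\frac{d!}{(d-j)!}\,p^j\,(1-(d-1)x)^{-(d-j)/(d-1)}.
\]

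I will now do strong induction on $k$. For the base case $k=1$, the equation reduces to the linear ODE $R_1'(x)=\frac{dp}{1-(d-1)x}R_1(x)+(1-(d-1)x)^{-d/(d-1)}$, which solves to
\[
R_1(x)=\frac{1}{pd-1}\Bigl((1-(d-1)x)^{-pd/(d-1)}-(1-(d-1)x)^{-1/(d-1)}\Bigr),
\]
whose leading singular term matches the claim with $D_1=1/(pd-1)$. Analyticity on $\mathbb{C}\setminus[1/(d-1),\infty)$ is immediate from the explicit form. For the inductive step, I separate the $j=1$ term of the Fa\`a di Bruno expansion, which contributes $\frac{dp}{1-(d-1)x}R_k(x)$ on the right-hand side, and group the rest into $g_k(x)$. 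By the induction hypothesis and a short exponent calculation (using $\sum i m_i=k,\ \sum m_i=j$), the product $f^{(j)}(R_0(x))\,B_{k,j}(R_1,\ldots,R_{k-j+1})$ has leading singular exponent $(-kpd+k-d)/(d-1)$ with coefficient $\frac{d!}{(d-j)!}p^j B_{k,j}(D_1,\ldots,D_{k-j+1})$, independent of $j$. Summing over $j=2,\ldots,\min\{k,d\}$, the leading part of $g_k$ equals $G_k\cdot(1-(d-1)x)^{(-kpd+k-d)/(d-1)}$ with $G_k$ the right-hand side of the claimed recursion. Solving the resulting linear ODE $R_k'-\frac{dp}{1-(d-1)x}R_k=G_k(1-(d-1)x)^{(-kpd+k-d)/(d-1)}+(\text{error})$ yields the stated form with $D_k=G_k/((k-1)(pd-1))$, and analyticity on the cut plane follows because integration preserves it.

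The main technical point, and the place where the hypothesis $p>1/d$ is essential, is verifying that the term $k\frac{\partial^{k-1}}{\partial u^{k-1}}f(R(x,u))|_{u=1}$ is strictly subdominant. A parallel exponent bookkeeping shows this term is $O((1-(d-1)x)^{(-(k-1)pd+(k-1)-d)/(d-1)})$, whose exponent exceeds the leading exponent $(-kpd+k-d)/(d-1)$ of $g_k$ by exactly $(pd-1)/(d-1)>0$; hence it contributes only to the error, and the induction closes with some uniform $\varepsilon>0$. Everything else, such as carefully checking that lower-order corrections from $g_k$ (from the $O(\cdot)$ terms in $R_1,\ldots,R_{k-1}$ fed into the Bell polynomial) remain $O$-terms after integration, is a routine calculation analogous to the one already carried out in the proof of Lemma \ref{lem:supera}.
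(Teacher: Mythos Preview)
Your proposal is correct and follows essentially the same approach as the paper's own proof: the same PDE derived from \eqref{eq:recursion} with $\phi(\delta)=d!/(d-\delta)!$, the same explicit solution for $R_1$, the same Fa\`a di Bruno expansion with $f^{(j)}(R_0(x))=\tfrac{d!}{(d-j)!}p^j(1-(d-1)x)^{-(d-j)/(d-1)}$ (and the automatic truncation at $j=d$), and the same linear ODE argument yielding $D_k=H_k/((k-1)(pd-1))$. Your bookkeeping for the subdominance of the $k\,\partial^{k-1}/\partial u^{k-1}$ term via the exponent gap $(pd-1)/(d-1)>0$ matches the paper's verbatim.
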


Since the proof of Lemma \ref{lem:superb} follows much the same way as the proof of Lemma \ref{lem:supera}, the argument is relegated to Appendix \ref{App:LemmaProofs}. Much like the case above for $\alpha >0$, we will prove the existence of the moment generating function of our limiting distribution in a neighbourhood of $0$, and this is done by studying the exponential generating function of $D_k$. 

\begin{lemma}\label{lem:Dkpowerseries}
The differential equation 
\begin{equation}\label{eq:diffeqDk}
 xt'(x) = \frac{(1+pt(x))^d - t(x) - 1}{pd-1}
\end{equation}
has a unique analytic solution for some neighbourhood around $x=0$. Furthermore, this solution can be written as 
\[ t(x) := \sum_{k=1}^\infty \frac{D_k}{k!}x^k.\]
\end{lemma}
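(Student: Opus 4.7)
The plan is to mirror the proof of Lemma \ref{lem:Ckpowerseries}. First I would expand the right-hand side of the ODE. Since $(1+py)^d = 1 + pdy + \sum_{j=2}^d \binom{d}{j} p^j y^j$, one has $(1+pt)^d - t - 1 = (pd-1)t + \sum_{j=2}^d \binom{d}{j}p^j t^j$, so the equation can be rewritten as $xt'(x) = t(x)g(t(x))$, where $g(t) = 1 + \sum_{j=2}^d \frac{\binom{d}{j}p^j}{pd-1}t^{j-1}$ is a polynomial with $g(0)=1$.

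Next I would apply separation of variables. The decomposition $\frac{1}{tg(t)} = \frac{1}{t} + \frac{1-g(t)}{tg(t)}$ exhibits the second summand as analytic near $t=0$ (since $1-g(t)=O(t)$ and $g(0)=1$), so integrating both sides of $\frac{dt}{tg(t)} = \frac{dx}{x}$ yields $\ln t + F(t) = \ln x + K$ for some $F$ analytic near $0$ with $F(0)=0$, equivalently $t e^{F(t)} = Cx$ for a constant $C$. Since $\frac{d}{dt}(t e^{F(t)})$ equals $1$ at $t=0$, the implicit function theorem produces, for each choice of $C$, a unique analytic function $t(x)$ near $x=0$ with $t(0)=0$ satisfying this relation; the leading-order behaviour $t(x) \sim Cx$ forces $C = D_1 = 1/(pd-1)$ to match the desired base case from Lemma \ref{lem:superb}.

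Finally, to identify this analytic solution with $\sum_{k\geq 1} \frac{D_k}{k!}x^k$, I would verify that the coefficients $D_k$ from Lemma \ref{lem:superb} satisfy the ODE formally. Using Faà di Bruno together with $\frac{d^j}{dy^j}(1+py)^d\big|_{y=0} = \frac{d!}{(d-j)!}p^j$ for $j\leq d$ (and $0$ otherwise), one obtains
\[ (1+pt(x))^d = 1 + \sum_{k\geq 1}\frac{x^k}{k!}\sum_{j=1}^{\min\{k,d\}} \frac{d!}{(d-j)!}p^j B_{k,j}(D_1, \ldots, D_{k-j+1}).\]
Subtracting $t(x)+1$, dividing by $pd-1$, and equating with $xt'(x) = \sum_{k\geq 1}\frac{kD_k}{k!}x^k$, the $j=1$ term contributes $dpD_k$, which together with the remaining $D_k$ terms reduces the identity at order $x^k$ to
\[ (k-1)(pd-1)D_k = \sum_{j=2}^{\min\{k,d\}} \frac{d!}{(d-j)!}p^j B_{k,j}(D_1, \ldots, D_{k-j+1}),\]
precisely the recursion supplied by Lemma \ref{lem:superb}.

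The main difficulty is bookkeeping rather than any conceptual hurdle: the existence-and-uniqueness half is a direct adaptation of Lemma \ref{lem:Ckpowerseries} with the polynomial $(1+py)^d$ replacing $(1-py)^{-1/\alpha}$, and the identification half is a careful Faà di Bruno comparison. One subtlety worth flagging is that $x=0$ is a singular point of the ODE, so the coefficient $D_1$ is a free parameter of the equation itself and must be pinned down separately by the base case from Lemma \ref{lem:superb}.
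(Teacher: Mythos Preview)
Your proposal is correct and follows essentially the same approach as the paper: rewrite the equation as $xt'(x)=t(x)g(t(x))$ with $g$ a polynomial satisfying $g(0)=1$, separate variables and invoke the implicit function theorem for existence, then verify via Fa\`a di Bruno that the formal power series $\sum_{k\ge1}D_kx^k/k!$ satisfies the ODE by reducing to the recursion of Lemma~\ref{lem:superb}. Your observation that $x=0$ is a singular point and that the leading coefficient $D_1$ must be pinned down by the base case is a useful clarification that the paper itself leaves implicit.
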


\begin{proof}
By using the Binomial Theorem, we rewrite the differential equation as
\begin{align*}
xt'(x) &= \frac{1 + pdt(x) + \sum_{k=2}^d\binom{d}{k}(pt(x))^k -  t(x) - 1}{pd-1}\\
&= t(x) + \left(\frac{1}{pd-1}\right)\sum_{k=2}^d\binom{d}{k}(pt(x))^k \\
&= t(x)g(t(x)),
\end{align*}
where 
\[ g(x) = 1 + \left(\frac{1}{pd-1}\right)\sum_{k=2}^d\binom{d}{k}p^kx^{k-1},\]
which is simply a polynomial (and so an entire function), and $g(0) = 1$. The remainder of the existence part of the proof now follows much the same as that of Lemma \ref{lem:Ckpowerseries}. 

To prove the last part of the theorem,  recall the recursion of $D_k$ given in Lemma \ref{lem:superb}. Then
\[ \sum_{j=2}^{\min\{k,d\}} p^j\frac{d!}{(d-j)!} B_{k,j}(D_1, \ldots, D_{k-j+1}) = (k-1)(pd-1)D_k.\]
By using known results about composition of functions and Bell polynomials,
\begin{align*}
 (1 + pt(x))^d - 1 &= \sum_{k=1}^\infty \frac{\sum_{j=1}^{\min\{k,d\}} p^j\frac{d!}{(d-j)!} B_{k,j}(D_1, \ldots, D_{k-j+1})}{k!}x^k \\
&= \sum_{k=1}^\infty \frac{(k-1)(pd-1)D_k + pdD_k}{k!}x^k \\
&= \sum_{k=1}^\infty \frac{k(pd-1)D_k}{k!}x^k + \sum_{k=1}^\infty \frac{D_k}{k!}x^k \\
&= (pd-1)xt'(x) + t(x),
\end{align*}
which can be rearranged to give \eqref{eq:diffeqDk}. 
\end{proof}

The proof of  Theorem \ref{thm:rootclusterbsup} now follows in much the same way as the proof of Theorem~\ref{thm:rootclustera>0}.

\begin{proof}[Proof of Theorem \ref{thm:rootclusterbsup}]
Using a transfer theorem (see again \cite[Corollary VI.1]{FLSE:09}) and Lemma \ref{lem:superb}, 
\[ \left. [x^n] \frac{\partial^k}{\partial u^k} R(x,u)\right|_{u=1} \sim \frac{D_k(d-1)^n n^{\frac{kpd - k +1}{d-1} -1}}{\Gamma((kpd - k +1)/(d-1))}  \]
and 
\[ [x^n] R(x,1) \sim \frac{(d-1)^n n^{\frac{1}{d-1} - 1}}{\Gamma(1/(d-1))}.\]
Let $\mathcal{C}_n$ be the root cluster at time $n$. The factorial moments of $|\mathcal{C}_n|$ are extracted from the bivariate generating function (see for example \cite[Proposition III.2]{FLSE:09}), and once scaled by $n^{k(pd-1)/(d-1)}$, we get 
\[ \mathbb{E}\left[\frac{|\mathcal{C}_n|^k}{n^{k(pd-1)/(d-1)}}\right] \rightarrow \frac{D_k \Gamma(1/(d-1))}{\Gamma((kpd - k + 1)/(d-1))} = M_k.\]
For all $k$ large enough, $M_k < D_k$, and so $m(x) = 1 + \sum_{k=1}^\infty \frac{M_k}{k!}x^k$ has a greater or equal radius of convergence as $t(x) = \sum_{k=1}^\infty \frac{D_k}{k!}x^k$, which is guaranteed to be nonzero by Lemma \ref{lem:Dkpowerseries}. Let $\mathcal{C}$ be the distribution uniquely determined by its moments $M_k$. Then by using the method of moments, we have shown that 
\[ \frac{|\mathcal{C}_n|}{n^{(pd-1)/(d-1)}} \xrightarrow{d} \mathcal{C}.\]

\end{proof}

We were unable to find a closed form for $D_k$ in general. However, a closed form can be found in the case of binary search trees, when $d=2$.

\begin{proof}[Proof of Theorem \ref{thm:rootclusterb=2}]
We use Lemma \ref{lem:Dkpowerseries} and replace $d$ with 2 to get
\[ t'(x) = \frac{t(x)(p^2t(x) + 2p - 1)}{(2p-1)x},\]
which is rewritten as 
\[ \int\frac{dx}{x} = \int \frac{2p-1}{t(x)(p^2t(x) + 2p - 1)}dt(x) = \int\left( \frac{1}{t(x)} - \frac{p^2}{p^2t(x)+2p-1}\right)dt(x).\]
So 
\[ \ln x = \ln t(x) - \ln(p^2t(x) + 2p - 1) + K.\]
Since $t(x) = \frac{x}{2p-1} + O(x^2)$, the constant $K$ is $2\ln(2p-1)$, so 
\[t(x) = x\left(\frac{1}{2p-1} + \frac{p^2t(x)}{(2p-1)^2}\right).\]
The Lagrange inversion formula yields
\[ [x^k]t(x) = \frac{1}{k}[y^{k-1}]\frac{1}{(2p-1)^k}\left(1 + \frac{p^2 y}{2p-1}\right)^k = \binom{k}{k-1}\frac{p^{2(k-1)}}{k(2p-1)^{2k-1}}.\]
So
\[ D_k = k![x^k]t(x) = \frac{k!p^{2(k-1)}}{(2p-1)^{2k-1}}.\]
Theorem \ref{thm:rootclusterb=2} now follows from the above derivation and Theorem \ref{thm:rootclusterbsup}. 
\end{proof}

%---------- CRITICAL AND SUBCRITICAL CASE ----------%

We now look at the cases when the root cluster is finite. Our strategy in these cases is to look at bond percolation on the complete infinite $d$-ary tree $T_d$. The root cluster $\mathcal{K}_d$ after performing bond percolation on $T_d$ is distributed as a Galton-Watson tree with binomial $\text{Bin}(d,p)$ offspring distribution. The  size (total progeny) of such (finite) trees is known to follow 
\[ \mathbb{P}(|\mathcal{K}_d|= k) = \frac{1}{k}\mathbb{P}(X_1 + \cdots + X_k = k-1)\]
where $X_1, \ldots, X_k$ are independent binomial random variables $X_i \sim \text{Bin}(d,p)$ (this result was proved by Otter \cite{OTTE:49}; a more general result was proved by Dwass \cite{DWAS:69}. See also \cite[Exercises 2.2--2.4]{HEHO:17}). Thus
\begin{equation}\label{eq:completecluster}
\mathbb{P}(|\mathcal{K}_d| = k) = \frac{1}{k}\binom{kd}{k-1}p^{k-1}(1-p)^{kd - k +1}.
\end{equation}
If we now let $\mathcal{T}_{n}$ be the rooted subtree of $T_d$ corresponding to a random increasing $d$-ary tree at time $n$, then $\mathcal{C}_n \sim \mathcal{T}_n \cap \mathcal{K}_d$ is distributed as the root cluster of $\mathcal{T}_n$ with a random broadcasting induced colouring $\sigma_n$, where the intersection is the subtree of both $\mathcal{T}_n$ and $\mathcal{K}_d$. For example in Figure \ref{fig:subcritical}, we see a tree $\mathcal{T}_{9}$, with thick edges in the figure, grown on a complete infinite 3-ary tree $T_3$. Bond percolation has been performed on $T_3$ (dashed edges represent edges that were removed), and the root cluster $\mathcal{K}_3$ is shown surrounded by dotted lines. The root cluster $\mathcal{C}_{9}$ of $\mathcal{T}_{9}$ is the intersection of $\mathcal{K}_3$ and $\mathcal{T}_{9}$.

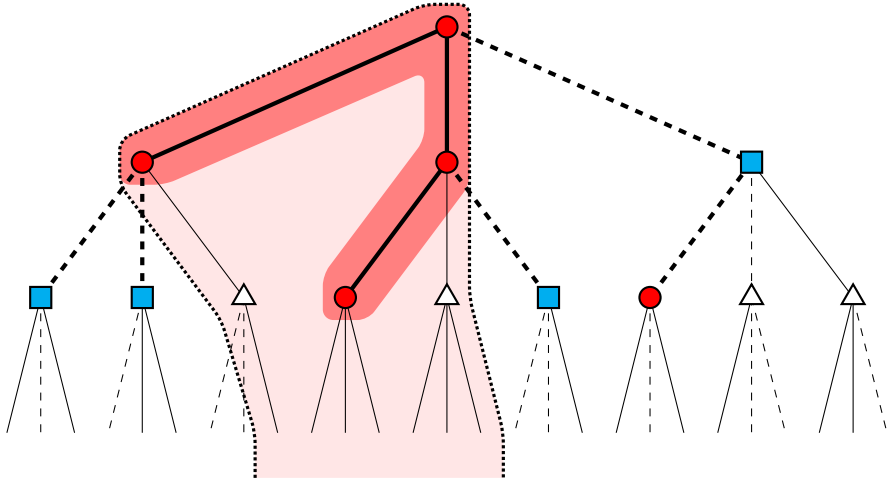
\begin{figure}[h!]
\centering
\begin{tikzpicture}[scale =1.5,
redv/.style={circle, inner sep=0pt, draw=black, fill=red, thick, minimum size=8pt},
bluv/.style={rectangle, inner sep=0pt, draw=black, fill=cyan, thick, minimum size=8pt},
nonv/.style={regular polygon, regular polygon sides=3, inner sep=0pt, draw=black, fill=white, thick, minimum size=10pt}
]

%INCREASING TREE
\fill[fill=red!10, rounded corners] (2.4, 2) -- (2.4, 2.4)  -- ( 2.1, 3.4) -- (1.2, 4.6) -- (1.2, 5) -- (3.9, 6.2) -- (4.1, 6.2) -- (4.3, 6.2) -- (4.3, 4.6) -- (4.3, 3.6) -- (4.6, 2.4) -- (4.6, 2);

%C_d
\fill[fill=red!50, rounded corners]  
 (4.1, 6.2) -- (4.3, 6.2) -- (4.3, 4.6) -- (3.4, 3.4) -- (3, 3.4) -- (3, 3.8) -- (3.9, 5) -- (3.9, 5.6) -- (1.6, 4.6) -- (1.2, 4.6) -- (1.2, 5) -- (3.9, 6.2) -- (4.1, 6.2);

%C_d
\draw[ very thick, rounded corners, densely dotted] (2.4, 2) -- (2.4, 2.4)  -- ( 2.1, 3.4) -- (1.2, 4.6) -- (1.2, 5) -- (3.9, 6.2) -- (4.1, 6.2) -- (4.3, 6.2) -- (4.3, 4.6) -- (4.3, 3.6) -- (4.6, 2.4) -- (4.6, 2);

%EDGES
\draw[ultra thick] (4.1,6) -- (1.4, 4.8);
\draw[ultra thick] (4.1, 6) -- (4.1, 4.8);
\draw[ultra thick, dashed] (4.1, 6) -- (6.8, 4.8);

\draw[ultra thick, dashed] (1.4, 4.8) -- (.5, 3.6);
\draw[ultra thick, dashed] (1.4, 4.8) -- (1.4, 3.6);
\draw[] (1.4, 4.8) -- (2.3, 3.6);
\draw[ultra thick] (4.1, 4.8) -- (3.2, 3.6);
\draw (4.1, 4.8) -- (4.1, 3.6);
\draw[ultra thick, dashed] (4.1, 4.8) -- (5, 3.6);
\draw[ultra thick, dashed] (6.8, 4.8) -- (5.9, 3.6);
\draw[dashed] (6.8, 4.8) -- (6.8, 3.6);
\draw (6.8, 4.8) -- (7.7, 3.6);

\foreach \x in {0, ..., 26}{
	\ifthenelse{\x = 1 \OR \x=3 \OR \x=6 \OR \x=7 \OR \x=15 \OR \x=16\OR \x=19 \OR \x=22 \OR \x=23 \OR \x=26}{
		\draw[ dashed] ({.5 + .9*floor(\x/3)}, 3.6) -- (.2+.3*\x, 2.4);}{
		\draw ({.5 + .9*floor(\x/3)}, 3.6) -- (.2+.3*\x, 2.4);}}

%VERTICES
\node[redv] at (4.1, 6){};

\node[redv] at (1.4, 4.8){};
\node[redv] at (4.1, 4.8){};
\node[bluv] at (6.8, 4.8){};
\foreach \x in {0,...,8}{
	\ifthenelse{\x=0 \OR \x=1 \OR \x=3  \OR \x=5 \OR \x=6}{
	\ifthenelse{\x=0 \OR \x=1 \OR \x=5}{
		\node[bluv] at (.5+.9*\x, 3.6){};}{
		\node[redv] at (.5+.9*\x, 3.6){};}}{
		\node[nonv] at (.5+.9*\x, 3.6){};}}

\end{tikzpicture}
\caption{A random increasing 3-ary tree is grown on a complete infinite 3-ary tree with bond percolation performed. The root cluster $\mathcal{C}_{9}$ has 4 vertices at this stage. }
\label{fig:subcritical}
\end{figure}

\begin{proof}[Proof of Theorem \ref{thm:rootclusterba.s.}]
First note that with the set-up above, $|\mathcal{C}_n| \leq |\mathcal{K}_d|$. 
We look at the {\em fill-up} or {\em saturation} level $\overline{H}_n$ of $\mathcal{T}_n$, the greatest value $m$ for which $\mathcal{T}_n$ has $d^m$ vertices at distance $m$ from the root.  From \cite[Theorem 6.46]{DRMO:09}, we know that 
\begin{equation}\label{eq:filluplevel}
 \mathbb{E}[\overline{H}_n] \sim a\ln{n}, \hspace{10mm} \mathbb{P}\left(|\overline{H}_n -\mathbb{E}[\overline{H}_n]| \geq \eta \right) = O(e^{-c\eta}) 
\end{equation}
for some constants $a,c > 0$ (not depending on $n$). 

First consider the event $|\mathcal{K}_d| = \infty$. Then some vertex of $\mathcal{K}_d$ is present at every level of $T_d$.  From this, we get the rough bound $|\mathcal{C}_n| \geq \overline{H}_n$, which grows to infinity almost surely thanks to \eqref{eq:filluplevel}. 

Now condition on the event $|\mathcal{K}_d| < \infty$. First we prove convergence in probability. Then almost sure convergence follows from convergence in probability and the fact that $\{|\mathcal{C}_n|\}_{n=1}^\infty$ are increasing random variables (see for example \cite[ch. 5, Theorem 3.5]{GUT:13}). Letting $h(\mathcal{K}_d)$ be the height of $\mathcal{K}_d$, then it is immediate that if $\overline{H}_n \geq h(\mathcal{K}_d)$, then $\mathcal{C}_n = \mathcal{K}_d$. Therefore, 
\begin{align*}
\mathbb{P}(|\mathcal{C}_n - \mathcal{K}_d| >0) &\leq \mathbb{P}(\overline{H}_n < h(\mathcal{K}_d)) \\
&\leq \mathbb{P}\left(\overline{H}_n < \frac{a}{2}\ln{n}\right) + \mathbb{P}\left(h(\mathcal{K}_d) > \frac{a}{2}\ln{n}\right).
\end{align*}
The last line tends to zero thanks to \eqref{eq:filluplevel} and the fact that $\mathcal{K}_d$ is finite. The desired convergence in probability is then achieved, completing the proof.
\end{proof}

\begin{proof}[Proof of Corollary \ref{thm:rootclusterbsub}]
This follows immediately from Theorem \ref{thm:rootclusterba.s.} and \eqref{eq:completecluster}.
\end{proof}

When $p < 1/d$, the distribution described by \eqref{eq:completecluster} has finite moments. Since $\{|\mathcal{C}_n|\}_{n=1}^\infty$ consists of increasing positive random variables bounded by $\mathcal{K}_d$, then their moments are uniformly bounded by the moments of $\mathcal{K}_d$. Thus, along with the almost sure convergence of Theorem \ref{thm:rootclusterba.s.}, convergence in all moments holds as well (see \cite[ch. 5, Theorem 5.2]{GUT:13}).

Howewer, when $p= 1/d$, the distribution described by \eqref{eq:completecluster} does not even have finite expectation. 
We can, however, derive asymptotic results for the moments of $|\mathcal{C}_n|$. 

%---------- CRITICAL ----------%

We start by once more approximating the functions $R_k(x)$.

\begin{lemma}\label{lem:crit}
Let $d\geq 2 $ be a positive integer and let $p = 1/d$. Then $R_k(x)$ is analytic on the cut plane 
\[ \mathbb{C}\setminus [1/(d-1), \infty),\]
and 
\begin{align*}
 R_k(x) &= -E_k(1-(d-1)x)^{\frac{-1}{d-1}}\ln^{2k-1}(1-(d-1)x) \\
&\hspace{20mm} + O\left((1-(d-1)x)^{\frac{-1}{d-1}}\ln^{2k-2}(1-(d-1)x)\right).
\end{align*}
where $E_k$ satisfies the recursion $E_1 = 1/(d-1)$ and 
\begin{equation*}
(2k-1)E_k = \frac{1}{2d}\sum_{j=1}^{k-1}\binom{k}{j}E_j E_{k-j}.
\end{equation*}
\end{lemma}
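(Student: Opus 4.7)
The plan is to mirror the proof of Lemma~\ref{lem:superb} (and Lemma~\ref{lem:supera}), proceeding by strong induction on $k$ after first deriving a linear first-order ODE for $R_k(x)$. Starting from the recursion in~\eqref{eq:recursion} with $\phi(\delta)=d!/(d-\delta)!$, the finite sum over $\delta$ collapses via the binomial theorem to a $d$-th power, yielding the PDE
\[
\frac{\partial}{\partial x} R(x,u) = u\bigl(1 + pR(x,u) + (1-p)B(x)\bigr)^d.
\]
Setting $p=1/d$ and using $1+R_0(x) = (1-(d-1)x)^{-1/(d-1)}$, differentiating $k$ times in $u$ at $u=1$ via Fa\`a di Bruno's formula separates off the $j=1$ piece (which simplifies cleanly because $dp = 1$) from the nonlinear tail, producing an ODE of the form
\[
R_k'(x) - \frac{R_k(x)}{1-(d-1)x} = \widetilde{G}_k(x),
\]
where $\widetilde{G}_k(x)$ gathers the $j\geq 2$ contributions to $(\partial^k/\partial u^k)(1+pR+(1-p)B)^d\big|_{u=1}$ together with the subleading term $k(\partial^{k-1}/\partial u^{k-1})(1+pR+(1-p)B)^d\big|_{u=1}$.

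For the base case $k=1$, $\widetilde{G}_1(x)$ reduces to the pure inhomogeneity $(1-(d-1)x)^{-d/(d-1)}$; solving with integrating factor $(1-(d-1)x)^{1/(d-1)}$ and initial condition $R_1(0)=0$ yields
\[
R_1(x) = -\frac{1}{d-1}(1-(d-1)x)^{-1/(d-1)}\ln(1-(d-1)x),
\]
which matches $E_1 = 1/(d-1)$. For the inductive step, substituting the ansatz for $R_1, \ldots, R_{k-1}$ into $B_{k,j}(R_1,\ldots,R_{k-j+1})$ yields a contribution of order $(1-(d-1)x)^{-j/(d-1)}\ln^{2k-j}(1-(d-1)x)$ (the log exponent coming from $\sum(2i-1)m_i = 2k-j$ under the Bell-polynomial constraints $\sum m_i = j$, $\sum i m_i = k$); multiplied by the Fa\`a di Bruno prefactor $\tfrac{d!}{(d-j)!}p^j (1-(d-1)x)^{(j-d)/(d-1)}$ this places every $j\geq 2$ piece at singular order $(1-(d-1)x)^{-d/(d-1)}$ with log power $2k-j$. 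The dominant term is therefore $j=2$, equal asymptotically to
\[
\frac{d-1}{2d}\left(\sum_{j=1}^{k-1}\binom{k}{j}E_j E_{k-j}\right)(1-(d-1)x)^{-d/(d-1)}\ln^{2k-2}(1-(d-1)x),
\]
with the $j\geq 3$ pieces and the subleading $(\partial^{k-1}/\partial u^{k-1})$ contribution absorbed into an error of order $(1-(d-1)x)^{-d/(d-1)}\ln^{2k-3}(1-(d-1)x)$.

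Applying the integrating factor $(1-(d-1)x)^{1/(d-1)}$ and the substitution $v = -\ln(1-(d-1)x)$ (so that $dx/(1-(d-1)x) = dv/(d-1)$) reduces the ODE to
\[
\frac{d}{dv}\!\left[(1-(d-1)x)^{1/(d-1)} R_k(x)\right] = \frac{1}{2d}\,v^{2k-2}\sum_{j=1}^{k-1}\binom{k}{j}E_j E_{k-j} + O(v^{2k-3}),
\]
and one integration picks up precisely one extra power of $v$, giving
\[
R_k(x) \sim -\frac{1}{2d(2k-1)}\left(\sum_{j=1}^{k-1}\binom{k}{j}E_j E_{k-j}\right)(1-(d-1)x)^{-1/(d-1)}\ln^{2k-1}(1-(d-1)x),
\]
which identifies $E_k$ and confirms the stated recursion. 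Analyticity on the cut plane $\mathbb{C}\setminus[1/(d-1),\infty)$ is preserved at every step since the ODE solution formula and the inductive hypothesis only involve arithmetic operations, integration, and powers and logarithms of the branch-cut function $1-(d-1)x$. The main technical obstacle will be the bookkeeping of logarithmic powers: verifying that the $j=2$ Fa\`a di Bruno term strictly dominates both the $j\geq 3$ contributions (same singular order, strictly smaller log power) and the subleading $k(\partial^{k-1}/\partial u^{k-1})$ term, which reduces to a careful case analysis of the exponent identity $\sum(2i-1)m_i = 2k-j$ under $\sum m_i = j$ and $\sum i m_i = k$.
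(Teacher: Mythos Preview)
Your proposal is correct and follows essentially the same approach as the paper: both derive the same PDE, apply Fa\`a di Bruno to isolate the $j=1$ linear piece from the $j\geq 2$ tail, identify the $j=2$ Bell-polynomial term as dominant (with exactly the coefficient $\frac{d-1}{2d}\sum_{j}\binom{k}{j}E_jE_{k-j}$), and solve the resulting ODE with integrating factor $(1-(d-1)x)^{1/(d-1)}$. Your explicit bookkeeping via the identity $\sum_i (2i-1)m_i = 2k-j$ under the Bell-polynomial constraints is a nice touch that the paper leaves implicit.
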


Since the proof of Lemma \ref{lem:crit} also follows much the same way as the proof of Lemma \ref{lem:supera}, the argument is relegated to Appendix \ref{App:LemmaProofs}.

\begin{proof}[Proof of Theorem \ref{thm:rootclusterbcrit}]
From the approximations of the functions $R_k(x)$ in the previous proofs, we conclude by a transfer theorem \cite[Corollary VI.1]{FLSE:09} (or also \cite[Th\'eor\`eme A]{JUNG:31}) that 
\[\left. [x^n] \frac{\partial^k}{\partial u^k} R(x,u)\right|_{u=1} \sim \frac{E_k(d-1)^nn^{\frac{1}{d-1} - 1}}{\Gamma(1/(d-1))}\ln^{2k-1}n,\]
and 
\[ [x^n]R(x,1) \sim \frac{(d-1)^n n^{\frac{1}{d-1} - 1}}{\Gamma(1/(d-1))}.\]
Therefore, we see that 
\[ \mathbb{E}[|\mathcal{C}_n|^k] = \frac{[x^n]R_k(x)}{[x^n]R(x,1)} \sim E_k\ln^{2k-1}n.\]
\end{proof}

\section*{Acknowledgment}
We would like to thank Svante Janson and Gabriel Berzunza Ojeda for helpful discussions related to this work. We would further like to thank Gabriel Berzunza Ojeda for pointing us towards the references on percolation in random recursive trees.

%-----------------------------------------%
%---------- APPENDICES ----------%
%-----------------------------------------%

\newpage
\appendix
\section{Covariance matrices}

\subsection{Number of clusters of each colour}\label{App:clusters}

For $p < (3 - \alpha)/4$:

\[ \Sigma_I^c =\frac{1}{ 4(\alpha - 3 + 4p)} \left(
\begin{array}{cccc}
\sigma_{1,1}^c & \sigma_{1,2}^c & \sigma_{1,3}^c & \sigma_{1,4}^c \\
\sigma_{2,1}^c & \sigma_{2,2}^c & \sigma_{2,3}^c & \sigma_{2,4}^c \\
\sigma_{3,1}^c & \sigma_{3,2}^c & \sigma_{3,3}^c & \sigma_{3,4}^c \\
\sigma_{4,1}^c & \sigma_{4,2}^c & \sigma_{4,3}^c & \sigma_{4,4}^c \\
\end{array}
\right)\]
where 
\begin{align*}
\sigma_{1,1}^c = \sigma_{2,2}^c &= -(\alpha+1) \left(\alpha^2+(4 p-2) \alpha+1\right)  \\
\sigma_{2,1}^c = \sigma_{1,2}^c &= (\alpha+1)\left(\alpha^2+(4 p-2) \alpha+1\right)  \\
\sigma_{3,3}^c = \sigma_{4,4}^c &= -(p-1)^2 (\alpha+4p+1) \\
\sigma_{3,4}^c = \sigma_{4,3}^c &= -(p-1) \left(4 p^2+\alpha p-3 p+\alpha+1\right) \\
\sigma_{3,1}^c = \sigma_{4,2}^c &=\sigma_{1,3}^c = \sigma_{2,4}^c = (\alpha+1) (\alpha+2 p-1)\\
\sigma_{3,2}^c = \sigma_{4,1}^c &=\sigma_{2,3}^c = \sigma_{1,4}^c =-(\alpha+1) (\alpha+2 p-1) 
\end{align*}

\noindent For $p = (3-\alpha)/4$:

\[\Sigma_{II}^c  = \frac{1}{4}\left(
\begin{array}{cccc}
\alpha + 1 & -\alpha - 1 & \frac{-\alpha - 1}{2} & \frac{\alpha+1}{2} \\
 -\alpha - 1 & \alpha + 1& \frac{\alpha +1}{2} & \frac{-\alpha - 1}{2}  \\
 \frac{-\alpha - 1}{2} & \frac{\alpha + 1}{2} & \frac{\alpha + 1}{4} & \frac{-\alpha - 1}{4}  \\
 \frac{\alpha + 1}{2} & \frac{-\alpha - 1}{2}  & \frac{-\alpha - 1}{4} & \frac{\alpha+1}{4} \\
\end{array}
\right)\]

\newpage

\subsection{Number of leaves of each colour}\label{App:leaves}

For $p< (3 - \alpha)/4$:

\[ \Sigma_I^l = \frac{\alpha + 1}{4(2+\alpha)^2(3+\alpha)(2p-3)(4p+\alpha-3)}
\left(
\begin{array}{cccc}
\sigma_{1,1}^l & \sigma_{1,2}^l & \sigma_{1,3}^l & \sigma_{1,4}^l\\
\sigma_{2,1}^l & \sigma_{2,2}^l & \sigma_{2,3}^l & \sigma_{2,4}^l \\
\sigma_{3,1}^l & \sigma_{3,2}^l & \sigma_{3,3}^l & \sigma_{3,4}^l \\
\sigma_{4,1}^l & \sigma_{4,2}^l & \sigma_{4,3}^l & \sigma_{4,4}^l \\
\end{array}
\right)\]
where 
\begin{align*}
\sigma_{1,1}^c = \sigma_{2,2}^c &=    \left(8 p^2-6 p-1\right) \alpha^3+\left(48 p^2-46 p+1\right) \alpha^2\\
&\hspace{20mm}+\left(112 p^2-158 p+49\right) \alpha+88 p^2-158 p+71 \\
\sigma_{2,1}^c = \sigma_{1,2}^c &= -\left(\left(8 p^2-6 p-1\right) \alpha^3+\left(48 p^2-50p+7\right) \alpha^2 \right.\\
&\hspace{20mm}\left. +\left(96 p^2-126 p+37\right) \alpha+72 p^2-122p+53\right)  \\
\sigma_{3,3}^c = \sigma_{4,4}^c &= -(\alpha+1) \left((2 p-3) \alpha^4+2 \left(4p^2-7\right) \alpha^3+4 \left(10 p^2-9 p-4\right) \alpha^2\right.\\
&\hspace{20mm}\left.+\left(56 p^2-60 p-4\right) \alpha+8 p^2+14 p-23\right)  \\
\sigma_{3,1}^c = \sigma_{4,2}^c &=\sigma_{1,3}^c = \sigma_{2,4}^c =(\alpha+1) \left((2 p-1) \alpha^3+\left(-8 p^2+22 p-9\right) \alpha^2\right.\\
&\hspace{20mm}\left. +\left(-32 p^2+62 p-21\right) \alpha-40 p^2+74 p-29\right) \\
\sigma_{3,2}^c = \sigma_{4,1}^c &= -(\alpha+1) \left((2 p-1) \alpha^3+\left(-8 p^2+22 p-9\right) \alpha^2\right.\\
&\hspace{20mm} \left.+\left(-32 p^2+66 p-27\right) \alpha-24 p^2+38 p-11\right)   \\
\sigma_{3,4}^c = \sigma_{4,3}^c &=(\alpha+1)^2 \left((2p-3) \alpha^3+\left(8 p^2-2 p-11\right) \alpha^2\right.\\
&\hspace{20mm} \left.+\left(32 p^2-34 p-5\right) \alpha+24 p^2-22 p-5\right)
\end{align*}

\noindent For $p = (3-\alpha)/4$:

\[\Sigma_{II}^l = \left(
\begin{array}{cccc}
\frac{(\alpha - 1)^2(\alpha+1)}{4(3+\alpha)^2} & -\frac{(\alpha - 1)^2(\alpha+1)}{4(3+\alpha)^2} & 
-\frac{(\alpha + 1)^2(\alpha-1)}{2(3+\alpha)^2} & \frac{(\alpha + 1)^2(\alpha-1)}{2(3+\alpha)^2}  \\
-\frac{(\alpha - 1)^2(\alpha+1)}{4(3+\alpha)^2} & \frac{(\alpha - 1)^2(\alpha+1)}{4(3+\alpha)^2} & 
\frac{(\alpha + 1)^2(\alpha-1)}{2(3+\alpha)^2} & -\frac{(\alpha + 1)^2(\alpha-1)}{2(3+\alpha)^2}  \\
-\frac{(\alpha - 1)^2(\alpha+1)}{4(3+\alpha)^2} & \frac{(\alpha - 1)^2(\alpha+1)}{4(3+\alpha)^2} & 
\frac{(1+\alpha)^3}{(3+\alpha)^2} & -\frac{(1+\alpha)^3}{(3+\alpha)^2} \\
\frac{(\alpha - 1)^2(\alpha+1)}{4(3+\alpha)^2} & -\frac{(\alpha - 1)^2(\alpha+1)}{4(3+\alpha)^2} & 
-\frac{(1+\alpha)^3}{(3+\alpha)^2} & \frac{(1+\alpha)^3}{(3+\alpha)^2} 
\end{array}
\right)\]

\noindent For $p=1/2$:
\[ \Sigma_I^l = \frac{\alpha + 1}{4(2+\alpha^2)(3 + \alpha)}\left(
\begin{array}{ccc}
7 + 6\alpha + \alpha^2 & -5-4\alpha - \alpha^2 & -2(1 + \alpha) \\
-5-4\alpha - \alpha^2 & 7 + 6\alpha + \alpha^2 & -2(1 + \alpha) \\
-2(1 + \alpha) & -2(1 + \alpha) & 4(1 + \alpha)
\end{array}\right)\]

\section{Proofs of Lemmas \ref{lem:superb} and \ref{lem:crit}}\label{App:LemmaProofs}

\begin{proof}[Proof of Lemma \ref{lem:superb}]
Using the recursion in \eqref{eq:recursion}, where $\phi(\delta) = d!/(d-\delta)!$ (recall \eqref{eq:phi}), we get the following partial differential equation:
\begin{align}
\frac{\partial}{\partial x}R(x,u) &= \sum_{n,k} n \frac{r_{n,k}}{n!}x^{n-1}u^k \nonumber\\
&= u\sum_{n,k} \sum_{\delta=0}^{d}\sum_{s=0}^\delta \binom{\delta}{s}\frac{\phi(\delta)}{\delta!}\sum_{n_1, \ldots , n_\delta}\sum_{k_1, \ldots, k_s}\prod_{i=1}^s \frac{pr_{n_i,k_i}x^{n_i}u^{k_i}}{n_i!} \prod_{j=s+1}^\delta \frac{(1-p)b_{n_j}x^{n_j}}{n_j!} \nonumber \\
&= u \sum_{\delta=0}^{d} \binom{d}{\delta} \sum_{s=0}^\delta \binom{\delta}{s} (pR(x,u))^s((1-p)B(x))^{\delta-s} \nonumber \\
&= u \sum_{\delta=0}^{d} \binom{d}{\delta}\left(pR(x,u) + (1-p)B(x)\right)^\delta \nonumber \\
&= u\left(1 + pR(x,u) + (1-p)B(x)\right)^{d} \nonumber \\
&= u\left(1 +pR(x,u) + (1-p)\left((1 - (d-1)x)^{-\frac{1}{d-1}} - 1\right)\right)^{d}. \label{eq:baryRgen}
\end{align}

We proceed by strong induction. Using the above differential equation, we see that 
\begin{equation}\label{eq:baryR1}
R_1'(x) =\left.\frac{\partial^2}{\partial u \partial x}R(x,u)\right|_{u = 1} = \frac{pd}{(1-(d-1)x)}R_1(x) + ((1-(d-1)x)^{\frac{-d}{d-1}}.
\end{equation}
Solving this differential equation with the initial condition $R_1(0) = 0$ yields
\begin{equation*}
R_1(x) = \frac{1}{pd-1}\left((1 - (d-1)x)^{\frac{-pd}{d-1} }- (1 - (d-1)x)^{-\frac{1}{d-1}}\right),
\end{equation*}
which is analytic on the desired cut plane.

For the inductive step, using the product rule at higher orders of partial differentiation produces 
\begin{align}
R_k'(x) &= \left.\frac{\partial^{k+1}}{\partial u^k \partial x}R(x,u)\right|_{u = 1} \nonumber\\
&= \left.\frac{\partial^k}{\partial u^k}u\left(1 +pR(x,u) + (1-p)\left((1 - (d-1)x)^{-\frac{1}{d-1}} - 1\right)\right)^{d}\right|_{u=1} \label{eq:Rk}\\
&=\left. \left( u \frac{\partial^k}{\partial u^k}f(R(x,u)) + k \frac{\partial^{k-1}}{\partial u^{k-1}}f(R(x,u))\right)\right|_{u=1}\nonumber
\end{align}
where $f(y) = \left(1 + py + (1-p)\left((1-(d-1)x)^{-\frac{1}{d-1}} - 1\right)\right)^d.$
Define
\[ f^{(m)}(y) := \frac{d^m}{dy^m}f(y).\]
Then 
\[f^{(m)}(y) = \frac{d!}{(d-m)!}p^m\left(1 + py + (1-p)\left((1-(d-1)x)^{-\frac{1}{d-1}} - 1\right)\right)^{d-m}\]
for $0 \leq m \leq d$, and $f^{(m)} = 0$ for $m > d$. In particular
\begin{equation}\label{eq:newf(R0)}
 f^{(m)}(R_0(x)) =  \frac{d!}{(d-m)!}p^m(1-(d-1)x)^{-\frac{d-m}{d-1}}
\end{equation}
for $0 \leq m \leq d$. 
By using Fa\'a di Bruno's formula for higher order chain rule, we see that 
\begin{align}
\left.\frac{\partial^k}{\partial u^k}f(R(x,u))\right|_{u=1} &= \sum_{j=1}^k f^{(j)}(R_0(x))B_{k,j}(R_1(x), \ldots, R_{k-j+1}(x)) \nonumber\\
&= pd(1-(d-1)x))^{-1}R_k(x) + h_k(x), \label{eq:fnew}
\end{align}
where 
\[ h_k(x) = \sum_{j=2}^k f^{(j)}(R_0(x))B_{k,j}(R_1(x), \ldots, R_{k-j+1}(x)).\]
Since analyticity is preserved under arithmetic operations as well as integration, the analyticity of $R_k(x)$ on the desired cut plane follows by the induction hypothesis. By using the forms of $R_j(x)$ in the induction hypothesis and \eqref{eq:newf(R0)}, then 
\begin{align*}
 h_k(x) &=  H_k(1-(d-1)x)^{\frac{-kpd +k - d}{d-1}} + O\left((1-(d-1)x)^{\frac{-kpd +k - d}{d-1}+ \varepsilon}    \right),
\end{align*}
where 
\[ H_k = \sum_{j=2}^{\min\{k,d\}} \frac{p^jd!}{(d-j)!}B_{k,j}(D_1, \ldots, D_{k-j+1}).\]
From \eqref{eq:fnew}, the induction hypothesis, and the assumption $p > 1/d$, we can also conclude that,
\begin{align*}
 \left.k\frac{\partial^{k-1}}{\partial u^{k-1}}f(R(x,u))\right|_{u=1} &= O\left((1 - (d-1)x))^{\frac{-kpd +pd +k - 1-d}{d-1}} \right)\\
& =O\left((1-(d-1)x)^{\frac{-kpd +k - d}{d-1}+ \varepsilon}    \right)
\end{align*}
for some $\varepsilon > 0$. By solving the differential equation
\begin{align*}
R_k'(x) &= \left. \left( u \frac{\partial^k}{\partial u^k}f(R(x,u)) + k \frac{\partial^{k-1}}{\partial u^{k-1}}f(R(x,u))\right)\right|_{u=1} \\
&=  pd(1-(d-1)x))^{-1}R_k(x)+ H_k(1-(d-1)x)^{\frac{-kpd +k - d}{d-1}}\\
&\hspace{20mm} + O\left((1-(d-1)x)^{\frac{-kpd +k - d}{d-1}+ \varepsilon} \right), 
\end{align*}
we get that 
\[ R_k(x) = \frac{H_k}{(k-1)(pd-1)}(1 - (d-1)x)^{\frac{-kpd + k -1}{d-1}} + O\left((1 - (d-1)x)^{\frac{-kpd + k -1}{d-1} + \varepsilon}\right). \]
Setting 
\[ D_k = \frac{H_k}{(k-1)(pd-1)}\]
concludes the proof of the lemma. 
\end{proof}

\begin{proof}[Proof of Lemma \ref{lem:crit}]
The derivation in \eqref{eq:baryRgen} applies here as well. Solving the differential equation in \eqref{eq:baryR1} with $p=1/d$ yields
\[ R_1(x) = \frac{-1}{d-1}(1-(d-1)x)^{\frac{-1}{d-1}}\ln(1-(d-1)x),\]
which is analytic on the desired cut plane. 

For the inductive step, the derivation \eqref{eq:Rk} holds here as well. We get that 
\[ f^{(m)}(R_0(x)) = \frac{d!}{d^m(d-m)!}(1-(d-1)x)^{-\frac{d-m}{d-1}}.\]
By following the same steps as the proof of Lemma \ref{lem:superb}, we see that 
\begin{align*}
\left.\frac{\partial^k}{\partial u^k}f(R(x,u))\right|_{u=1} &= \sum_{j=1}^k f^{(j)}(R_0(x))B_{k,j}(R_1(x), \ldots, R_{k-j+1}(x)) \\
&= (1-(d-1)x)^{-1}R_k(x) + l_k(x),
\end{align*}
where 
\[l_k(x)= \sum_{j=2}^k f^{(j)}(R_0(x))B_{k,j}(R_1(x), \ldots, R_{k-j+1}(x)).\]
As before, analyticity is preserved. By using the induction hypothesis and the simplification 
\[ B_{k,2}(x_1, \ldots, x_{k-1}) = \frac{1}{2}\sum_{i=1}^{k-1}\binom{k}{i}x_ix_{k-i},\] then 
\begin{align*}
 l_k(x) &= L_k(1-(d-1)x)^{\frac{-d}{d-1}}\ln^{2k-2}(1-(d-1)x) \\
&\hspace{20mm} + O((1-(d-1)x)^{\frac{-d}{d-1}}\ln^{2k-3}(1-(d-1)x)),
\end{align*}
where 
\[ L_k =\frac{ d-1}{2d}\sum_{j=1}^{k-1}\binom{k}{j}E_jE_{k-j}.\]
We can also conclude that 
\[  \left.k\frac{\partial^{k-1}}{\partial u^{k-1}}f(R(x,u))\right|_{u=1} = O\left((1-(d-1)x)^{\frac{-d}{d-1}}\ln^{2k-3}(1-(d-1)x)\right).\]
Solving the differential equation 
\begin{align*}
 R_k'(x) &= (1-(d-1)x)^{-1}R_k(x) + L_k(1-(d-1)x)^{\frac{-d}{d-1}}\ln^{2k-2}(1-(d-1)x) \\
&\hspace{20mm} + O\left((1-(d-1)x)^{\frac{-d}{d-1}}\ln^{2k-3}(1-(d-1)x)\right),
\end{align*}
we get that 
\begin{align*}
 R_k(x) &= \frac{-L_k}{(2k-1)(d-1)}(1-(d-1)x)^{\frac{-1}{d-1}}\ln^{2k-1}(1-(d-1)x) \\
&\hspace{20mm} + O\left((1-(d-1)x)^{\frac{-1}{d-1}}\ln^{2k-2}(1-(d-1)x)\right).
\end{align*}
Setting 
\[ E_k = \frac{L_k}{(2k-1)(d-1)}\]
concludes the proof of the lemma. 
\end{proof}

%--------------------------------------------%
%---------- BIBLIOGRAPHY ----------%
%--------------------------------------------%


\begin{thebibliography}{abcd:99}

\bibitem{ADLV:20} L. Addario-Berry, L. Devroye, G. Lugosi, and V. Velona, Broadcasting on random recursive trees, preprint available online at {\tt arXiv:2006.11787} (accessed January 2021).

\bibitem{BAAL:99} A. L. Barab\'asi and R. Albert, Emergence of scaling in random networks, {\em Science} {\bf 286.5439} (1999), 509--512.

\bibitem{BAUR:20} E. Baur, On a class of random walks with reinforced memory, {\em J. Stat. Phys.} {\bf 181} (2020), 772--802. 

\bibitem{BABE:15} E. Baur and J. Bertoin, The fragmentation process of an infinite recursive tree and Ornstein-Uhlenbeck type processes, {\em Electron. J. Probab.} {\bf 20} (2015), 1--20. 

\bibitem{BABE:16} E. Baur and J. Bertoin, Elephant random walks and their connecton to P\'olya-type urns, {\em Phys. Rev. E} {\bf 94} (2016), 052134.

\bibitem{BEFS:92} F. Bergeron, P. Flajolet, and B. Salvy, Varieties of increasing trees, in {\em Colloquium on trees in algebra and programming}, 24--48, Springer, 1992.

\bibitem{BRTS:01} B. Bollob\'as, O. Riordan, J. Spencer, and G. Tusn\'ady, The degree sequence of a scale-free random graph process, {\em Random Structures Algorithms} {\bf 18} (2001), 279--290.

\bibitem{BUSI:18} S. Businger, The shark random swim, {\em J. Stat. Phys. } {\bf 172} (2018), 701--717. 

\bibitem{COMT:74} L. Comtet, {\em Advanced combinatorics. The art of finite and infinite expansions}, D.~Reidel Publishing Co.,  1974.

\bibitem{DRMO:09} M. Drmota, {\em Random trees: an interplay between combinatorics and probability}, Springer Science and Business Media, 2009.

\bibitem{DWAS:69} M. Dwass, The total progeny in a branching process and a related random walk. {\em J. Appl. Probab.} {\bf 6} (1969), 682--685.

\bibitem{EKPS:00} W. Evans, C. Kenyon, Y. Peres, and L. J. Schulman, Broadcasting on trees and the Ising model, {\em Ann. Appl. Probab.} {\bf 10} (2000), 410--433.

\bibitem{FLSE:09} P. Flajolet and R. Sedgewick, {\em Analytic Combinatorics}, Cambridge University Press, 2009.

\bibitem{GULM:18} L. Gulikers, M. Lelarge, and L. Massouli\'e, An impossibility result for reconstruction in the degree-corrected stochastic block model, {\em Ann. Appl. Probab.} {\bf 28} (2018), 3002--3027. 

\bibitem{GUT:13} A. Gut, {\em Probability: A Graduate Course} (2nd ed.), Springer Texts in Statistics, 2013.

\bibitem{HEHO:17} M. Heydenreich and R. Van der Hofstad, {\em Progress in high-dimensional percolation and random graphs}, Springer, 2017. 

\bibitem{HOJS:17} C. Holmgren, S. Janson, and M. \v{S}ileikis, Multivariate normal limit laws for the numbers of fringe subtrees in $m$-ary search trees and preferential attachment trees, {\em Electron. J. Combin.} {\bf 24} (2017), Paper 2.51, 49pp.

\bibitem{JANS:04} S. Janson, Functional limit theorems for multitype branching processes and generalized P\'olya urns, {\em Stochastic Process. Appl.} {\bf 110} (2004), 177--245. 

\bibitem{JANS:05} S. Janson, Asymptotic degree distributions in random recursive trees, {\em Random Structures Algorithms} {\bf 26} (2005), 69--83. 

\bibitem{JUNG:31} R. Jungen, Sur les séries de Taylor n'ayant que des singularités algébrico-logarithmiques sur leur cercle de convergence, {\em Comment. Math. Helv.} {\bf 3} (1931), 266--306.

\bibitem{KURS:16} R. Kürsten, Random recursive trees and the elephant random walk, {\em Phys. Rev. E} {\bf 93} (2016), 032111. 

\bibitem{LYPE:16} R. Lyons and Y. Peres, {\em Probability on Trees and Networks}, Cambridge University Press, 2016. 

\bibitem{MASM:92} H. M. Mahmoud and R. T. Smythe, Asymptotic joint normality for outdegrees of nodes in random recursive trees, {\em Random Structures Algorithms} {\bf 3} (1992), 255--266. 

\bibitem{MILE:05} G. Mittag-Leffler, Sur la représentation analytique d'une branche uniforme d'une fonction monogène (cinquième note), {\em Acta Math. } {\bf 29} (1905), 101--181. 

\bibitem{MOHL:15} M. Möhle, The Mittag-Leffler process and a scaling limit for the block counting process of the Bolthausen-Sznitman coalescent, {\em ALEA Lat. Am. J. Probab. Math. Stat.} {\bf 12} (2015), 35--53. 

\bibitem{MOSS:04} E. Mossel, Survey - information flow on trees, {\em DIMACS Ser. Discrete Math. Theoret. Comput. Sci.} {\bf 63} (2004), 155--170. 

\bibitem{MONS:16} E. Mossel, J. Neeman, and A. Sky, Belief propagation, robust reconstruction and optimal recovery of block models, {\em Ann. Appl. Probab.} {\bf 26} (2016), 2211--2256.

\bibitem{NAHE:82} D. Najock and C. C. Heyde, On the number of terminal vertices in certain random trees with an application to stemma construction in philology, {\em J. Appl. Probab.} {\bf 19} (1982), 675--680. 

\bibitem{OTTE:49} R. Otter, The multiplicative process, {\em Ann. Math. Statistics} {\bf 20} (1949), 206--224. 

\bibitem{PITT:94} B. Pittel, Note on the heights of random recursive trees and random $m$-ary trees, {\em Random Structures Algorithms} {\bf 5} (1994), 337--347.

\bibitem{SZYM:87} J. Szyma\'nski, On a nonuniform random recursive tree, in {\em North-Holland Mathematics Studies} (Vol. 144), 297--306, Elsevier, 1987. 

\bibitem{TAMY:67} M. Tapia and B. Myers, Generation of concave node-weighted trees, {\em IEEE Trans. Circuit Theory} {\bf 14} (1967), 229--230.


\end{thebibliography}
\end{document}